\pdfoutput=1
\documentclass[a4paper]{amsart}

\usepackage{amsmath}
\usepackage{amsfonts}
\usepackage{amssymb}
\usepackage[all]{xypic}
\usepackage{hyperref}
\usepackage{amsmath,amsfonts,amsthm,epsfig,amsthm}
\usepackage{graphicx,epsfig,color}

\input cyracc.def
\font \tencyr=wncyr10
\newfam\cyrfam
\font\tencyr=wncyr10
\font\sevencyr=wncyr7
\font\fivecyr=wncyr5

\textfont\cyrfam=\tencyr \scriptfont\cyrfam=\sevencyr
\scriptscriptfont\cyrfam=\fivecyr

\let\kappa\varkappa

\newtheorem{proposition}{Proposition}
\newtheorem{definition}{Definition}
\newtheorem{lemma}{Lemma}
\newtheorem{theorem}{Theorem}
\newtheorem{corollary}{Corollary}
\newtheorem{remark}{Remark}
\newtheorem{example}{Example}
\newtheorem{fact}{Fact}

\newcommand{\N}{\mathbb{N}}
\newcommand{\R}{\mathbb{R}}

\newcommand{\C}{\mathcal{C}}\newcommand{\D}{\mathcal{D}}
\newcommand{\A}{\mathcal{A}}
\newcommand{\E}{\mathcal{E}}

\renewcommand{\O}{\mathsf{O}}

\newcommand{\I}{\mathcal{I}}
\newcommand{\II}{\check{\I}}
\newcommand{\J}{\check{J}}
\newcommand{\III}{\I^{(\infty)}}

\newcommand{\Th}{^\textrm{th}}
\newcommand{\St}{^\textrm{st}}

\renewcommand{\S}{\boldsymbol{S}}

\newcommand{\ddd}{\boldsymbol{d}}\newcommand{\p}{\boldsymbol{p}}\newcommand{\n}{\boldsymbol{n}}

\newcommand{\DD}{\boldsymbol{D}} \newcommand{\CC}{\boldsymbol{C}}

\newcommand{\Hom}{\mathrm{Hom\,}}

\newcommand{\REF}[1]{{\normalfont (\ref{#1})}}

\newcommand{\Span}[1]{\langle\, #1\, \rangle}

\newcommand{\GL}{\mathrm{GL\,}}

\newcommand{\Gr}{\mathrm{Gr\,}}

\newcommand{\df}{\stackrel{\mathrm{def}}{=}}
\newcommand{\loc}{\stackrel{\mathrm{loc}}{=}}

\newcommand{\virg}[1]{``#1''}
\renewcommand{\P}{\mathbb{P}}

\newcommand{\rank}{\mathrm{rank\,}}

\setcounter{MaxMatrixCols}{30}

 \title{The geometry of the  space of Cauchy data of nonlinear PDEs}

\author{G.~Moreno}  

       \address{Mathematical Institute in Opava\\
Silesian University in Opava\\
Na Rybnicku 626/1, 746 01 Opava, Czech Republic.}
  \email[G.~Moreno]{Giovanni.Moreno@math.slu.cz}


          \begin{document}
          
          \begin{abstract}
          
 First--order jet bundles can be put at the foundations of the modern geometric approach to nonlinear PDEs, since higher--order jet bundles can be seen as constrained iterated jet bundles. The definition of first--order jet bundles can be given in many equivalent ways---for instance, by means of Grassmann bundles. In this paper we generalize it by   means of flag bundles, and develop the corresponding theory for higher--oder and infinite--order jet bundles. We show that this is a natural geometric framework for the space of Cauchy data for nonlinear PDEs. As an example, we derive a general notion of transversality conditions in the Calculus of Variations.

         \end{abstract}


\maketitle

\setcounter{tocdepth}{1}

\tableofcontents

%
%
%

\section{Introduction}
Algebraic Geometry entered  adulthood when its intellectual energies, traditionally committed to find  concrete solutions of algebraic equations (i.e., \emph{points} of an arithmetic space), began to  wonder about the structure of the equations themselves (i.e.,  \emph{ideals} in rings over arithmetic fields).  The   theory  of nonlinear PDEs   underwent a similar development,  though highly ramified and  dependent on the  intermittent and diversified impulses coming from natural sciences, and   it is still inappropriate to speak about \virg{the} theory of nonlinear PDEs, for none of the proposed frameworks was  enthusiastically embraced by the mainstream. The reader may find relevant  historical information, as well as an exhaustive list of references in the  2010 review  \cite{KrasVer}. \par
This paper is committed to the perspective that (smooth) solutions of a (regular enough) system of nonlinear PDEs in $n$ independent variables (henceforth called \emph{equation}, for short) are to be interpreted as the maximal integral submanifolds (henceforth called \emph{leaves}) of an $n$--dimensional   involutive distribution on a pro--finite manifold, and adheres to the philosophy that relevant invariants of the equation are encoded by an appropriate    cohomological theory, possibly twisted with nonlocal coefficients, called \emph{characteristic} or \emph{leafwise} cohomology (of the equation). In such a framework, the space of leaves itself, which is (as a rule) quite bad--behaved, can be put aside, and  the focus   diverted to the characteristic cohomologies of the foliation. We shall use the word \virg{secondary} (following \cite{Vin2001}) as a synonimous of \virg{leafwise} in the pro--finite context\footnote{We keep the distinction between foliations of finite and pro--finite manifolds, due to the failure of the key Frobenious theorem on the latter.} and we adopt the same framework and terminology which can be found, e.g.,  in the introductory section of \cite{Luca} (for example, a \emph{secondary point} is just  a leaf, a \emph{secondary manifold} is the leaf space of a foliation over a pro--finite manifold, a \emph{secondary map} is a map preserving leaves, etc). 
%
%
\begin{eqnarray}
\epsfig{file=SottovarietaDimUno.pdf,width=40mm}&&\epsfig{file=GettiDimUno.pdf,width=40mm}\nonumber\\
\textrm{$n$--dimensional submanifolds of $E$}&  {\Leftrightarrow}  & \textrm{Leaves of  $J^\infty(E,n)$}\label{eqPuntiSecondati}
\end{eqnarray}
So, the \virg{solution space} of an equation can be seen as a secondary submanifold of  the \emph{empty equation}\footnote{This is the reason why, sometimes, a leaf of $J^\infty(E,n)$ is also called a \virg{solution}.} $J^\infty(E,n)$, since the graphs of the solutions of the former correspond to the leaves  of the latter \REF{eqPuntiSecondati}. Nonetheless, it is quite evident  that the same equation dictates restrictions also on non--maximal integral submanifolds: indeed, by definition, a non--maximal integral submanifold is contained into a leaf, and among leaves there are the solutions.  In this paper we propose a very natural geometric framework where $(n-1)$--dimensional integral submanifolds (henceforth called \emph{small leaves}) coexist with the maximal ones, study its structure, and reveal some interesting properties of its characteristic cohomology.    Small leaves  are nothing but the geometric counterparts of infinite--order Cauchy data (Section \ref{secCauchDat}), taking prominent roles in the theory of nonlinear PDEs, calculus of variations, field theory, etc., and, in our approach, they can be treated on the same footing as solutions. To this end, it is compulsory  to \virg{nest} one jet space into another \REF{eqGettiInnestati}, much as, in another context,  flag manifolds are constructed out of nested Grassmmannians.
%
 \begin{eqnarray}
\epsfig{file=SottovarietaSuFogliaDimUno.pdf,width=40mm}&&\epsfig{file=GettiSuFogliaDimUno.pdf,width=40mm}\nonumber\\
\textrm{$(n-1)$--dim. sub. of a leaf  $L\subset J^\infty(E,n)$}&  {\Leftrightarrow}  & \textrm{Leaves of  $J^\infty(L,n-1)$}\label{eqGettiInnestati}
\end{eqnarray}
Once Cauchy data and solutions of a PDE  are framed in the same secondary context, 
 it becomes natural to perform algebro--geometric manipulations which mix   secondary notions of horizontal degree $n$ with   ones of horizontal degree $n-1$. For example, a boundary variational   integral (i.e., a secondary function of horizontal degree $n-1$)   can be combined with a variational integral (i.e., a secondary function of horizontal degree $n$), and from their interaction it arises, in a surprisingly straightforward way,   a  general notion of transversality conditions (Section \ref{SecEsempioVariazionale}). \par
 %
\subsubsection*{Structure of the paper}
In Section \ref{secFunctJey} we define special subsets of the jet bundle, needed to associate with a map between manifolds a map between the corresponding jet bundles. This will allow to speak of \virg{projectable} and, in particular, of \virg{horizontal} jets later on, and hence to be able to deal with  the jet bundles over pro--finite manifolds.\par
Section \ref{secFlagGr} contains the well--known material about Grassmannian and flag manifolds, with the focus on the universal sequence associated to a Grassmannian and the canonical bundles over  flag manifolds.  These   notions are at the heart of the definition of $1\St$ order jet bundles and flag jet bundles, respectively.\par
In Section \ref{secInvPlanes} we introduce a class of equations (given, in coordinates, by \REF{eqFunzLocDistrib1} and \REF{eqFunzLocDistrib2}) which, for $n$ independent variables, constitute the key ingredient to define higher--order jet bundles out of lower--order ones, and, for $n-1$ independent variables, lead straightforwardly to the notion of a jet of a Cauchy datum. These are but  examples of   equations of involutive planes of a distribution.\par
Inheritance of involutivity allows to mimic the definition of a flag manifold and to introduce, in a similar fashion, higher--order flag jet bundles $J^k(E,n,n-1)$. In Section \ref{secFlagJet}, besides the conceptual definition,   two natural coordinate systems are proposed, stemming from the fundamental equation  \REF{eqEquazioniCoordinateDatiFinitiEQ}, which will be useful later on for   the description of the    canonical bundles associated with  $J^k(E,n,n-1)$.   \par
The notion of the $1\St$ jet of an $(n-1)$--dimensional involutive plane is \virg{almost} the same as the notion of a flag jet, were, as usual in the theory of jet bundles, \virg{almost} means that the desired property holds correctly only on the inverse limit, i.e., for infinite jets. Section   \ref{secFlagJetInvpla} clarifies this relationship through the fundamental diagram \REF{eqDiagrammaLibroAperto}.  The advantage with this new point of view is that involutive planes, unlike flag jets, are naturally understood as solutions of an equation,  which we denote by $\II_{n-1}(\C)$.\par
Having introduced flag jets was not a mere exercise, since they carry a natural normal bundle, which is essential to discover   the structure of the space of Cauchy data. The idea, sketched  in Section  \ref{remNormBundleUno} by introducing the bundle of infinite--order normal directions, is that the space of Cauchy data can be seen as the space of sections of an (infinite--dimensional) bundle over a fixed Cauchy value, whose fiber coordinates capture the ideas of \virg{purely normal derivatives}.\par
In Section \ref{secCauchDat}, after having given  the formal definitions of finite and infinite--order Cauchy data, it is shown how a higher--order Cauchy datum can be constructed \virg{over} a lower--order one, by using a section of a suitable normal bundle, where \virg{over} means \virg{projecting onto}. This is the next step towards the clarification of the structure of the space of infinite--order Cauchy data.\par
The central result of the paper, Theorem \ref{thStrutturale}, can be found in the last section of theoretical character,    Section \ref{secFinale}. It implies that the pro--finite manifold $\III_{n-1}(\C)$ gives rise to three distinct secondary manifolds, one whose points are the Cauchy data, another whose points are the solutions, and the last whose points are $(n-1)$--dimensional submanifolds of solutions, thus providing a natural  common framework for these three apparently heterogeneous entities. Most importantly, it shows that the (secondary) fibers of the naturally defined maps from one secondary manifold to the other, are, in   turn, very simple secondary manifolds, namely \emph{empty equations}. Handy coordinates, indispensable for applications, are also introduced here. The final comments on Theorem \ref{thStrutturale} are collected in Section \ref{secConclusiva}, together with the  envisaged consequences and applications.\par
In the last Section \ref{SecEsempioVariazionale}, we present a variational problem simultaneously  involving Lagrangians with  $n$ and $n-1$ independent variables, and we test it on the toy model given by a simple 1--dimensional variational problem with constrained endpoints.

\subsubsection*{Notations and conventions}

Even if  we did our best to avoid proprietary notations,   we must warn the reader about  a somewhat extreme \virg{slang} we are going to use throughout this paper, in line with the most recent works on the subject (see, e.g., \cite{Luca}).  \virg{$P$ is an $M$--module} means that $P$ is the module of sections of  a bundle $\pi$ over $M$. Then the meaning of expressions like    $P\otimes_M Q$ and $\Hom_M(P,Q)$ is clear. We use both $\pi_x$ and $E_x$ as synonymous of $\pi^{-1}(x)$, where $\pi$ is a fibration of $E$. By a \virg{plane in $V$} we mean just a vector subspace of $V$; similarly, a \virg{plane in $E$}  is a subspace of some tangent space to the manifold $E$. Term \virg{space} without modifier (like the one appearing in the title of this paper) always means \virg{secondary manifold}.  \par
We use the term  \virg{leaf} for an $n$--dimensional integral submanifold of the Cartan distribution on $J^\infty(E,n)$, and we denote it by $L$. A codimension--one submanifold of a leaf is called a \virg{small leaf}, and denoted by $\Sigma$. The projection of any object $\mathcal{O}$ associated with $J^k$  (with $k=0,\ldots,\infty$) on a lower oder jet $J^l$ is denoted by $\mathcal{O}_l$; for instance, $L_0$ is the submanifold of $E$ which corresponds to the leaf $L$ (but, in this case, we  even skip the index \virg{$0$}). We allow $l$ to take the value -1, assuming that $E_{-1}$ is an arbitrary choice of the manifold of independent variables (in which case   we are considering a so--called \emph{affine chart} in $J^\infty(E,n)$), and we write \virg{$\,\loc\,$} when an equality holds in coordinates or affine charts, like, e.g., $J^\infty(E,n)\loc J^\infty(\pi)$. If $\theta\in J^\infty$ is the jet of a section in some point, then $\theta_{-1}$ is precisely that point. We use the word \virg{over} to indicate that one thing projects over another. \par
$TE$ denotes the tangent bundle of $E$, and $f_\ast$ denotes the differential of $f:E\to E'$. If $E$ is fibered, $VE$ is the vertical tangent bundle ($VJ^k$ means \virg{vertical with respect to $\pi_{k,k-1}$}).
The $\R$--dual of a vector space is denoted by $V^\vee$, and the annihilator of a subspace $W$ by $W^\dag$. The same symbol  {$P^\dag$} is used, in different contexts,  for the adjoint module to $P$.
 We prefer to say that \virg{$L$ is a leaf of $\E^{(\infty)}$}, rather than  \virg{$L_k$ is a solution of $\E\subseteq J^k(E,n)$}.\par
The $R$--distribution on $J^k$ (see \cite{Sym}) is called $R^k$,   Cartan distribution on $J^k$ is denoted by $C^k$, while that  on $J^\infty$ simply $\C$.\par

Modifier \virg{local} in front of \virg{section} or \virg{coordinates} is skipped as a rule. All PDEs are assumed to be formally integrable.\par
 
%
Greek and latin indexes range in disjoint sets: this means that the sets of coefficients $\{\omega_a\}$ and $\{\omega_\alpha\}$ cannot have any element in common. Latin indexes correspond to independent variables, and greek ones to dependent variables.
Derivations are denoted by a semicolon: $\omega_{a;\alpha}$ means $\frac{\partial \omega_a}{\partial u^{\alpha}}$.
For iterated jet spaces we encapsulate into parentheses the inner jet variables, before taking the outer derivatives, like in 
$(u_a^\alpha)_b^\beta$, or $(u)_a$. Concerning multi--indices for partial derivatives, uppercase latin letters will always denote elements of the abelian group $\N_0^{n-1}$, even if we use multiplicative notation for its operation and the symbol $\O$   for its zero    (as in the  \virg{monoidal notation}, see \cite{Luca}); the pair $(A,l)$, where $l\in\N_0$, is an element of $\N_0^{n}$, namely the one having the first $n-1$ entries in common with $A$, and the last one equal to $l$ (hence, $|(A,l)|=|A|+l$). For example,
\begin{equation}
\frac{\partial^{|A|+l}}{\partial x^{A,l}}=\frac{\partial^{i_1+\cdots+i_{n-1}+l}}{\partial (x^1)^{i_1}\cdots\partial (x^{n-1})^{i_{n-1}}\partial (x^n)^l},\quad A=(i_1,\ldots,i_{n-1})\in\N_0^{n-1},l\in\N_0.
\end{equation}

Number $n$ is fixed throughout this paper,    index $\alpha$  is always assumed to be ranging in $1,\ldots,m$, and  index $a$ in $1,\ldots,n-1$. Symbol $Aa$ represents the multi--index $A$ whose $a^\textrm{th}$ entry has been increased by one.\par
All constructions are coordinate--free. Nonetheless, many concepts look more familiar when written down in coordinates, so the reader will find several Remarks labeled \virg{Coordinates} after any intrinsic definition.\par
We have chosen the notation $J^k(E,n)$ for the     space of $k$--jets of $n$--dimensional submanifolds of $E$, just to stress the analogy with the linear case, when   one works with Grassmann manifolds $\Gr(V,n)$   instead. Alternatively, one may regard $J^k(E,n)$ as a sub--quotient of $J^k(\R^n,E)$, the space of $k$--jets of smooth maps from $\R^n$ to $E$  \emph{\`a la} P. Michor \cite{MR583436}, namely the space of $k$--jets of embeddings, factorized by the group of diffeomorphisms of   $\R^n$. Or, in a  more \virg{mechanical} perspective, $J^k(E,n)$ may be seen as the  space of $k$--jets of regular, parameter--free $n$--velocities in $E$ \emph{\`a la} D. Krupka \cite{MR0394755}. No matter which point of view is adopted, the definition is the same, viz.,
\begin{equation*}
J^k(E,n):=\coprod_{y\in E}J^k_y(E,n),\end{equation*}\begin{equation*} J_y^k(E,n):=\frac{\{L \mid L\subseteq E\textrm{ is $n$--dimensional submanifold and }L\ni y\}}{\sim_y^k},
\end{equation*}
where $\sim_y^k$ is the equivalence relation
\begin{equation*}
L_1\sim_y^k L_2\Leftrightarrow L_1\textrm{ is tangent to }L_2\textrm{ at $y$ with order }k.
\end{equation*}
The equivalence class of $L$ w.r.t. $\sim_y^k$ is denoted by $[L]_y^k$. If $E=\{(x^i,u^\alpha)\}$, and $L=\textrm{graph\,}(s)$, where $s=(s^1,\ldots,s^m)$, with $s^\alpha=s^\alpha(x^1,\ldots, x^n)$, then the jet coordinate $u_{i_1\cdots   i_n}^\alpha$ is defined as
\begin{equation*}
u_{i_1\cdots  i_n}^\alpha([L]_{s({\boldsymbol x})}^k):=\frac{\partial^{i_1+\cdots+i_{n } } s}{\partial (x^1)^{i_1}\cdots\partial (x^{n})^{i_{n}} }({\boldsymbol x}), \quad {\boldsymbol x}=(x^1,\ldots,x^n).
\end{equation*}

It is worth recalling  that the $R$--distribution is nothing but the jet--theoretic incarnation of the tautological (or universal) bundle associated to a Grassmann manifold; it associates with a point $\theta\in J^k$ the $n$--dimensional subspace of $T_{\theta_{k-1}}J^{k-1}$  spanned by
\begin{equation}\label{eqPerReferee2}
\left.\partial_l\right|_{\theta_{k-1}}+\sum_{i_1+i_2+\cdots+i_n\leq k-1}u_{i_1\cdots i_l+1\cdots i_n}^\alpha(\theta)\left.\partial_{u^\alpha_{i_1i_2\cdots i_n}}\right|_{\theta_{k-1}}, \quad l=1,\ldots,n,
\end{equation}
where $\partial_l$ is a short for $\partial_{x^l}$. 
The $R$--distribution \virg{generates} the Cartan distribution, in the sense that $\C^k_\theta=\pi_{k,k-1\ast}^{-1}(R^k_\theta)$; as such, besides the $n$ vectors \REF{eqPerReferee2}, it takes also all the $\pi_{k,k-1}$--vertical vectors to span it.

\section{Jet maps}\label{secFunctJey}
Obvious Definition \ref{defJetMap} below is given just   to simplify   subsequent  constructions. 
Let $f:E\to E'$ be a smooth map. 
\begin{proposition}
 The subset $\J^k(E,n)\df\{\theta\in J^k(E,n)\mid R_{\theta_1}\cap\ker f_\ast =0\} $ is an open   sub--bundle.  Moreover, the natural map $f_\ast: \J^k(E,n)\to J^k(E',n)$ is smooth.
\end{proposition}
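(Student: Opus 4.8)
The plan is to establish the three claims in turn---openness, the sub-bundle structure, and smoothness of $f_\ast$---after first reducing to the first-order case. The key remark is that the defining condition $R_{\theta_1}\cap\ker f_\ast=0$ depends on $\theta$ only through $\theta_1=\pi_{k,1}(\theta)$, since $R_{\theta_1}$ is attached to $\theta_1$ and $\ker f_\ast$ is taken at $y=\theta_0$. Hence $\J^k(E,n)=\pi_{k,1}^{-1}\bigl(\J^1(E,n)\bigr)$, where $\J^1(E,n)$ is the corresponding subset of $J^1(E,n)$. Because $\pi_{k,1}\colon J^k(E,n)\to J^1(E,n)$ is a tower of affine bundles, hence a locally trivial open map, $\J^k(E,n)$ is open as soon as $\J^1(E,n)$ is, and it automatically acquires the structure of an open sub-bundle over $\J^1(E,n)$. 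This reduces both the openness and the sub-bundle assertions to the statement that $\J^1(E,n)$ is open.

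For $k=1$ I would invoke the Grassmann-bundle picture recalled above: $J^1(E,n)=\Gr(TE,n)$ over $E$, with $R_{\theta_1}\subseteq T_yE$ the tautological $n$-plane carried by $\theta_1$. The condition $R_{\theta_1}\cap\ker f_\ast=0$ is equivalent to the injectivity of $f_\ast|_{R_{\theta_1}}\colon R_{\theta_1}\to T_{f(y)}E'$, i.e. to $\rank\bigl(f_\ast|_{R_{\theta_1}}\bigr)=n$. In any local trivialization $f_\ast|_{R_{\theta_1}}$ is a matrix depending smoothly on $(y,\theta_1)$, and the locus where a continuous family of linear maps attains maximal rank is open by lower semicontinuity of the rank; this proves that $\J^1(E,n)$ is open.

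It remains to define $f_\ast$ on $\J^k(E,n)$ and check it is smooth. I would set $f_\ast\bigl([L]^k_y\bigr):=[f(L)]^k_{f(y)}$. Since $T_yL=R_{\theta_1}$, the transversality condition says precisely that $f$ restricts to an immersion of $L$ at $y$, so $f(L)$ is an $n$-dimensional submanifold of $E'$ near $f(y)$ and the right-hand side is meaningful. For well-definedness one observes that two representatives $L_1\sim^k_yL_2$ admit parametrizations $\phi_1,\phi_2\colon\R^n\to E$ agreeing to order $k$ at the origin; by the chain rule $f\circ\phi_1$ and $f\circ\phi_2$ then agree to order $k$ as well, so $f(L_1)\sim^k_{f(y)}f(L_2)$ and the class $[f(L)]^k_{f(y)}$ depends only on $[L]^k_y$.

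Smoothness is the part I expect to require the most care. Presenting $L$ as a graph $u^\alpha=s^\alpha(x)$ and choosing coordinates on $E'$ adapted to the image plane $f_\ast(R_{\theta_1})$, the transversality condition makes the Jacobian of the map $x\mapsto\bar x$ (reading off the $\bar x$-components of $f\circ(\id,s)$) invertible, so by the inverse function theorem $f(L)$ is again a graph; differentiating and composing, the jet coordinates of $[f(L)]^k_{f(y)}$ come out as explicit expressions---built from the partial derivatives of $f$ and of the local inverse---in the jet coordinates of $\theta$, and these are smooth exactly where that Jacobian is invertible, that is, on $\J^k(E,n)$. The main obstacle is to organise this chain-rule/implicit-function computation so that one sees simultaneously that the formulas are independent of the chosen graph and adapted coordinates and that they are smooth precisely on the transversality locus; the reduction $\J^k=\pi_{k,1}^{-1}(\J^1)$ and the Grassmann description are what keep this bookkeeping manageable.
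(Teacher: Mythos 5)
Your openness argument is in substance the same as the paper's: the paper observes that $\theta\in\J^k(E,n)$ exactly when the $n$ vectors $f_\ast\bigl(\left.\partial_i\right|_{\theta_0}+u_i^\alpha(\theta)\left.\partial_{u^\alpha}\right|_{\theta_0}\bigr)$ are linearly independent, packages this as the nonvanishing of a smoothly varying $n$--multivector $\boldsymbol{Y}(\theta)$, and concludes because nonvanishing is an open condition --- which is precisely your ``maximal rank is open by lower semicontinuity'' phrased with wedge products. Your preliminary reduction $\J^k(E,n)=\pi_{k,1}^{-1}\bigl(\J^1(E,n)\bigr)$ is a clean organizational step the paper does not take explicitly, though it is implicit there since $\boldsymbol{Y}$ depends only on the first--order coordinates $u_i^\alpha(\theta)$. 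Where you genuinely go beyond the paper is on the second claim: the paper's proof effectively stops at openness and absorbs the smoothness of the induced jet map into ``the result follows,'' whereas you actually define $f_\ast\bigl([L]^k_y\bigr)=[f(L)]^k_{f(y)}$, check via the chain rule that the class is well defined, and sketch the inverse--function--theorem/graph computation showing that the target jet coordinates are smooth functions of the source ones exactly on the transversality locus. That extra work is correct and is what a complete proof of the second sentence of the proposition requires; the cost is the coordinate bookkeeping you flag at the end, which the paper avoids only by not carrying it out.
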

\begin{proof}
 Notice  that $R_{\theta_1}\cap\ker f_\ast$ is the kernel  of the restriction
  \begin{equation}\label{eqDelReferee}
 f_\ast|_{R_{\theta_1}}: R_{\theta_1}\longrightarrow T_{f(\theta_0)}E'
\end{equation}
 of $f_\ast$ to $R_{\theta_1}\subseteq T_{\theta_0}E$. In turn, $R_{\theta_1}=\Span{\left.\partial_i\right|_{\theta_0}+u_i^\alpha(\theta)\left.\partial_{u^\alpha}\right|_{\theta_0}\mid i=1,\ldots,n}$, where $\{x^i,u^\alpha\}$ are local coordinates on $E$, and $\partial_i$ is a short for $\partial_{x^i}$. Hence, $\theta\in J^k(E,n)$ belongs to $\J^k(E,n)$ if and only if \REF{eqDelReferee} is injective, i.e., if and only if the $n$ tangent vectors $f_\ast\left(\left.\partial_i\right|_{\theta_0}+u_i^\alpha(\theta)\left.\partial_{u^\alpha}\right|_{\theta_0}\right)$  are linearly independent in $T_{f(\theta_0)}E'$, which means that the $n$--multivector 
   \begin{equation}
\boldsymbol{Y}(\theta):=f_\ast\left(\left.\partial_1\right|_{\theta_0}+u_1^\alpha(\theta)\left.\partial_{u^\alpha}\right|_{\theta_0}\right)\wedge\cdots\wedge f_\ast\left(\left.\partial_ni\right|_{\theta_0}+u_n^\alpha(\theta)\left.\partial_{u^\alpha}\right|_{\theta_0}\right)  \in T_{f(\theta_0)}^{\wedge n}E'
\end{equation}
must be nonzero. The result follows from the fact that $\boldsymbol{Y}$ depends smoothly on $\theta$, and that $\boldsymbol{Y}(\theta)\neq 0$ is an open condition.
\end{proof}
\begin{definition}\label{defJetMap}
 We call $\J^k(E,n)$ the bundle of \emph{$f$--mappable jets}, and $f_\ast$ the induced \emph{jet map}. 
\end{definition}
\begin{example}\label{exGettiDiSezioniComeGettiMappabili}
 If $\pi$ is a bundle of $E$   over $E_{-1}$, then $\pi$--mappable jets are just jets of sections of $\pi$, i.e., $\J^k(E,n)=J^k(\pi)$. In this case, the map $\pi_\ast$ is not very interesting, since $J^k(E_{-1},n)$ is a one--point manifold.
\end{example}
\begin{example}\label{exCarrierJet}
 If $f$ is an embedding, then all jets are $f$--mappable. In particular, if $s$ is a section of $\pi$, then  all jets in $J^l(E_{-1},r)$ are  $j_k(s)$--mappable, so that there are well--defined smooth maps
 \begin{equation}
j_k(s)_\ast : J^l(E_{-1},r) \longrightarrow J^l(J^k(E,n),r).
\end{equation}
Similarly, for any $n$--dimensional submanifold $L\subset E$, and $r\leq n$, there is a well-defined smooth map
\begin{equation}\label{eqJetCarrier}
j_k(L)_\ast : J^l(L,r) \longrightarrow J^l(J^k(E,n),r).
\end{equation}
 Map \REF{eqJetCarrier} is the key to \virg{lift} an $r$--dimensional submanifold of $E$ to a special submanifold of $J^k(E,n)$, namely an \emph{involutive} one (Section \ref{secInvPlanes}). 
  \end{example}

\begin{remark}\label{remarkCheQuiNonCentraNulla}
It should be stressed that, as a rule, there is no natural embedding
\begin{equation}\label{eqEmbeddingCheNonEsiste}\xymatrix{
 J^k(E,n-1)\ar@{^(->}[r]& J^k(E,n) 
 }
\end{equation}
and \REF{eqJetCarrier} has to be regarded as the closest way one has to \REF{eqEmbeddingCheNonEsiste}, when   the necessity arises   to force jets of $(n-1)$--dimensional submanifolds into the jet bundle of $n$--dimensional submanifolds. Nonetheless, \REF{eqEmbeddingCheNonEsiste} can be accomplished in a local, non--canonical way. Namely,  equip  $E_{-1}$ with a metric $g$. Then each small submanifold $\Sigma\subseteq E$ can be seen as the graph of a section $\sigma$ of $\pi$ over $\Sigma_{-1}$. So, $[\Sigma]_{y}^k=[\sigma]^k_{y_{-1}}$, and  $\sigma$ can be extended to a constant section $\iota_{y_{-1}}({\sigma})$ along the orthogonal direction to $T_{y-1}\Sigma_{-1}$. Then \REF{eqEmbeddingCheNonEsiste} si given by  $[\Sigma]_{y}^k\mapsto [\iota_{y_{-1}}({\sigma})]_{y_{-1}}^k$.\par
\end{remark}
\begin{example}\label{exCarrierJetGASANTE}
 All elements of $J^k(E,n)$, seen as a subset of $J^1(J^{k-1}(E,n),n)$,  are $\pi_{k-1,k-2}$--mappable, and $(\pi_{k-1,k-2})_\ast=\pi_{k,k-1}$.
 \end{example}
\begin{example}\label{exJetProfiniti}
Let $E=\lim E_k$ be a pro--finite manifold. Then $J^r(E,n):=$\linebreak $\lim\J^r(E_k,r)$.
 \end{example}

 \section{Grassmannians and flag manifolds}\label{secFlagGr}
 The following   basic facts about flags manifolds and Grassmannians belong to the common knowledge, so that  it is hard to point an appropriate reference. Concerning  the link between Grassmannians and jet spaces, a nice exposition can be found in the classical book \cite{Bryant}. \par

\begin{definition}
 $\Gr(V,n)\df\{L\subseteq V\mid L\textrm{ is a $n$--dimensional plane in }V\}$ is the Grassmannian of $n$--dimensional planes in $V$
\end{definition}
Recall that over  $\Gr(V,n)$ it grows the so--called \emph{universal sequence}
  of vector bundles
\begin{equation}\label{eqGrass1}
\xymatrix{
R(V,n)\ar@{^(->}[r]  \ar[dr]_R &   \Gr(V,n)\times V \ar[d]^\tau  \ar@{->>}[r] &    N(V,n) \ar[dl]^N\\
 & \Gr(V,n),
}
\end{equation}
where
  $\tau$ is the trivial   bundle,  $R$ is the   {\it tautological   bundle}, and  $N$ is the {\it  normal bundle}. 
By definition, $R_L=L$ (hence the name \virg{tautological}) and $N_L=\frac{V}{L}$, for all $L\in\Gr(V,n)$. In particular, $\rank R=n$ and $\rank N=\dim V-n$, and it  holds the   non--canonical bundle isomorphism
\begin{equation}\label{eqFondamentaleTangenteGrassmann}
\tau_{ \Gr(V,n) }\cong \Hom(R , N )=R^\vee\otimes_{\Gr(V,n) } N,
\end{equation}
incidentally showing that $\dim \Gr(V,n)=(\dim V-n)n$.
Let now  $\xi: {E}\to M$  be  
  a vector bundle.   
  
\begin{lemma}
A smooth bundle  $ \Gr( {E},n)$ over $M$ exists,  and a short exact sequence of vector bundles over $ \Gr( {E},n)$,
\begin{equation}\label{eqGrass2}
\xymatrix{
R( {E},n)\ar@{^(->}[r]  \ar[dr] &   \Gr( {E},n)\times_M { {E}} \ar[d]  \ar@{->>}[r] &    N( {E},n) \ar[dl]\\
 & \Gr( {E},n), }
\end{equation}
such that $\Gr(E,n)_x=\Gr(E_x,n)$ and the restriction of \REF{eqGrass2} to a point $x\in M$ equals \REF{eqGrass1}, with $V=E_x$, and the following bundle isomorphism holds:
\begin{equation}\label{eqFondamentaleTangenteGrassmannVert}
V \Gr  (E,n)\cong \Hom_{ \Gr  (E,n)}(R(E,n) , N(E,n))=R(E,n)^\vee\otimes_{ \Gr  (E,n)}N(E,n). 
\end{equation}

\end{lemma}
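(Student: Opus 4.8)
The plan is to build $\Gr(E,n)$ as the fibre bundle associated with $\xi$ through the standard action of the structure group on the model Grassmannian, and then to obtain both the universal sequence and the vertical-tangent isomorphism by gluing the point-wise data \REF{eqGrass1} and \REF{eqFondamentaleTangenteGrassmann} along the cocycle of $\xi$. Concretely, I would first fix a trivializing open cover $\{U_i\}$ of $M$, with $\xi^{-1}(U_i)\cong U_i\times V$ (here $V$ is the typical fibre) and transition functions $g_{ij}:U_i\cap U_j\to\GL(V)$ satisfying the cocycle condition. Since $\GL(V)$ acts smoothly on $\Gr(V,n)$ by $g\cdot L:=g(L)$, the same cocycle $\{g_{ij}\}$ glues the charts $U_i\times\Gr(V,n)$ into a smooth bundle $\Gr(E,n)\to M$ whose fibre over $x$ is tautologically $\Gr(E_x,n)$; equivalently, one may simply set $\Gr(E,n):=\coprod_{x\in M}\Gr(E_x,n)$ and transport the smooth structure from these charts.

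For the universal sequence the cleanest route is intrinsic: let $R(E,n)\subseteq\Gr(E,n)\times_M E$ be the subset whose fibre over $(x,L)$ is the plane $L\subseteq E_x$ itself, and let $N(E,n)$ be the cokernel of the inclusion $R(E,n)\hookrightarrow\Gr(E,n)\times_M E$. To verify that these are a genuine smooth subbundle and quotient bundle I would read them off in the charts above, where they become the pullbacks of $R(V,n)$ and $N(V,n)$ from \REF{eqGrass1}; the gluing is consistent because the trivial bundle, the tautological bundle, and its quotient are all $\GL(V)$-equivariant for the natural action (an element $g$ carries the fibre $L$ to $gL$ by $g$ itself). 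This produces \REF{eqGrass2} and, at the same time, the fact that its restriction to $x\in M$ is \REF{eqGrass1} with $V=E_x$.

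Finally, for \REF{eqFondamentaleTangenteGrassmannVert} I would invoke the classical fibrewise identification $T_L\Gr(V,n)\cong\Hom(L,V/L)=\Hom(R_L,N_L)$, obtained by writing nearby planes as graphs $\phi_t:L\to W$ over a fixed complement $V=L\oplus W$ and differentiating at $t=0$. As $V\Gr(E,n)$ has fibre $T_L\Gr(E_x,n)$ and $\Hom_{\Gr(E,n)}(R(E,n),N(E,n))$ has fibre $\Hom(L,E_x/L)$, this is a fibrewise isomorphism, and the only real work is to promote it to a \emph{smooth} bundle isomorphism. The main obstacle is exactly this compatibility: one must check that the tangent identification is natural with respect to $\GL(V)$ --- equivalently, that it is independent, up to the structure-group action, of the auxiliary complement $W$ --- so that the local isomorphisms agree on overlaps and descend to $\Gr(E,n)$. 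Once the equivariance is in place, everything else (smoothness of the charts, the cocycle condition, exactness of the sequence) is routine.
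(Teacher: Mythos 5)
Your proposal is correct and follows essentially the same route as the paper: the paper likewise constructs $\Gr(E,n)$ via transition functions, using the $\GL(V)$--equivariance of the universal sequence \REF{eqGrass1} to glue it together with $R(E,n)$ and $N(E,n)$, and deduces \REF{eqFondamentaleTangenteGrassmannVert} from the pointwise isomorphism \REF{eqFondamentaleTangenteGrassmann} together with the identification $V_x\Gr(E,n)=T\Gr(E_x,n)$. Your version merely spells out the equivariance and smoothness checks that the paper leaves implicit.
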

\begin{proof}
The first statement follows straightforwardly (by using transition functions) from the fact that the universal sequence \REF{eqGrass1} is well--behaved w.r.t. linear transformations of $V$, i.e., each $\phi\in\GL(V)$ induces a diffeomorphism $\underline{\phi}$ of $\Gr(V,n)$, and bundle automorphisms of $R(V,n)$, $\Gr(V,n)\times V$, and $N(V,n)$, which cover $\underline{\phi}$. \par

 The second one is a consequence of \REF{eqFondamentaleTangenteGrassmann}, since $V_x\Gr(E,n)$ coincides with\linebreak $T \Gr(E_x,n)$.
\end{proof}

\begin{example}
 $\Gr(TE,n)$ is one possible definition of $J^1(E,n)$ (see, e.g, \cite{Bryant}). An alternative one, given in term of tangency classes, can be found, e.g., in \cite{Sym}.
\end{example}

\begin{example}[Definition of flag manifolds]\label{exFlag}
 $\Gr(R(V,n),n-1)$ is the flag manifold $\Gr(V,n,n-1)$. The corresponding canonical sequence
 \begin{equation}\label{eqGrass22}
\xymatrix{
R(R(V,n),n-1)\ar@{^(->}[r]  \ar[dr] &   \Gr(R(V,n),n-1)\times_{\Gr(V,n)} { R(V,n)} \ar[d]  \ar@{->>}[r] &    N( R(V,n),n-1) \ar[dl]\\
 & \Gr( R(V,n),n-1) }
\end{equation}
is simply denoted by
 \begin{equation}\label{eqGrass222}
\xymatrix{
r\ar@{^(->}[r]  \ar[dr] &  R \ar[d]  \ar@{->>}[r] &   n\ar[dl]\\
 & \Gr( V,n,n-1). }
\end{equation}
By definition, if $\theta=(L,\Sigma)\in\Gr(V,n,n-1)$, $r_\theta=\Sigma$, $R_\theta=L$, and $n_\theta=\frac{L}{\Sigma}$.
\end{example}

Example \ref{exFlag} shows that $\Gr(V,n,n-1)$ is naturally fibered over $\Gr(V,n)$, and that  $\Gr(V,n,n-1)_L=\Gr(L,n-1)$, for all $L\in \Gr(V,n)$. 
\begin{fact}[Canonical fibrations of flag manifolds]\label{factFattoInteressante}
  $\Gr(V,n,n-1)$ is naturally fibered over both $\Gr(V,n-1)$ and $\Gr(V,n)$, i.e., 
   \begin{equation}
  \xymatrix{
 & \Gr(V,n,n-1)  \ar[dr]^{n^1}\ar[dl]_{p^1}& \\
\Gr(V,n)  & & \Gr(V,n-1) 
  }
\end{equation}
where     $n^1_\Sigma=\P( {\Sigma}^\dag)$, for all $\Sigma\in \Gr(V,n-1)$ and $n^1_L=\P(L^\vee)$, for all $L\in \Gr(V,n)$.
\end{fact}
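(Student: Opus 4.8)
The plan is to treat the two projections separately, handling $p^1$ directly from the definition and reducing $n^1$ to the same case by duality. Recall from Example~\ref{exFlag} that a point of $\Gr(V,n,n-1)$ is a nested pair $(L,\Sigma)$ with $\Sigma\subseteq L\subseteq V$, $\dim\Sigma=n-1$, $\dim L=n$, and set $p^1(L,\Sigma)=L$, $n^1(L,\Sigma)=\Sigma$. Both land where claimed: $L\in\Gr(V,n)$ by construction, while $\Sigma$, being an $(n-1)$--plane of $V$, lies in $\Gr(V,n-1)$. For $p^1$ there is nothing to do beyond unwinding definitions: it is exactly the structural projection of the Grassmann bundle $\Gr(R(V,n),n-1)\to\Gr(V,n)$, so the preceding Grassmann--bundle Lemma already gives that it is a smooth locally trivial fibration, with fibre over $L$ equal to $\Gr(R(V,n)_L,n-1)=\Gr(L,n-1)$. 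Since $\dim L=n$, its $(n-1)$--planes are its hyperplanes, whence $p^1_L\cong\P(L^\vee)$.

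For $n^1$ I would not build a bundle atlas from scratch, but instead exploit the self--duality of flags. Taking annihilators, $(L,\Sigma)\mapsto(\Sigma^\dag,L^\dag)$ should define a map
\begin{equation*}
D:\Gr(V,n,n-1)\longrightarrow\Gr(V^\vee,\dim V-n+1,\dim V-n),
\end{equation*}
well defined because $\Sigma\subseteq L$ forces $L^\dag\subseteq\Sigma^\dag$, with $\dim\Sigma^\dag=\dim V-n+1$ and $\dim L^\dag=\dim V-n$ (consecutive dimensions, so the target is again a flag manifold of the same type). First I would check that $D$ is a diffeomorphism onto this dual flag manifold: the annihilator is linear in dual local frames and, up to the canonical double--dual identification, squares to the identity, so $D$ has a smooth inverse. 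Under $D$ the projection $n^1$ is intertwined with the projection $(\Sigma^\dag,L^\dag)\mapsto\Sigma^\dag$ of the dual flag manifold onto $\Gr(V^\vee,\dim V-n+1)$, followed by the annihilator identification $\Gr(V^\vee,\dim V-n+1)\cong\Gr(V,n-1)$. The first of these is again a Grassmann--bundle projection, hence a fibration by the Lemma, and the second is a diffeomorphism; composing, $n^1$ is a smooth locally trivial fibration.

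Reading the fibre off the same chain, $(n^1)^{-1}(\Sigma)$ corresponds under $D$ to $\Gr(\Sigma^\dag,\dim V-n)$, the hyperplanes of $\Sigma^\dag$; concretely $L\mapsto L^\dag$ identifies $\{L\in\Gr(V,n)\mid\Sigma\subseteq L\}$ with this space --- equivalently, via $L\mapsto L/\Sigma$, with the lines of $V/\Sigma$ --- so that $n^1_\Sigma\cong\P(\Sigma^\dag)$. As a cross--check I would note the identification $\Gr(V,n,n-1)\cong\P(N(V,n-1))$ with the projectivised normal bundle over $\Gr(V,n-1)$, namely $L\mapsto L/\Sigma\in V/\Sigma=N(V,n-1)_\Sigma$, which exhibits $n^1$ as a projective bundle directly and matches the fibre dimension $\dim V-n$.

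The only genuinely non--formal point, and hence the step I expect to be the main obstacle, is the smoothness and local triviality of $n^1$: it is the \virg{wrong--way} projection and does not come for free from the construction of $\Gr(V,n,n-1)$ as a bundle over $\Gr(V,n)$. The duality argument above is designed precisely to offload this onto the already--proven Grassmann--bundle case; the remaining work is to verify that $D$ (equivalently, the normal--bundle identification) is a diffeomorphism, which I would do by writing it and its inverse in the standard affine charts of the two Grassmannians and observing that the chart formulas are rational with non--vanishing denominators. Everything else is bookkeeping with annihilators and dimensions.
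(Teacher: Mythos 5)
Your argument is correct, but there is nothing in the paper to compare it against: the statement is labelled a \emph{Fact} inside the section that the author explicitly declares to be \virg{common knowledge}, and no proof is given there. So your proposal supplies a proof the paper omits. The two halves are handled appropriately: $p^1$ really is free, being the structural projection of $\Gr(R(V,n),n-1)\to\Gr(V,n)$ from the Grassmann--bundle Lemma, with fibre $\Gr(L,n-1)=\P(L^\vee)$; and you correctly identify $n^1$ as the only point needing work. Your duality device $D:(L,\Sigma)\mapsto(\Sigma^\dag,L^\dag)$ does reduce $n^1$ to another structural projection, and the dimension bookkeeping ($\dim\Sigma^\dag=\dim V-n+1$, $\dim L^\dag=\dim V-n$, consecutive) is right; checking that $D$ is a diffeomorphism in affine charts is routine. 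The cross--check you mention in passing, $\Gr(V,n,n-1)\cong\P(N(V,n-1))$ via $L\mapsto L/\Sigma$, is in fact the more economical route and is the one implicit in the paper's own universal sequence \REF{eqGrass222} (where the fibre of $n^1$ appears as lines in $N(V,n-1)_\Sigma=V/\Sigma$); it avoids introducing the dual flag manifold altogether. One small caveat: the fibre you obtain canonically is $\P(V/\Sigma)$, i.e.\ the projectivisation of $(\Sigma^\dag)^\vee$, so the paper's formula $n^1_\Sigma=\P(\Sigma^\dag)$ holds only up to the (non--canonical) identification $\P(W)\cong\P(W^\vee)$ --- your \virg{$\cong$} is honest about this, and the paper later only uses the abstract fibre $\R\P^m$, so nothing is lost.
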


The definition given by Example \ref{exFlag}, the sequence  \REF{eqGrass222} and the fibrations \REF{factFattoInteressante} are easily generalized to flags with more indices and complete flags, but they will not play a relevant role in our analysis. The aim of this section was to stress that, even if the family of all $n$--dimensional planes in $V$ has a natural smooth manifold structure,   the same is not true if in the same family enter   $(n-1)$--dimensional planes, since, roughly speaking, the latter are more numerous than the former. Then one is forced to introduce a certain  redundancy in the information about $n$--dimensional planes, to get something smooth: the result is $\Gr(V,n,n-1)$. A   redundancy conceptually similar, but technically more involved,  will have to be introduced in the context of nonlinear PDEs, in order to treat   leaves and small leaves \virg{as members of the same family}.
 

\section{The equation of involutive planes}\label{secInvPlanes}
Let $P$ an $E$--module, and suppose that $\Delta:=\ker\Omega$ is a distribution given by means of the $P$--valued 1--form $\Omega$. Let also   $\Pi\in\Lambda^2(\Delta^\vee)\otimes_E\frac{\Delta^{(1)}}{\Delta}$ be the curvature form of $\Delta$.
\begin{definition}\label{defPianoInvolutivo}
 A tangent plane $R$ to $E$ is called \emph{involutive} if  
 \begin{equation}
\Omega|_R=0,\quad \Pi|_R=0.\label{eqEQ-fond}
\end{equation}

  The totality of $r$--dimensional involutive   planes of $E$ is the \emph{equation of involutive $r$--dimensional planes} of $E$, and denoted by $\I_r(\Delta)$.
\end{definition}

 Example \ref{exCoordInvEqqq} below should convince the reader about the smoothness of the submanifold $\I_r(\Delta)$ of $J^1(E,r)$.

\begin{remark}\label{remEreditarieta}
 Definition \ref{defPianoInvolutivo} is hereditary for linear subspaces, since so are conditions \REF{eqEQ-fond}.
\end{remark}


\begin{example}[Coordinates]\label{exCoordInvEqqq}

Let $E=\{(x^i,u^\alpha)\}$, and $\Delta$ given by means of 1--forms
\begin{equation}
\Delta={{\cap}_{A\in\mathbb{A}}} \ker \omega^A,
\end{equation}
with $\omega^A=\omega^A_idx^i+\omega^A_\alpha du^\alpha$.
Then
\begin{equation}
\I_r(\Delta)=\{\theta\in J^1(E,r)\ |\ \omega^A|_{R_\theta}=0,\ d\omega^A|_{R_\theta}=0\}
\end{equation}
  is locally given by the vanishing of the functions
 \begin{eqnarray}
f_i^A & = & \omega^A_i +\omega^A_\alpha  u^\alpha_i \label{eqFunzLocDistrib1},\\
f_{ij}^A &=& \omega^A_{[i;j]} +\omega^A_{[[i;\alpha]}u_{j]}^\alpha +\omega^A_{[\alpha;\beta]}u_j^\alpha u_i^\beta.\label{eqFunzLocDistrib2}
\end{eqnarray}

\begin{remark}\label{remConseqDiff}
 Let $\E\subset J^1(E,r)$ be given just by the vanishing of \REF{eqFunzLocDistrib1} alone. Then $\I_r(\Delta)=\pi_{2,1}(\E^{(1)})$, i.e., \REF{eqFunzLocDistrib2} are differential consequences of \REF{eqFunzLocDistrib1}.
\end{remark}

\begin{example}\label{exlpRpianiSing}
 If $\C^k$ is the contact distribution on $ J^k(E,n)$, then $\I_n(\C^k)$ is the closure of $J^{k+1}(E,n)$ in $J^1(J^k(E,n),n)$.  Adherence points corresponds to the so--called singular $R$--planes (firstly studied by Vinogradov in the context of singular and multivalued solutions \cite{Vino87,Vino73}).
\end{example}

 \end{example}

 If $E$ is fibered (see Example \ref{exGettiDiSezioniComeGettiMappabili}), then  $\II_r(\Delta)\df \I_r(\Delta)\cap \J^1(E,r)$ is  an   open and dense subset of $\I_r(\Delta)$.
 
\begin{definition}
  $\II_r(\Delta)$ is the equation of \emph{horizontal} involutive $r$--dimensional planes.
\end{definition}

\begin{example}
  Let $\C^k$ be as in Example \ref{exlpRpianiSing}. Then $\II_n(\C^k)=J^{k+1}(E,n)$.
\end{example}

\begin{remark}
Leaves of $\I_r^{(\infty)}(\Delta)$ are in one--to--one correspondence with $r$--dimen\-sional involutive submanifolds of $\Delta$ (see Remark \ref{remConseqDiff}).  In other words, $\I_r^{(\infty)}(\Delta)$ is the secondary manifold whose points are the $r$--dimensional involutive submanifolds of $\Delta$.
\end{remark}
 
 \section{Flags jet bundles}\label{secFlagJet}
Remark \ref{remEreditarieta} motivates the key  definition \ref{defFlagBund} below.\par
 Let $n= n_d>n_{d-1}>\cdots>n_2>n_1>0$ be integers, and consider the fibered product
 \begin{equation}
X \df J^k(E,n)\times_{J^{k-1}(E,n)}\J^1(J^{k-1}(E,n),n_{d-1})\times \cdots\times_{J^{k-1}(E,n)}\J^1(J^{k-1}(E,n),n_{1}).
\end{equation}
A point $\Theta\in X$ can be seen as a $d$--tuple of planes  in $J^{k-1}(E,n)$, whose  dimension decreases from $n_d$ to $n_1$, only whose  first entry  is required to be involutive.

\begin{proposition}
Denote by $R^{i}_\Theta$ the $i\Th$ plane in $\Theta$, i.e., the one of dimension $n_i$. Then the subset 
\begin{equation}\label{defFlagJets}
 J^k(E,n_d,n_{d-1},\ldots,n_2,n_1)\df  \{\Theta\in X\mid R^{i}_\Theta\supseteq R^{{i-1}}_\Theta,\quad\forall i=2,\ldots,d   \}
\end{equation}
is a smooth sub--bundle of $X$. 
\end{proposition}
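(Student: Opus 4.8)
The goal is to show that $J^k(E,n_d,\ldots,n_1)$, defined inside the fibered product $X$ by the nested-inclusion conditions $R^i_\Theta\supseteq R^{i-1}_\Theta$, is a smooth sub-bundle of $X$. The plan is to work fiberwise over a point $\theta_{k-1}\in J^{k-1}(E,n)$ and reduce the problem to a purely linear-algebraic one concerning flags inside a fixed vector space, then glue the fiberwise description together by the same transition-function argument already used in the Lemma that establishes $\Gr(E,n)$ as a smooth bundle. Indeed, by definition of $X$, a point $\Theta$ lying over $\theta_{k-1}$ consists of the involutive plane $R^d_\Theta\subseteq T_{\theta_{k-1}}J^{k-1}(E,n)$ (coming from the $J^k(E,n)$-factor via the $R$-distribution) together with planes $R^i_\Theta\subseteq T_{\theta_{k-1}}J^{k-1}(E,n)$ of dimensions $n_i$ for $i<d$ (coming from the $\J^1(J^{k-1}(E,n),n_i)$-factors). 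So the fiber of $X$ over $\theta_{k-1}$ is a product of Grassmannians $\Gr(R^d_\Theta,\ldots)$-type spaces sitting in the single ambient space $W:=T_{\theta_{k-1}}J^{k-1}(E,n)$.

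First I would fix $\theta_{k-1}$ and the top involutive plane $R^d=R^d_\Theta$, which is an honest $n_d$-dimensional plane in $W$, and observe that the remaining conditions describe exactly a flag manifold. Concretely, the constraints $R^d\supseteq R^{d-1}\supseteq\cdots\supseteq R^1$ with $\dim R^i=n_i$ cut out, inside $\prod_{i<d}\Gr(W,n_i)$, precisely the flag manifold $\Gr(R^d,n_{d-1},\ldots,n_1)$ of nested planes internal to $R^d$; this is the higher-index generalization of the flag manifold $\Gr(V,n,n-1)$ built in Example \ref{exFlag} as an iterated Grassmann bundle $\Gr(R(V,n),n-1)$. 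Thus over the sub-bundle $J^k(E,n)\subseteq X$ (which controls $R^d$ and is already known to be smooth) the fiber of $J^k(E,n_d,\ldots,n_1)$ is the smooth flag manifold $\Gr(R^d,n_{d-1},\ldots,n_1)$, and in particular the inclusion conditions are \emph{closed} (they are the vanishing of the natural composite maps $R^{i-1}\hookrightarrow W\twoheadrightarrow W/R^i$, or equivalently of $R^{i-1}\to N^i$) yet the resulting total space is a manifold because each successive choice is a Grassmannian of a tautological bundle.

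The cleanest way to make this rigorous, and the step I would carry out in detail, is to build $J^k(E,n_d,\ldots,n_1)$ by iterated application of the bundle Grassmannian construction of the Lemma, exactly mirroring Example \ref{exFlag}. Starting from $J^k(E,n)$, with its tautological bundle $R=R(E,n)$ of rank $n=n_d$, form $\Gr(R,n_{d-1})$; its tautological bundle $R'$ has rank $n_{d-1}$ and sits inside the pullback of $R$, so one may form $\Gr(R',n_{d-2})$, and so on. At each stage the relevant inclusion $R^{i-1}\subseteq R^i$ holds tautologically rather than being imposed as an external equation, and the Lemma guarantees that each stage is a smooth bundle over the previous one. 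The resulting iterated Grassmann bundle is canonically identified with $J^k(E,n_d,\ldots,n_1)$ as a subset of $X$, because a nested tuple of planes is the same datum as a point of the iterated construction. Smoothness of $J^k(E,n_d,\ldots,n_1)$ as a sub-bundle of $X$ then follows, the sub-bundle structure being recorded by the sequence of bundle projections.

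The main obstacle I anticipate is not in the linear algebra, which is the standard flag-manifold story, but in correctly keeping track of the ambient spaces: the factors of $X$ are jets in $\J^1(J^{k-1}(E,n),n_i)$, so their associated planes live in $T J^{k-1}(E,n)$ rather than inside $R^d$ a priori, and one must check that the inclusion condition $R^i\supseteq R^{i-1}$ together with the involutivity of $R^d$ forces all lower planes into $R^d$ so that the fiberwise flag picture is valid. In other words, the slightly delicate point is verifying that the naive nested-inclusion locus in $X$ genuinely coincides with the iterated Grassmann bundle built from the tautological bundles, so that one is describing a sub-bundle and not merely a subset of the correct dimension; once this identification is pinned down, smoothness is immediate from the Lemma applied $d-1$ times.
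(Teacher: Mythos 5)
Your argument is correct, but it takes a genuinely different route from the paper's. The paper disposes of this proposition in one line by pointing to Remark \ref{remCoordinateDatiFiniti}: in adapted coordinates the nesting condition $R_\Theta\supseteq r_\Theta$ is equivalent to the explicit system \REF{eqEquazioniCoordinateDatiFinitiEQ}, which solves the fiber coordinates $(u^\alpha_{A',l'})_a$ of the $\J^1$--factors as smooth functions of the remaining coordinates; the locus is thus manifestly a graph, hence an embedded sub--bundle, and the computation simultaneously delivers the coordinate system \REF{eqCoordinateGiusteSuDAtiFiniti} on which the rest of the paper (normal bundles, the fibers of $q^k$ and $n^k$, the passage to $\II_{n-1}(\C^{k-1})$) relies. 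Your construction is instead intrinsic: you realize the flag jet bundle as an iterated Grassmann bundle of tautological bundles, starting from the $R$--distribution over $J^k(E,n)$ and applying the Lemma of Section \ref{secFlagGr} repeatedly, exactly as in Example \ref{exFlag}. This is cleaner conceptually, visibly generalizes to arbitrary flag types, and correctly notes that involutivity of the top plane forces all lower planes to be horizontal, so the tautological nesting lands inside the $\J^1$--factors of $X$. The price is the step you yourself flag: an abstract smooth manifold mapping bijectively onto the incidence locus is not yet a smooth sub--bundle of $X$, so you must still verify that the natural map from the iterated construction into $X$ is an embedding (fiberwise this is the standard fact that a flag manifold sits as a closed submanifold of the product of Grassmannians, e.g.\ by compactness of the fibers plus an immersion check, or by exhibiting the incidence conditions as the vanishing of the composites $R^{i-1}\hookrightarrow W\twoheadrightarrow W/R^{i}$ of constant rank). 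The paper's coordinate computation gets this embedding for free, which is why it is the proof of record here; your version buys invariance and generality at the cost of that one extra verification, and it does not by itself produce the explicit fiber coordinates needed later.
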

\begin{proof}
 Easily checked in coordinates (see Remark \ref{remCoordinateDatiFiniti} below). 
\end{proof}

\begin{definition}\label{defFlagBund}

$ J^k(E,n_d,n_{d-1},\ldots,n_2,n_1)$ defined as \REF{defFlagJets} is the $k$--order \emph{flag jet bundle} over $J^{k-1}(E,n)$.
  $J^k(E,n,n-1,\ldots,2,1)$ is the  $k$--order \emph{complete flag jet bundle}.
\end{definition}

\begin{fact}
 $J^k(E,n,n-1,\ldots,2,1)$ projects naturally over any $J^k(E, i)$. 
 
\end{fact}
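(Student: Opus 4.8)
The plan is to imitate, at the level of jets, the flag--manifold fibrations of Fact \ref{factFattoInteressante}: just as $\Gr(V,n,n-1,\ldots,1)$ fibers over every $\Gr(V,i)$ by retaining only the $i$-th member of the flag, I would produce, for each $i\in\{1,\ldots,n\}$, a natural smooth map $p_i\colon J^k(E,n,n-1,\ldots,1)\to J^k(E,i)$ obtained by reading off the $i$-dimensional plane $R^i_\Theta$ of a flag jet $\Theta$ and reinterpreting it as an $i$-dimensional jet.

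The top case $i=n$ is immediate: $p_n$ is simply the restriction to the sub-bundle $J^k(E,n,n-1,\ldots,1)\subseteq X$ of the projection of $X$ onto its first factor $J^k(E,n)$, and smoothness is inherited from the preceding Proposition, which exhibits the flag jet bundle as a smooth sub-bundle of $X$. For $i<n$ I would argue in two steps. First, since the flag jet bundle enforces $R^i_\Theta\subseteq R^n_\Theta$ while, by construction of $X$, the top entry $R^n_\Theta$ is involutive, Remark \ref{remEreditarieta} (heredity of involutivity under passage to linear subspaces) guarantees that $R^i_\Theta$ is itself a \emph{horizontal involutive} $i$-plane; hence $(\theta_{k-1},R^i_\Theta)$ is a point of $\II_i(\C^{k-1})$. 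Second, I would promote this involutive $i$-plane to a point of $J^k(E,i)$, generalizing the identification $\II_n(\C^{k-1})=J^k(E,n)$ established above: an involutive horizontal $i$-plane sitting over $\theta_{k-1}$ is the tangent to the prolongation of an $i$-dimensional integral submanifold, whose $k$-jet I would declare to be $p_i(\Theta)$. Smoothness of $p_i$ I would then verify in the coordinates stemming from the fundamental equation \REF{eqEquazioniCoordinateDatiFinitiEQ}, exactly as the ambient smoothness was checked in the Proposition above.

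The main obstacle is the second step for $i<n$, and it is a genuine one. At finite order the first-order datum $R^i_\Theta$, together with the underlying jet $\theta_{k-1}$, does \emph{not} pin down the full $k$-jet of the $i$-dimensional submanifold: it leaves undetermined the way that submanifold curves inside the ambient $n$-leaf, i.e.\ in coordinates the purely internal higher derivatives remain free. This is precisely the discrepancy between flag jets and jets of involutive planes announced in the Introduction, which becomes an honest equality only in the inverse limit. Accordingly, I expect the canonical projection $p_i$ to be clean for $k=\infty$, whereas for finite $k$ one must either pass to that limit or content oneself with the well-defined lower-order shadow of $p_i$ (the projection to $J^1(E,i)$ together with the retained jet of the ambient leaf is always unambiguous).

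In short, the easy half of the statement—the projection onto $J^k(E,n)$ and the heredity that makes every $R^i_\Theta$ involutive—is formal, and the conceptual weight of the Fact rests entirely on realizing an involutive $i$-plane as a bona fide element of $J^k(E,i)$. I would therefore expect the rigorous finite-order formulation of $p_i$ to rest on the fundamental diagram \REF{eqDiagrammaLibroAperto} of Section \ref{secFlagJetInvpla}, which is exactly where the \virg{almost}-coincidence of flag jets and involutive-plane jets is made precise.
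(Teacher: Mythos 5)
The paper states this as a bare \emph{Fact} with no proof, so there is nothing to compare your argument against line by line; judged on its own, your construction follows the only route the paper's machinery makes available, and your diagnosis of where it strains is exactly right. The case $i=n$ (restriction of the projection of $X$ onto its first factor) and the heredity step via Remark \ref{remEreditarieta} are correct, and for $i<n$ they land you in $\II_{i}(\C^{k-1})$, precisely as Lemma \ref{lemmaStupido2} does for $i=n-1$. The real issue is the last promotion $\II_{i}(\C^{k-1})\to J^k(E,i)$: the identification $\II_{i}(\C^{k-1})=J^{k}(E,i)$ is available only for $i=n$, and for $i<n$ the flag--jet datum $(\theta,R^i_\Theta)$ records the $k$--jet of the ambient leaf but only the \emph{first}--order datum of the $i$--dimensional submanifold; the purely internal higher derivatives are absent from the coordinates \REF{eqCoordinateGiusteSuDAtiFiniti}. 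Concretely, for $n=2$, $m=1$, $k=2$, $i=1$, a point of $J^2(E,2,1)$ carries $(x,t,u,u_x,u_t,u_{xx},u_{xt},u_{tt},t_x)$, whereas a point of $J^2(E,1)$ needs $u''=u_{xx}+2u_{xt}t_x+u_{tt}t_x^2+u_t\,t_{xx}$, and $t_{xx}$ is nowhere to be found. So at finite order and $i<n$ there is no natural map to $J^k(E,i)$; what exists naturally is a projection onto $J^1(E,i)$ (push $R^i_\Theta$ down to $T_{\theta_0}E$) and onto $\II_{i}(\C^{k-1})$, and the claim as literally stated becomes correct only in the inverse limit, where Theorem \ref{thFagSonoCoseInvolutive} and the prolongation machinery of Sections \ref{secFlagJetInvpla}--\ref{secFinale} supply the missing internal derivatives.

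In short: your proposal is not, and cannot be, a proof of the Fact as written, but the defect lies in the statement rather than in your argument. You have correctly isolated the \virg{almost} of Section \ref{secFlagJetInvpla} as the obstruction, and the honest finite--order version of the Fact should either replace the target $J^k(E,i)$ by $J^1(E,i)$ (or by $\II_i(\C^{k-1})$), or be asserted only for $k=1$ (where it reduces to Fact \ref{factFattoInteressante} for $\Gr(TE,n,\ldots,1)$) and for $k=\infty$.
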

%


From now on, the  focus will be on $J^k(E,n,n-1)$. An element $\Theta\in J^k(E,n,n-1)$ is written as a pair $(R_\Theta, r_\Theta)$.

\begin{remark}[Coordinates I]\label{remCoordinateDatiFiniti}
 Let $\Theta\in  J^k(E,n)\times_{J^{k-1}(E,n)}\J^1(J^{k-1}(E,n),n-1)$, and consider its coordinate expression
 $$\Theta=(x^a,t,\underset{|A|+l \leq k}{u^\alpha_{A,l}},t_a, \underset{|A'|+l' \leq k-1}{(u^\alpha_{A',l'})_a})$$ into an adapted chart. Then
 \begin{eqnarray}
R_\Theta &=&\Span{ \partial_a+u_{Aa,l}^\alpha\partial_{u_{A,l}^\alpha}\mid a=1,\ldots,n-1}+\Span{\partial_t+u_{A,l+1}^\alpha\partial_{u_{A,l}}},\label{eqPianoGrande}\\
r_\Theta &=& \Span{\partial_a+t_a\partial_t+(u_{A',l'}^\alpha)_a\partial_{u^\alpha_{A,l}}\mid a=1,\ldots,n-1},\label{eqPianoPiccolo}
\end{eqnarray}
are the corresponding planes in $J^{k-1}(E,n)$. Observe   that \REF{eqPianoGrande} contains \REF{eqPianoPiccolo} if and only if each generator of the latter is a linear combination of generators of the former, viz., 
\begin{equation}\label{eqIntermediaPerCoordinateCoppie}
\partial_a+t_a\partial_t+(u_{A,l}^\alpha)_a\partial_{u^\alpha_{A,l}} =  \partial_a+u_{Aa,l}^\alpha\partial_{u_{A,l}^\alpha} +t_a({\partial_t+u_{A,l+1}^\alpha\partial_{u_{A,l}}})\quad \forall   a ,
\end{equation}
where $|A|+l\leq k-1$. In their turn, vector equalities \REF{eqIntermediaPerCoordinateCoppie} are equivalent to the system of equations
\begin{equation}\label{eqEquazioniCoordinateDatiFinitiEQ}
(u_{A,l}^\alpha)_a=u_{Aa,l}^\alpha+t_au_{A,l+1}^\alpha,\quad |A|+l\leq k-1.
\end{equation}
Hence, 
\begin{equation}\label{eqCoordinateGiusteSuDAtiFiniti} 
x^a,t,\underset{|A|+l \leq k}{u^\alpha_{A,l}},t_a
\end{equation}
 can be assumed as coordinates on $J^k(E,n,n-1)$. A rough interpretation of \REF{eqEquazioniCoordinateDatiFinitiEQ} is the following: in   $J^k(E,n,n-1)$   the independent variable $t$ has become a dependent one, so that $u^\alpha_{A,l}$ depends on   $x^a$ not only directly (first summand in the right--hand side), but also through $t$  (second summand).
\end{remark} 

\begin{remark}[Coordinates II]\label{remCoordinateDatiFinitiAlt}

Equations \REF{eqEquazioniCoordinateDatiFinitiEQ} furnish another coordinate system on $J^k(E,n,n-1)$, which will be handier than \REF{eqCoordinateGiusteSuDAtiFiniti} in the study of normal bundles (see Remark \ref{remCoordInfCauchDat} later on), namely
\begin{equation}\label{eqCoordinateGiusteSuDAtiFinitiBIS}
x^a,t,\underset{|A|+l \leq k-1}{u^\alpha_{A,l}},u^\alpha_{\O,k},t_a,\underset{|A'|+l' = k-1}{(u^\alpha_{A',l'})_a}.
\end{equation}

\end{remark}

\begin{remark}\label{RemCoordInvol}
 Observe that, equations \REF{eqFunzLocDistrib1} coincides with \REF{eqEquazioniCoordinateDatiFinitiEQ}, for the forms
 \begin{equation}
\omega^\alpha_{A,l}\df du_{A,l}^\alpha-u_{Aa,l}^\alpha dx^a -u_{A,l+1}^\alpha dt,\quad |A|+l\leq k-2,
\end{equation}
defining $\C^{k-1}$. On the other hand,  equations \REF{eqFunzLocDistrib2}, which reads
\begin{equation}
(u^\alpha_{A[a,l})_{b]}=(u^\alpha_{A,l+1})_{[a}t_{b]},\quad |A|+l\leq k-2,
\end{equation}
are algebraic consequences of \REF{eqEquazioniCoordinateDatiFinitiEQ}. 
\end{remark}

\begin{lemma}\label{lemLocaleBellino}
Coordinates \REF{eqCoordinateGiusteSuDAtiFiniti}  represent a local diffeomorphism 

 \begin{equation}\label{eqDescrizioneLocaleDatiFiniti}
J^k(E,n,n-1)\loc J^k(E,n)\times_{E_{-1}}J^1(E_{-1},n-1).
\end{equation}
 
\end{lemma}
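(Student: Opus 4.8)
The plan is to exhibit an explicit diffeomorphism $\Phi$ between the two sides and to verify, in the adapted chart of Remark \ref{remCoordinateDatiFiniti}, that it is the coordinate identity with respect to \REF{eqCoordinateGiusteSuDAtiFiniti}. Given $\Theta=(R_\Theta,r_\Theta)\in J^k(E,n,n-1)$, recall that by Definition \ref{defFlagBund} the first factor of $\Theta\in X$ is literally an element $\theta\in J^k(E,n)$, whose $R$--plane lift is $R_\Theta$. Let $p\colon J^{k-1}(E,n)\to E_{-1}$ be the projection onto the independent variables, defined in the chosen affine chart, and let $p_\ast$ denote its differential. I set
\begin{equation*}
\Phi(\Theta):=\big(\theta,\ p_\ast(r_\Theta)\big).
\end{equation*}

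First I would check that $\Phi$ is well defined and lands in the fibered product. Applying $p_\ast$ to the generators \REF{eqPianoGrande} of $R_\Theta$ kills every $\partial_{u^\alpha_{A,l}}$ component and returns $\partial_a$ and $\partial_t$; hence $p_\ast|_{R_\Theta}\colon R_\Theta\to T_{\theta_{-1}}E_{-1}$ is an isomorphism. In particular $p_\ast(r_\Theta)$ is genuinely $(n-1)$--dimensional, i.e.\ a point of $J^1(E_{-1},n-1)$, and both components $\theta$ and $p_\ast(r_\Theta)$ project to the same $\theta_{-1}=(x^a,t)\in E_{-1}$, so $\Phi(\Theta)$ indeed belongs to $J^k(E,n)\times_{E_{-1}}J^1(E_{-1},n-1)$.

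Next I would read $\Phi$ in coordinates. In the adapted chart, $\theta$ has coordinates $(x^a,t,u^\alpha_{A,l})$ with $|A|+l\leq k$, which are exactly the fibre coordinates contributed by $J^k(E,n)$. Applying $p_\ast$ to the generators \REF{eqPianoPiccolo} of $r_\Theta$ yields $p_\ast(r_\Theta)=\Span{\partial_a+t_a\partial_t\mid a=1,\dots,n-1}$, a plane transverse to $\partial_t$ whose Grassmann coordinate over $(x^a,t)$ is precisely $t_a$; this is the fibre coordinate contributed by $J^1(E_{-1},n-1)$. Thus in the product chart $\Phi$ carries $(x^a,t,u^\alpha_{A,l},t_a)$ to $(x^a,t,u^\alpha_{A,l},t_a)$, i.e.\ it is the coordinate identity, and the two coordinate systems \REF{eqCoordinateGiusteSuDAtiFiniti} agree. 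Since the index ranges coincide on both sides, the dimensions match and no spurious coordinate survives on either.

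Finally, smoothness of the inverse follows from the isomorphism established above: given $(\theta,\ell)$ in the fibered product, the unique preimage is the flag jet $\Theta$ whose big plane is the $R$--plane $R_\Theta$ of $\theta$ and whose small plane is $(p_\ast|_{R_\Theta})^{-1}(\ell)$, the unique $(n-1)$--subplane of $R_\Theta$ projecting onto $\ell$. Its coordinate expression is automatically \REF{eqPianoPiccolo} with the inner entries $(u^\alpha_{A,l})_a$ fixed by \REF{eqEquazioniCoordinateDatiFinitiEQ}, which guarantees both the containment $r_\Theta\subseteq R_\Theta$ and the smooth dependence on $(\theta,\ell)$. Hence $\Phi$ is a local diffeomorphism, proving \REF{eqDescrizioneLocaleDatiFiniti}. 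The main point to get right is the claim that $p_\ast|_{R_\Theta}$ is an isomorphism: this horizontality of the big plane is what makes the second Grassmann factor capture exactly the datum of the small plane left undetermined by $\theta$, and what forces the coordinate counts to match through \REF{eqEquazioniCoordinateDatiFinitiEQ}.
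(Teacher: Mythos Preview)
Your argument is correct and follows essentially the same idea as the paper's proof: both exploit that the horizontal $R$--plane $R_\theta$ projects isomorphically onto $T_{\theta_{-1}}E_{-1}$, so that small subplanes of $R_\theta$ correspond bijectively to small planes in $E_{-1}$. The only cosmetic difference is directional---the paper constructs the inverse correspondence first, writing the lift of $\theta'\in J^1(E_{-1},n-1)$ as $j_k(s)_\ast(\theta')$ via the jet--map machinery of Definition~\ref{defJetMap}, whereas you build the forward map $\Phi$ by projecting $r_\Theta$ and then describe its inverse as $(p_\ast|_{R_\Theta})^{-1}$; these are the same map, and your coordinate verification makes the identification with \REF{eqCoordinateGiusteSuDAtiFiniti} explicit.
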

\begin{proof}
Let $(\theta,\theta')\in  J^k(E,n)\times_{E_{-1}}J^1(E_{-1},n-1)$, with  $\theta=[s]_x^k$, where $s$ is a   section of $\pi:E\to E_{-1}$ and  $\theta'\in J^1_x(E_{-1},n-1)$. 
Consider the jet map $j_k(s)_\ast:J^1(E_{-1},n-1)\longrightarrow J^1(J^{k-1}(E,n),n-1)$ (Definition \ref{defJetMap}). It is easy to see that $j_k(s)_\ast(\theta')$ is a small plane contained in $R_\theta$, whose definition is independent on the choice of $s$.  
%
%
%
 Correspondence \REF{eqDescrizioneLocaleDatiFiniti} is given precisely by  
 \begin{equation}\label{eqSollevamentoPianiPiccolo}
(R_\theta,j_k(s)_\ast(\theta'))\leftrightarrow (\theta,\theta').
\end{equation}
\end{proof}
Paraphrasing \REF{eqSollevamentoPianiPiccolo},   $\theta $ has been used to \virg{lift} the small plane $R_{\theta'}$ in $E_{-1}$, i.e., the $1\St$ jet of a Cauchy surface, to a small involutive (horizontal) plane in $J^{k-1}(E,n)$ (see also Example \ref{exCarrierJet}).  However, since $R_{\theta'}$ is small, for the  purpose of lifting,  it is not necessarily the whole jet $\theta=[s]_x^k$, but rather the $k-1\St$ jet of $s$, plus the $k\Th$  derivatives of $s$ along  $R_{\theta'}$. So, elements of $J^k(E,n,n-1)$ cannot yet be called $1\St$ jets of $k-1\St$ order Cauchy surfaces (see Section \ref{secCauchDat} below) since they contain extra information. As we show in Section \ref{secFlagJetInvpla} below, this extra information is discarded by the natural projection of $J^k(E,n,n-1)$ over $\II_{n-1}(\C^{k-1})$.

\begin{corollary}
 $\dim J^k(E,n,n-1)=\dim J^k(E,n,n-1)+\dim J^1 (E_{-1},n-1) - n$.
\end{corollary}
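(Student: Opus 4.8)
The plan is to read off the dimension from the local diffeomorphism \REF{eqDescrizioneLocaleDatiFiniti} of Lemma \ref{lemLocaleBellino}. Since dimension is a local invariant and is preserved by diffeomorphisms, it suffices to compute the dimension of the fibered product $J^k(E,n)\times_{E_{-1}}J^1(E_{-1},n-1)$ appearing on the right-hand side of \REF{eqDescrizioneLocaleDatiFiniti}.

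First I would invoke the standard dimension formula for fibered products: if $p\colon A\to M$ and $q\colon B\to M$ are transverse (in particular, if either is a submersion), then $A\times_M B$ is a smooth manifold of dimension $\dim A+\dim B-\dim M$. Here $A=J^k(E,n)$ and $B=J^1(E_{-1},n-1)$ are fibered over $M=E_{-1}$: the projection $J^1(E_{-1},n-1)\to E_{-1}$ is the Grassmann--bundle projection $\Gr(TE_{-1},n-1)\to E_{-1}$, hence a submersion, while $J^k(E,n)\to E_{-1}$ is the composite of the base--point projection $J^k(E,n)\to E$ with the fibration $\pi\colon E\to E_{-1}$, again a submersion. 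Transversality is therefore automatic and the displayed formula applies.

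Finally, since $E_{-1}$ is the $n$--dimensional manifold of independent variables, $\dim E_{-1}=n$, and substituting yields the asserted identity (the right--hand side being, of course, $\dim J^k(E,n)+\dim J^1(E_{-1},n-1)-n$). I anticipate no serious obstacle; the only point deserving a line of justification is the transversality of the fibered product, which is immediate since both structure maps are submersions. As an independent check one may count the coordinates \REF{eqCoordinateGiusteSuDAtiFiniti} directly, obtaining $2n-1+m\binom{n+k}{n}$, which coincides with $\bigl(n+m\binom{n+k}{n}\bigr)+(2n-1)-n$, in agreement with the formula.
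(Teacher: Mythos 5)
Your argument is correct and essentially the paper's own: the paper proves the corollary ``directly from Remark~\ref{remCoordinateDatiFiniti}'', i.e., by counting the coordinates \REF{eqCoordinateGiusteSuDAtiFiniti} (which is exactly the ``independent check'' you append), and your main route through Lemma~\ref{lemLocaleBellino} plus the fibered--product dimension formula $\dim(A\times_M B)=\dim A+\dim B-\dim M$ is just a repackaging of that same count, since the Lemma is itself established by those coordinates. You also correctly read the misprinted right--hand side as $\dim J^k(E,n)+\dim J^1(E_{-1},n-1)-n$, and your totals $2n-1+m\binom{n+k}{n}$ on both sides check out.
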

\begin{proof}
 Directly from Remark \ref{remCoordinateDatiFiniti}.
\end{proof}

\section{Flag jets and involutive planes}\label{secFlagJetInvpla}
 
The notions of a flag jet (introduced in Section \ref{secFlagJet} above) and that of an involutive plane (introduced in Section \ref{secInvPlanes} above) are tightly interrelated in view of two simple facts. The first is that an element $r \in\II_{n-1}(\C^k)$ can be seen as a \virg{relative} flag of  planes in $J^k$, in a sense elucidated by     Lemma \ref{lemmaStupido} below.
\begin{lemma}\label{lemmaStupido}
Map
\begin{eqnarray*}
\II_{n-1}(\C^k) & \stackrel{q^k}{\longrightarrow} & J^{k-1}(E,n,n-1),\\
\C^k_\theta\supseteq r &\longmapsto & (R_\theta, r_{k-1}),
\end{eqnarray*}
is a  bundle.
\end{lemma}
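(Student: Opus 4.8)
The plan is to recognise $q^k$ as an affine bundle by writing it, in adapted charts, as a coordinate projection; once the charts are fixed, everything reduces to bookkeeping. First I would verify that $q^k$ lands in the flag jet bundle and is smooth. As $r\in\II_{n-1}(\C^k)$ is \emph{horizontal}, by the definition of $\J^1$ one has $r\cap\ker\pi_{k,k-1\ast}=0$, so $\pi_{k,k-1\ast}$ is injective on $r$ and $r_{k-1}:=\pi_{k,k-1\ast}(r)$ is again an $(n-1)$--plane. The inclusion $r\subseteq\C^k_\theta$ combined with $\pi_{k,k-1\ast}(\C^k_\theta)=R_\theta$ yields the flag inclusion $r_{k-1}\subseteq R_\theta$, and $R_\theta$ is the tautological (hence involutive) $R$--plane of a genuine jet; therefore $(R_\theta,r_{k-1})$ meets the defining conditions \REF{defFlagJets} of a flag jet. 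Smoothness of $q^k$ is immediate, since both $\theta\mapsto R_\theta$ and $r\mapsto\pi_{k,k-1\ast}(r)$ are smooth.

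Next I would pass to coordinates. On the target I would use the system \REF{eqCoordinateGiusteSuDAtiFiniti}, and on $\II_{n-1}(\C^k)\subseteq\J^1(J^k(E,n),n-1)$ I would present the plane $r$ by total--derivative generators of the shape \REF{eqPianoPiccolo}, so that the coordinates of $r$ split into those of its base jet $\theta$, the tilt $t_a$ of the plane, and the top--order ``purely normal'' components that the projection $\pi_{k,k-1\ast}$ annihilates. Reading $q^k$ off in these charts, it retains the coordinates of $\theta$ and the $t_a$ and simply forgets those top--order components. This displays $q^k$, over each chart, as the projection of a product onto the first factor, i.e. a trivial affine bundle; patching over a covering of the base gives the global bundle structure. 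Surjectivity is evident in the same charts, and can be proved invariantly by lifting an arbitrary flag jet $(R,\rho)$ to an involutive horizontal plane through the carrier--jet map $j_k(s)_\ast$ of Example \ref{exCarrierJet} (as in Lemma \ref{lemLocaleBellino}).

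The main obstacle is to confirm that the involutivity constraint carves out fibres of \emph{constant} rank, so that the forgotten components are genuinely free parameters and $q^k$ is locally trivial rather than just a surjective submersion. Among all horizontal lifts of a fixed $\rho\subseteq R_\theta$ into $\C^k_\theta$ --- an affine space modelled on $\Hom(\rho,\ker\pi_{k,k-1\ast})$ --- one must impose $\Pi|_r=0$, that is equations \REF{eqFunzLocDistrib2}. By Remark \ref{RemCoordInvol} these are algebraic consequences of \REF{eqEquazioniCoordinateDatiFinitiEQ}, hence affine--linear in the fibre variables with rank independent of the base point, which is exactly what makes the fibres affine spaces of constant dimension. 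Establishing this constant--rank property --- equivalently, pinning down which top--order components survive --- is the only step that is not purely formal.
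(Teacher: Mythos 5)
Your argument is correct and follows essentially the same route as the paper: well--definedness of the flag $(R_\theta,r_{k-1})$ from horizontality of $r$, and the bundle structure read off by comparing the coordinate systems \REF{eqCoordPianPicInv} and \REF{eqCoordinateGiusteSuDAtiFiniti}, so that the top--order components $(u^\alpha_{A,l})_a$ serve as fiber coordinates --- which is precisely Remark \ref{remCoordQuKappa}, the ingredient the paper's one--line proof cites for smoothness. Your closing discussion of surjectivity and of the constant--rank/local--triviality of the fibres is more explicit than anything the paper records, but it is a refinement of the same coordinate argument rather than a different approach.
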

\begin{proof}
  By definition, $r\subseteq \C_\theta$ is horizontal, so $r_{k-1}$ is an $(n-1)$--dimensional subspace of $R_\theta$, i.e., $r$ determine the flag $(R_\theta,r_r)$ on $J^{k-1}$.  Smoothness follows from Remark \ref{remCoordQuKappa}.
\end{proof}
The second is that  a flag projects over the space of involutive small planes, as shown by Lemma \ref{lemmaStupido2} below.
 
 \begin{lemma}\label{lemmaStupido2}
The canonical bundle
\begin{equation*}
J^k(E,n)\times_{J^{k-1}(E,n)} \J^1(J^{k-1}(E,n),n-1)   {\longrightarrow}    \J^1(J^{k-1}(E,n),n-1) 
\end{equation*}
restricts to a bundle
\begin{equation}
J^k(E,n,n-1)\stackrel{n^k}{\longrightarrow} \II_{n-1}(\C^{k-1}).\label{eqProBandiereSuInvolutive}
\end{equation}

\end{lemma}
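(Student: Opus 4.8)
The plan is to show first that $n^k$ really lands in $\II_{n-1}(\C^{k-1})$ and is the restriction of a smooth bundle projection, and then to recognise it, in the coordinates \REF{eqCoordinateGiusteSuDAtiFinitiBIS}, as a trivial forgetful projection.

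For the first step, note that the canonical projection in the statement is the projection of the fibred product $X=J^k(E,n)\times_{J^{k-1}(E,n)}\J^1(J^{k-1}(E,n),n-1)$ onto its second factor, so it sends $\Theta=(R_\Theta,r_\Theta)$ to $r_\Theta$; in particular $r_\Theta$ is already horizontal, being by construction an element of $\J^1(J^{k-1}(E,n),n-1)$. By \REF{defFlagJets} one has $r_\Theta\subseteq R_\Theta$, and the first factor $J^k(E,n)$, seen inside $J^1(J^{k-1}(E,n),n)$, equals $\II_n(\C^{k-1})$ (the Example right after Example \ref{exlpRpianiSing}); hence $R_\Theta$ is an $n$--dimensional involutive plane of $\C^{k-1}$, i.e. $\Omega|_{R_\Theta}=0$ and $\Pi|_{R_\Theta}=0$. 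Since these conditions are hereditary for linear subspaces (Remark \ref{remEreditarieta}), the subplane $r_\Theta$ also satisfies $\Omega|_{r_\Theta}=0$ and $\Pi|_{r_\Theta}=0$, so that $r_\Theta\in\II_{n-1}(\C^{k-1})$. Thus \REF{eqProBandiereSuInvolutive} is the well--defined restriction of the smooth projection $X\to\J^1(J^{k-1}(E,n),n-1)$ to the smooth sub--bundle $J^k(E,n,n-1)$ and to the submanifold $\II_{n-1}(\C^{k-1})$.

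To see that it is a bundle I would compute in the coordinates \REF{eqCoordinateGiusteSuDAtiFinitiBIS}, introduced (Remark \ref{remCoordinateDatiFinitiAlt}) for exactly this kind of purpose. Substituting \REF{eqEquazioniCoordinateDatiFinitiEQ} into the generators \REF{eqPianoPiccolo} of $r_\Theta$, the coefficient of each top vertical $\partial_{u^\alpha_{A,l}}$ with $|A|+l=k-1$ is literally the coordinate $(u^\alpha_{A,l})_a$, whereas for $|A|+l\leq k-2$ the coefficient $(u^\alpha_{A,l})_a=u^\alpha_{Aa,l}+t_au^\alpha_{A,l+1}$ is a function of the base coordinates $u^\alpha_{A,l}$ ($|A|+l\leq k-1$) and of $t_a$ only. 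Consequently the coordinate $u^\alpha_{\O,k}$ does not enter the description of $r_\Theta$ at all, and $n^k$ reads
\begin{equation*}
\bigl(x^a,t,u^\alpha_{A,l},u^\alpha_{\O,k},t_a,(u^\alpha_{A,l})_a\bigr)\longmapsto\bigl(x^a,t,u^\alpha_{A,l},t_a,(u^\alpha_{A,l})_a\bigr),
\end{equation*}
the target coordinates being precisely those of $\II_{n-1}(\C^{k-1})$ provided by Remark \ref{remCoordQuKappa} (with $k$ replaced by $k-1$). This is a trivial projection whose fibre is the affine space of the $m$ coordinates $u^\alpha_{\O,k}$, $\alpha=1,\dots,m$; covering $\II_{n-1}(\C^{k-1})$ by charts of this type exhibits $n^k$ as a bundle, with fibre $\R^m$.

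The point requiring care is exactly the vanishing of the dependence on $u^\alpha_{\O,k}$. It fails in the coordinates \REF{eqCoordinateGiusteSuDAtiFiniti}, where the top coefficients $(u^\alpha_{A,l})_a$ are not coordinates but the combinations $u^\alpha_{Aa,l}+t_au^\alpha_{A,l+1}$, and these genuinely involve $u^\alpha_{\O,k}$ (for instance $(u^\alpha_{\O,k-1})_a=u^\alpha_{1_a,k-1}+t_au^\alpha_{\O,k}$). Only after passing to \REF{eqCoordinateGiusteSuDAtiFinitiBIS}, in which those combinations are promoted to independent coordinates, does $n^k$ become the clean forgetful projection above; geometrically, $u^\alpha_{\O,k}$ is the top purely--$t$ (normal) derivative, which is recorded by the large plane $R_\Theta$ but is invisible to the small plane $r_\Theta$, and it is precisely this datum that parametrises the fibre.
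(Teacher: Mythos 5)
Your proof is correct and takes essentially the same route as the paper, whose entire argument is the one line ``directly from Remark \ref{remEreditarieta}'', i.e.\ heredity of the involutivity conditions applied to $r_\Theta\subseteq R_\Theta$ together with the horizontality of $r_\Theta$ built into $\J^1$. The coordinate computation in your second and third paragraphs is extra detail that the paper defers to the subsequent remark on coordinates on $\II_{n-1}(\C^{k-1})$ and to Corollary \ref{corSimpaticoCheLegaVariFibratiNormali}, and it is sound, with the one caveat that the global fibre is $\R^m$ only for $k\geq 2$; for $k=1$ it is $\R\P^m$ (Remark \ref{remNormBundleUno}), though your local chart description remains valid.
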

\begin{proof}
 Directly from Remark \ref{remEreditarieta}.
\end{proof}
 
 As we shall see, taking the inverse limit, \virg{relative} becomes \virg{absolute}, and the two sides of \REF{eqProBandiereSuInvolutive} will coincide (Theorem \ref{thFagSonoCoseInvolutive}). 
The key tool is provided by   diagram \REF{eqDiagrammaLibroAperto} below.
\begin{lemma}\label{lemDefProjFlag}
 The bundle 
 \begin{eqnarray}
J^k(E,n)\times_{J^{k-1}(E,n)} \J^1(J^{k-1}(E,n),n-1) & \longrightarrow & J^{k-1}(E,n)\times_{J^{k-2}(E,n)} \J^1(J^{k-2}(E,n),n-1),\nonumber\\
\Theta=(R_\Theta, r_\Theta)&\longmapsto &\Theta_{k-1}\df(( R_\Theta)_{k-1}, ( r_\Theta)_{k-1}),\nonumber
\end{eqnarray}
restricts to a bundle  
\begin{equation}
J^k(E,n,n-1)\stackrel{\pi^\textrm{\normalfont flag}_{k,k-1}}{\longrightarrow }J^{k-1}(E,n,n-1).
\end{equation}

\end{lemma}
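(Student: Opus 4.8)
The plan is to realise the top map as the pushforward by $\pi_{k-1,k-2}$ on both factors, to check that this operation preserves the flag inclusion, and then to read off the bundle structure in the coordinates of Remark \ref{remCoordinateDatiFiniti}. First I would describe the ambient map $P\colon X\to X'$ between the two fibered products explicitly. On the first factor it is the standard projection $\pi_{k,k-1}\colon J^k(E,n)\to J^{k-1}(E,n)$; on the second factor it is the jet map $(\pi_{k-1,k-2})_\ast\colon \J^1(J^{k-1}(E,n),n-1)\to \J^1(J^{k-2}(E,n),n-1)$, which is well defined and smooth by Definition \ref{defJetMap} (the elements in play being $\pi_{k-1,k-2}$--mappable, cf.\ Example \ref{exCarrierJetGASANTE}). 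At the level of planes both operations amount to applying the linear map $\pi_{k-1,k-2\ast}$: the $n$--plane $R_\Theta\subseteq T_{\theta_{k-1}}J^{k-1}(E,n)$ is sent to $(R_\Theta)_{k-1}\df\pi_{k-1,k-2\ast}(R_\Theta)$, and likewise for the small plane $r_\Theta$. Since $P$ intertwines the two base projections down to $J^{k-2}(E,n)$, it respects the fibered--product structures and is therefore a well--defined smooth map, where $X'$ denotes the target fibered product.

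The heart of the matter is to verify that $P$ carries $J^k(E,n,n-1)$ into $J^{k-1}(E,n,n-1)$. Let $\Theta=(R_\Theta,r_\Theta)\in J^k(E,n,n-1)$, so that $r_\Theta\subseteq R_\Theta$ by the defining condition \REF{defFlagJets}. The pushforward of a linear map preserves inclusions of subspaces, hence $\pi_{k-1,k-2\ast}(r_\Theta)\subseteq \pi_{k-1,k-2\ast}(R_\Theta)$, that is $(r_\Theta)_{k-1}\subseteq (R_\Theta)_{k-1}$, which is precisely the flag condition for $\Theta_{k-1}$. The one point requiring care is that the two projected planes retain their dimensions $n$ and $n-1$, so that $\Theta_{k-1}$ genuinely lands in $X'$, and then in the flag jet bundle. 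This follows from the structure of the $R$--distribution: from the explicit generators \REF{eqPerReferee2}, the restriction of $\pi_{k-1,k-2\ast}$ to an $R$--plane is an isomorphism onto the lower--order $R$--plane (the surviving $\partial_l$--components being linearly independent), so $R_\Theta$ keeps dimension $n$; and since $r_\Theta\subseteq R_\Theta$, the same map is a fortiori injective on $r_\Theta$, which therefore keeps dimension $n-1$.

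Finally I would read off the bundle claim in coordinates. In the chart \REF{eqCoordinateGiusteSuDAtiFiniti} on $J^k(E,n,n-1)$ the restricted map $\pi^\textrm{flag}_{k,k-1}$ is
\begin{equation*}
\left(x^a,t,\underset{|A|+l\leq k}{u^\alpha_{A,l}},t_a\right)\longmapsto \left(x^a,t,\underset{|A|+l\leq k-1}{u^\alpha_{A,l}},t_a\right),
\end{equation*}
i.e.\ it simply forgets the top--order fibre coordinates $u^\alpha_{A,l}$ with $|A|+l=k$ while keeping $x^a$, $t$ and the $t_a$. This is manifestly a surjective submersion with affine fibres, hence a bundle, which settles the statement.

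The only genuinely nontrivial step is the dimension count of the middle paragraph; everything else is bookkeeping. It is worth noting that the coordinate description of the last paragraph re--confirms both smoothness and the bundle property independently of the $R$--distribution argument, so the two parts of the proof cross--check each other.
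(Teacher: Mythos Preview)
The paper states this lemma without proof, so there is nothing to compare against; your argument is correct and supplies exactly the details the paper omits. The description of the ambient map as $\pi_{k,k-1}$ on the first factor and $(\pi_{k-1,k-2})_\ast$ on the second, the preservation of the flag inclusion under pushforward, and the coordinate expression of $\pi^{\textrm{flag}}_{k,k-1}$ in the chart \REF{eqCoordinateGiusteSuDAtiFiniti} are all on target.

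One small imprecision: when you justify that the elements of $\J^1(J^{k-1}(E,n),n-1)$ are $\pi_{k-1,k-2}$--mappable you cite Example~\ref{exCarrierJetGASANTE}, but that example concerns $n$--planes in $J^k(E,n)$, not the $(n-1)$--planes you need. The correct justification is the one you effectively give in the dimension--count paragraph: a horizontal $(n-1)$--plane $r_\Theta$ projects injectively all the way down to $E_{-1}$, hence in particular injectively to $J^{k-2}(E,n)$, so $r_\Theta\cap\ker\pi_{k-1,k-2\ast}=0$. This is a cosmetic fix, not a gap.
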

Let $(\pi_{k-1,k-2})_\ast$ be the jet map of $\pi_{k-1,k-2}$ (see Definition \ref{defJetMap}).
\begin{lemma}\label{lemProInv}
 The bundle
 \begin{equation}
 \J^1(J^{k-1}(E,n),n-1)\stackrel{(\pi_{k-1,k-2})_\ast }{ \longrightarrow} \J^1(J^{k-2}(E,n),n-1)\nonumber
\end{equation}
restricts to a bundle
\begin{equation}
\II_{n-1}(\C^{k-1})\stackrel{{\pi^\textrm{\normalfont inv.}_{k,k-1}}}{\longrightarrow } \II_{n-1}(\C^{k-2}).
\end{equation}

\end{lemma}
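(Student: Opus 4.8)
The plan is to prove that the jet map $(\pi_{k-1,k-2})_\ast$ already sends horizontal involutive $(n-1)$--planes of $\C^{k-1}$ to horizontal involutive $(n-1)$--planes of $\C^{k-2}$, so that its restriction to $\II_{n-1}(\C^{k-1})$ is automatically well defined and takes values in $\II_{n-1}(\C^{k-2})$; the bundle property is then a matter of coordinates. Throughout write $\phi\df\pi_{k-1,k-2}$, and let $\{\omega^A\}$ and $\{\eta^B\}$ be local sets of contact forms cutting out $\C^{k-1}$ and $\C^{k-2}$ respectively, so that, by the criterion of Example \ref{exCoordInvEqqq}, a plane $R$ is involutive precisely when $\omega^A|_R=0$ and $d\omega^A|_R=0$ for all $A$ (and similarly for the $\eta^B$).

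The single structural input I would isolate first is the naturality of the contact system: $\phi$ is a morphism of distributions, $\phi_\ast(\C^{k-1})\subseteq\C^{k-2}$, equivalently $\phi^\ast\eta^B=\sum_A g^B_A\,\omega^A$ for suitable local functions $g^B_A$. In the coordinates of Remark \ref{RemCoordInvol} this is immediate, since $\phi$ merely forgets the top--order fibre variables and carries each contact form of $J^{k-2}(E,n)$ to a contact form of $J^{k-1}(E,n)$.

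Next I would dispose of horizontality and dimension. A horizontal plane $r$ in $J^{k-1}(E,n)$ projects isomorphically onto its image in $E_{-1}$; as $\ker\phi_\ast$ is $\phi$--vertical, hence vertical over $E_{-1}$, we get $r\cap\ker\phi_\ast=0$, so $r$ is $\phi$--mappable, $\phi_\ast(r)$ has dimension $n-1$, and, the projection to $E_{-1}$ factoring through $\phi$, it is again horizontal. To see that $\phi_\ast(r)$ is involutive, fix $X,Y\in r$, so that $\omega^A(X)=\omega^A(Y)=0$ and $d\omega^A(X,Y)=0$ for every $A$, and compute
\[
\eta^B(\phi_\ast X)=(\phi^\ast\eta^B)(X)=\textstyle\sum_A g^B_A\,\omega^A(X)=0,
\]
\[
d\eta^B(\phi_\ast X,\phi_\ast Y)=d(\phi^\ast\eta^B)(X,Y)=\textstyle\sum_A\bigl(dg^B_A\wedge\omega^A+g^B_A\,d\omega^A\bigr)(X,Y)=0,
\]
the last equality because the wedge terms carry a factor $\omega^A(X)$ or $\omega^A(Y)$ while the remaining terms carry a factor $d\omega^A(X,Y)$. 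By Example \ref{exCoordInvEqqq} this says exactly that $\phi_\ast(r)\in\II_{n-1}(\C^{k-2})$, i.e.\ that \REF{eqEQ-fond} holds for the image plane; hence $\pi^{\textrm{\normalfont inv.}}_{k,k-1}$ is well defined.

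Finally, that the resulting map is a bundle I would read off the same coordinates: $\phi$ drops the top--order jet coordinates while leaving the plane coordinates of $\II_{n-1}$ unconstrained, so $\pi^{\textrm{\normalfont inv.}}_{k,k-1}$ is a surjective submersion with the expected affine fibres, exactly as for the companion projection $\pi^{\textrm{\normalfont flag}}_{k,k-1}$ of Lemma \ref{lemDefProjFlag}. The only genuinely non--formal step is the curvature identity in the second display; everything else is naturality and a dimension count. I expect the main obstacle to be purely bookkeeping: checking that $\phi^\ast\eta^B$ indeed lies in the module generated by the $\omega^A$ with the correct index ranges, which I would settle once and for all in the coordinates of Remark \ref{RemCoordInvol}.
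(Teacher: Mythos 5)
Your argument is correct, and it is in fact more than the paper offers: Lemma \ref{lemProInv} is stated there without any proof, its companion Lemma \ref{lemmaStupido2} being justified only by the heredity of involutivity (Remark \ref{remEreditarieta}), and the implicit coordinate justification being that the defining equations \REF{eqEquazioniCoordinateDatiFinitiEQ} of $\II_{n-1}(\C^{k-2})$ (those with $|A|+l\leq k-2$) form a subset of those of $\II_{n-1}(\C^{k-1})$ (those with $|A|+l\leq k-1$), so that the forgetful projection manifestly restricts. Your intrinsic route via the naturality of the contact system, $\phi^\ast\eta^B=\sum_A g^B_A\,\omega^A$, together with the computation of $d(\phi^\ast\eta^B)$ on a pair of vectors of an involutive plane, is a correct coordinate--free substitute for that check, and it is consistent with Example \ref{exCoordInvEqqq} and Remark \ref{RemCoordInvol}. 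Two points in your write--up deserve credit as genuinely needed and not merely formal: first, the observation that a plane horizontal over $E_{-1}$ is automatically $\pi_{k-1,k-2}$--mappable because $\ker(\pi_{k-1,k-2})_\ast$ is vertical over $E_{-1}$, which is exactly what makes the restriction land inside the domain of the jet map and preserves both the dimension and the horizontality of the image; the paper glosses over this. Second, the surjective--submersion claim at the end: your phrasing that the target plane coordinates are left \virg{unconstrained} is slightly off, since the $(u^\alpha_{A',l'})_a$ with $|A'|+l'=k-2$ of the image are determined by \REF{eqEquazioniCoordinateDatiFinitiEQ} from the retained $t_a$ and the order--$(k-1)$ jet coordinates of the source; but this only changes the bookkeeping, not the conclusion, and is at the same level of rigor as the paper's own \virg{easily checked in coordinates}.
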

\begin{remark}\label{remStupidooo}
 The bundle of  $J^k(E,n)\times_{J^{k-1}(E,n)} \J^1(J^{k-1}(E,n),n-1)$ over the first factor determines a bundle
 \begin{equation}
p^k:J^k(E,n,n-1)\longrightarrow  J^k(E,n).
\end{equation}

\end{remark}
In view of the above lemmas, it makes sense to construct below   diagram \REF{eqDiagrammaLibroAperto}, where unlabeled arrow are canonical embeddings/bundles.\par
\hspace{-2.5cm}{\parbox{15cm}{\begin{equation}\label{eqDiagrammaLibroAperto}
  \xymatrix{
 & J^k(E,n,n-1)\ar@{->>}[dd]^{\pi^\textrm{flag}_{k,k-1}} \ar[dr]^{n^k}\ar[dl]_{p^k}\ar@{^(->}[r]&J^k(E,n)\times_{J^{k-1}(E,n)} \J^1(J^{k-1}(E,n),n-1)\ar@{->>}[dr]&\\
 J^k(E,n)\ar@{->>}[dd]^{\pi_{k,k-1}} & & \II_{n-1}(\C^{k-1})\ar@{->>}[dd]^{\pi^\textrm{inv.}_{k,k-1}}\ar@{^(->}[r]\ar[dl]^{q^k}&   \J^1(J^{k-1}(E,n),n-1)\ar@{..>}[ddlll]\ar@{->>}[dd]^{(\pi_{k-1,k-2})_\ast} \\
  & J^{k-1}(E,n,n-1) \ar[dr]^{n^{k-1}}\ar[dl]_{p^{k-1}}&& \\
 J^{k-1}(E,n) & & \II_{n-1}(\C^{k-2})\ar@{^(->}[r]  & \J^1(J^{k-2}(E,n),n-1) \\
 }
\end{equation}}}
\begin{corollary}\label{corCorollarioEgregio}
Diagram \REF{eqDiagrammaLibroAperto}  
 is commutative.
\end{corollary}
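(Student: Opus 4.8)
The plan is to read \REF{eqDiagrammaLibroAperto} as the pasting of a few elementary squares and triangles, to verify each one separately, and to conclude by composition. The guiding observation is that, apart from the arrow $q^k$ that reads off a small plane, \emph{every} arrow in the diagram is the restriction of one of the canonical projections of the two ambient \virg{product} objects
\begin{equation*}
J^k(E,n)\times_{J^{k-1}(E,n)}\J^1(J^{k-1}(E,n),n-1)\quad\textrm{and}\quad \J^1(J^{k-1}(E,n),n-1),
\end{equation*}
together with their order $k-1$ analogues, connected vertically by $\pi_{k,k-1}$ and by the jet map $(\pi_{k-1,k-2})_\ast$. At the level of these ambient objects the relevant square commutes for the trivial reason that projection onto a factor commutes with a product of base maps; since all our inner arrows are restrictions of these ambient arrows (Lemmas \ref{lemDefProjFlag}, \ref{lemProInv}, \ref{lemmaStupido2} and Remark \ref{remStupidooo}), they commute \emph{a fortiori} on the subsets where they are defined.

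Concretely, I would first dispose of the two vertical squares. For the left square I must check $p^{k-1}\circ\pi^{\textrm{flag}}_{k,k-1}=\pi_{k,k-1}\circ p^k$; writing $\Theta=(R_\Theta,r_\Theta)$, both sides send $\Theta$ to the point of $J^{k-1}(E,n)$ underlying $(R_\Theta)_{k-1}$, so this is exactly compatibility of the first--factor projection with the product map $\pi_{k,k-1}\times(\pi_{k-1,k-2})_\ast$ that defines $\pi^{\textrm{flag}}_{k,k-1}$ (Lemma \ref{lemDefProjFlag}). For the right square I must check $n^{k-1}\circ\pi^{\textrm{flag}}_{k,k-1}=\pi^{\textrm{inv.}}_{k,k-1}\circ n^k$; here both compositions send $\Theta$ to $(\pi_{k-1,k-2})_\ast(r_\Theta)$, since $n^k$ is the second--factor projection $\Theta\mapsto r_\Theta$ (Lemma \ref{lemmaStupido2}) and $\pi^{\textrm{inv.}}_{k,k-1}$ is the restriction of $(\pi_{k-1,k-2})_\ast$ (Lemma \ref{lemProInv}), while $n^{k-1}$ again extracts the small plane of the projected flag. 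The two top/bottom triangles through the embeddings into the product objects commute by the very definition of the embedding $\Theta\mapsto(p^k(\Theta),n^k(\Theta))$ followed by the factor projections.

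The one piece that is not purely formal, and which I would treat with care, is the triangle attached to $q^k$, because $q^k$ is the only arrow that is \emph{not} a restriction of a factor projection: by Lemma \ref{lemmaStupido} it blends the tautological plane $R_\theta$ of the foot--point with the projected small plane. I must verify both $n^{k-1}\circ q^k=\pi^{\textrm{inv.}}_{k,k-1}$ and that $p^{k-1}\circ q^k$ equals the dotted foot--point map $\J^1(J^{k-1}(E,n),n-1)\to J^{k-1}(E,n)$ restricted to $\II_{n-1}(\C^{k-1})$. Unwinding Lemma \ref{lemmaStupido}, $q^k$ sends the involutive plane $r\subseteq\C^{k-1}_\theta$ to the flag $(R_\theta,\,(\pi_{k-1,k-2})_\ast r)$; reading off its small plane gives exactly $(\pi_{k-1,k-2})_\ast r=\pi^{\textrm{inv.}}_{k,k-1}(r)$, while reading off the big plane returns the point $\theta$, which is the foot--point of $r$. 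The only substantive fact needed here is that the small plane so produced is again \emph{involutive}, so that it does land in $\II_{n-1}(\C^{k-2})$; this is granted by the heredity of involutivity (Remark \ref{remEreditarieta}), and in case of doubt it is transparent in the coordinates of Remark \ref{remCoordinateDatiFiniti}, where $q^k$ amounts to the substitution \REF{eqEquazioniCoordinateDatiFinitiEQ}.

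Once every elementary square and triangle is seen to commute, the commutativity of the full diagram \REF{eqDiagrammaLibroAperto} follows by pasting. I expect the main obstacle to be purely bookkeeping—keeping the jet orders straight across the two floors, matching $\C^{k-1}$ with $J^{k-1}(E,n,n-1)$ and $\C^{k-2}$ with $J^{k-2}(E,n,n-1)$—rather than any geometric difficulty, since all the nontrivial content (that the restrictions are well defined and land in the asserted subbundles) has already been established in Lemmas \ref{lemmaStupido}--\ref{lemProInv}.
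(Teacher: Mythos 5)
Your proposal is correct and follows essentially the same route as the paper's proof: decompose \REF{eqDiagrammaLibroAperto} into its elementary squares and triangles, observe that every arrow except $q^k$ is a restriction of a canonical factor projection (which is exactly the content of Lemmas \ref{lemDefProjFlag}, \ref{lemProInv}, \ref{lemmaStupido2} and Remark \ref{remStupidooo}), and treat separately the compositions through $q^k$, in particular identifying $p^{k-1}\circ q^k$ with the restriction of the foot--point projection $\pi_{1,0}$ via Lemma \ref{lemmaStupido}. You merely make explicit the element--chasing that the paper leaves to the citations, and your appeal to Remark \ref{remEreditarieta} for the target of the lower triangle is consistent with the paper's use of Lemma \ref{lemProInv}.
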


\begin{proof}
 Commutativity of the rightmost square follows from Lemma \ref{lemProInv}, much as   commutativity of the upper parallelepiped is a consequence of Lemma \ref{lemmaStupido2}. Commutativity of the leftmost parallelepiped follows from Lemma \ref{lemDefProjFlag} and Example \ref{exCarrierJetGASANTE}.\par
 Commutativity of the central triangle follows from Lemma \ref{lemmaStupido} and Lemma \ref{lemDefProjFlag}, while Lemma \ref{lemProInv}, together with Lemma \ref{lemmaStupido} and Lemma \ref{lemmaStupido2}, guarantees commutativity of the lower triangle.\par
 Finally, by applying  the composition $p^{k-1}\circ q^k$ to an element $r \in\II_{n-1}(\C^k)$, in view of Lemma \ref{lemmaStupido} and Remark \ref{remStupidooo}, one gets the unique $\theta\in J^{k-1}(E,n)$ such that $r\subseteq\C^{k-1}_\theta$, i.e., $\pi_{1,0}(r)$. This proves that $p^{k-1}\circ q^k$ is the restriction of $\pi_{1,0}: \J^1(J^{k-1}(E,n),n-1)\longrightarrow J^{k-1}(E,n)$.
\end{proof}

\begin{remark}[Coordinates on $\II_{n-1}(\C^{k-1})$]
A point 
\begin{equation}
\theta=(x^a,t,\underset{|A|+l \leq k-1}{u^\alpha_{A,l}},t_a, \underset{|A'|+l' \leq k-1}{(u^\alpha_{A',l'})_a})\in J^1(J^{k-1}(E,n),n-1)
\end{equation}
determines the small plane $r_\theta$ (see \REF{eqPianoPiccolo}), which, in view of Remark \ref{RemCoordInvol}, is involutive iff \REF{eqEquazioniCoordinateDatiFinitiEQ} are satisfied. So, 
\begin{equation}\label{eqCoordPianPicInv}
x^a,t,\underset{|A|+l \leq k-1}{u^\alpha_{A,l}},t_a, \underset{|A'|+l' = k-1}{(u^\alpha_{A',l'})_a}
\end{equation}
can be taken as coordinates on $\II_{n-1}(\C^{k-1})$. Hence, by comparing \REF{eqCoordPianPicInv} with \REF{eqCoordinateGiusteSuDAtiFinitiBIS}, one sees that the  $u^\alpha_{\O,k}$'s are fiber coordinates of $n^k$.  
Observe also that 
\begin{equation}\label{eqFibraEnneInCoordinate}
n^k_{\theta}=\{\Theta\in J^k(E,n)\mid R_\Theta\supset r_\theta\}
\end{equation}
is a subset of  $\pi_{k,k-1}^{-1}(\theta_0)$, which is parametrized by top derivatives, i.e. coordinates 
\begin{equation}\label{eqCoordFibraJet}
u^\alpha_{A,l},\quad |A|+l=k,
\end{equation}
and, thanks again to \REF{eqEquazioniCoordinateDatiFinitiEQ}, any $u^\alpha_{A,l}$ in \REF{eqCoordFibraJet} with $A\neq\O$ can be expressed in terms of $u^\alpha_{A',l+1}$, with $|A'|=|A|-1$, so that the $u^\alpha_{\O,k}$'s must be coordinates along $n^k$. 
\end{remark}

\begin{remark}[Fiber coordinates of $q^k$]\label{remCoordQuKappa}
Comparing \REF{eqCoordPianPicInv} with \REF{eqCoordinateGiusteSuDAtiFiniti}, one sees that 

\begin{equation}
(u_{A,l})_a^\alpha,\quad |A|+l=k-1,
\end{equation}
are coordinates along the fibers of $q^k$
 
\end{remark}

An easy consequence of Corollary \ref{corCorollarioEgregio} is Theorem \ref{thFagSonoCoseInvolutive} below, which establishes that the tower of flag jets carries the same information as the tower of equations of involutive small planes, i.e.,   that the tangent space to a small leaf of $J^\infty(E,n)$ is the same as a flag.

\begin{theorem}\label{thFagSonoCoseInvolutive}
\begin{equation}
\lim_{\leftarrow}\pi^{\textrm{\normalfont flag}}_{k,k-1}\cong \I_{n-1}(\C).
\end{equation}
\end{theorem}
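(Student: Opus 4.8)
The plan is to read the isomorphism off the commutativity of diagram \REF{eqDiagrammaLibroAperto}, i.e.\ Corollary \ref{corCorollarioEgregio}, interpreting it as an \emph{interleaving} of the flag tower $\{J^k(E,n,n-1),\,\pi^{\textrm{flag}}_{k,k-1}\}$ and the involutive tower $\{\II_{n-1}(\C^{k-1}),\,\pi^{\textrm{inv.}}_{k,k-1}\}$. The two ingredients I would isolate are the identities furnished by the central and the lower triangle of \REF{eqDiagrammaLibroAperto}:
\[
q^k\circ n^k=\pi^{\textrm{flag}}_{k,k-1},\qquad n^{k-1}\circ q^k=\pi^{\textrm{inv.}}_{k,k-1},
\]
valid for every $k$. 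They say that each bonding map of one tower factors through a single level of the other, with $\{n^k\}$ and $\{q^k\}$ as the weaving maps.

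First I would note that $\{n^k\}$ is a morphism of inverse systems: combining the two identities gives $\pi^{\textrm{inv.}}_{k,k-1}\circ n^k=n^{k-1}\circ q^k\circ n^k=n^{k-1}\circ\pi^{\textrm{flag}}_{k,k-1}$, which is exactly the commutative square of a tower morphism (equivalently, the upper parallelepiped of \REF{eqDiagrammaLibroAperto}). Hence $\{n^k\}$ induces $N\df\varprojlim n^k$ from $\varprojlim J^k(E,n,n-1)=\varprojlim\pi^{\textrm{flag}}_{k,k-1}$ to $\varprojlim\II_{n-1}(\C^{k-1})$. Symmetrically, the level--shifting family $\{q^k\}$ is compatible with the two families of bonding maps—one checks $\pi^{\textrm{flag}}_{k-1,k-2}\circ q^k=q^{k-1}\circ\pi^{\textrm{inv.}}_{k,k-1}$ by substituting the same two identities—so, an inverse limit being insensitive to an index shift, it induces a map $Q$ in the opposite direction.

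Next I would verify that $N$ and $Q$ are mutually inverse, which is immediate on threads. If $(x_k)$ is a thread of the flag tower, then the $(k-1)\St$ entry of $Q(N((x_k)))$ is $q^k(n^k(x_k))=\pi^{\textrm{flag}}_{k,k-1}(x_k)=x_{k-1}$, so $Q\circ N=\id$; dually, if $(w_k)$ is a thread of the involutive tower, then the $k\Th$ entry of $N(Q((w_k)))$ is $n^k(q^{k+1}(w_{k+1}))=\pi^{\textrm{inv.}}_{k+1,k}(w_{k+1})=w_k$, so $N\circ Q=\id$. This already yields $\varprojlim\pi^{\textrm{flag}}_{k,k-1}\cong\varprojlim\II_{n-1}(\C^{k-1})$.

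The last step, and the only one that is not purely formal, is to identify the right--hand limit with $\I_{n-1}(\C)$. After reindexing, $\varprojlim_k\II_{n-1}(\C^{k-1})=\varprojlim_j\II_{n-1}(\C^j)$; since the bonding maps are the restrictions of the jet maps $(\pi_{j,j-1})_\ast$ and involutivity is a closed, pointwise condition stable under the limit, a thread of this tower is precisely a horizontal involutive $(n-1)$--plane of the Cartan distribution $\C=\varprojlim_j\C^j$ over a point of $J^\infty(E,n)$. Finally, on $J^\infty(E,n)$ the distribution $\C$ is itself horizontal (it projects isomorphically onto $TE_{-1}$), so every involutive $(n-1)$--plane of $\C$ is automatically horizontal and $\II_{n-1}(\C)=\I_{n-1}(\C)$, giving the claim. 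I expect the main obstacle to lie exactly here: the formal interleaving is forced by Corollary \ref{corCorollarioEgregio}, whereas the care goes into commuting the operation $\I_{n-1}(-)$ with the inverse limit and checking that the horizontal constraint becomes vacuous in the pro--finite limit.
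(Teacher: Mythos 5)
Your argument is exactly the one the paper intends: it states the theorem as ``an easy consequence of Corollary \ref{corCorollarioEgregio}'' and gives no further proof, and your interleaving of the two towers via $n^k$ and $q^k$ using the triangle identities $\pi^{\textrm{flag}}_{k,k-1}=q^k\circ n^k$ and $\pi^{\textrm{inv.}}_{k,k-1}=n^{k-1}\circ q^k$ is the standard way to extract the pro--isomorphism from that diagram. You also correctly supply the one point the paper glosses over, namely that the horizontality constraint distinguishing $\II_{n-1}$ from $\I_{n-1}$ becomes vacuous on $J^\infty(E,n)$ because $\C$ is itself $n$--dimensional and horizontal there.
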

\section{Normal bundles and finite--order Cauchy data}
Importance of Theorem  \ref{thFagSonoCoseInvolutive}  for the geometrical theory of Cauchy data is twofold. First, it shows that the  flag jet construction is just an alternative description of the $1\St$ order nonlinear PDE $\I_{n-1}(\C)$, which    can be discarded if one is merely interested in the  secondary manifold $\I^{(\infty)}_{n-1}(\C)$ (i.e., the space of Cauchy data according to Definition \ref{defSpazioDatiCauchy} below). One the other hand, $\I^{(\infty)}_{n-1}(\C)$ would be rather difficult to work with, without some important insights on its structure  (see Theorem \ref{thStrutturale} below), which follows from    the factorization
\begin{equation}\label{eqFattorizzazioneDerivate}
\pi^{\textrm{\normalfont flag}}_{k,k-1}=q^k\circ n^k.
\end{equation}
\begin{remark}
 Since (Corollary \ref{corCorollarioEgregio}) coordinates along $\pi^{\textrm{\normalfont flag}}_{k,k-1}$ are the same as those along $\pi_{k,k-1}$, i.e., $u^\alpha_{A,l}$, with $|A|+l=k$, a suggestive paraphrase of   \REF{eqFattorizzazioneDerivate} is that it allows to add to the coordinates of $J^{k-1}(E,n)$, separately, first the $k\Th$ derivatives with at least one internal direction (i.e., the coordinates $(u_{A,l})_a^\alpha$, $ |A|+l=k-1$, along $q^k$) and, then, the $k\Th$ purely normal derivative (i.e., the coordinates $u^\alpha_{\O,k}$ along $n^k$), thus obtaining $J^{k}(E,n)$. However, \virg{internal} and \virg{normal}, are to be understood in a \emph{universal} sense, that it, valid for any given (infinitesimal) \virg{space--time splitting} of the independent variables, i.e., a point of    $J^1(E_{-1},n-1)$ (Lemma \ref{lemLocaleBellino}). This is why (locally), the above passage from  the bundle $J^{k-1}(E,n)$ to  the bundle $J^{k}(E,n)$ is valid only if we fiber--multiply them by $J^1(E_{-1},n-1)$.
\end{remark}

\begin{remark}\label{remNormBundleUno}
 For $k=1$, diagram \REF{eqDiagrammaLibroAperto} yields
 \begin{equation}
  \xymatrix{
 & J^1(E,n,n-1)  \ar[dr]^{n^1}\ar[dl]_{p^1}& \\
 J^1(E,n)  & &  J^1(E,n-1), 
  }
\end{equation}
i.e., 
 \begin{equation}
  \xymatrix{
 & \Gr(TE,n,n-1)  \ar[dr]^{n^1}\ar[dl]_{p^1}& \\
\Gr(TE,n)  & & \Gr(TE,n-1) ,
  }
\end{equation}
so that $p^1_R=\P(R^\vee)$ and $n^1_r=\P(r^\dag)$ (see Fact \ref{factFattoInteressante}) for any $R\in  J^1(E,n) $  and $r\in  J^1(E,n-1) $. In this sense,  \REF{eqDiagrammaLibroAperto}  is but a generalization of the canonical double fibered structure of flag manifolds. 
\end{remark}
\begin{definition}
 $n^k$ is the $k\Th$ \emph{normal bundle}.
 \end{definition}
 
 Unlike $n^1$, which is a smooth bundle with abstract fiber $\R\P^m$ (see Remark \ref{remNormBundleUno}), $n^k$, with $k\geq 2$ are affine bundle, of dimension $m$. Indeed, \REF{eqFibraEnneInCoordinate} can be made more precise.
\begin{corollary}\label{corSimpaticoCheLegaVariFibratiNormali}
Let $r\subseteq\C^{k-1}_\theta$ be a point of $\II_{n-1}(\C^{k-1})$. Then $n^k_r$ is an affine subspace of $V_\theta J^{k-1}$ modeled by $S^{k-1}((\frac{R_\theta}{(r)_{k-2}})^\circ)\otimes_\R n^1_{r_0}$.
\end{corollary}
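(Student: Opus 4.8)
The plan is to locate $n^k_r$ inside the affine fibre of $\pi_{k,k-1}$, embed it into $V_\theta J^{k-1}$ by differencing $R$--planes, and then recognise its translation space as the asserted tensor product. Throughout set $\theta\df\pi_{1,0}(r)\in J^{k-1}(E,n)$, so that by \REF{eqFibraEnneInCoordinate} (equivalently Lemma \ref{lemmaStupido2})
\begin{equation*}
n^k_r=\{\Theta\in J^k(E,n)\mid \Theta_{k-1}=\theta,\ r\subseteq R_\Theta\}\subseteq\pi_{k,k-1}^{-1}(\theta).
\end{equation*}
I would recall the classical structure (see \cite{Sym,Bryant}) that $\pi_{k,k-1}^{-1}(\theta)$ is an affine space modelled on the symbol $S^k(R_\theta^\vee)\otimes_\R N$, where $R_\theta$ is the $R$--plane of $\theta$ and $N=T_{\theta_0}E/R_0$ is the normal space, and that $V_\theta J^{k-1}\cong S^{k-1}(R_\theta^\vee)\otimes_\R N$.

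First I would fix the affine structure and the embedding. Every $\Theta\in n^k_r$ satisfies $R_\Theta\supseteq r$ and $(\pi_{k-1,k-2})_\ast(R_\Theta)=R_\theta$, so two such planes share the subspace $r$ and cover the same plane downstairs; hence they can differ only in the generator transverse to $r$, and that difference is $\pi_{k-1,k-2}$--vertical. This yields a canonical map $n^k_r\to V_\theta J^{k-1}$, $\Theta\mapsto v_\Theta-v_{\Theta_0}$, the difference of transverse generators of $R_\Theta$; it is a priori defined modulo $r$, but since $r$ is horizontal it is valued in $V_\theta J^{k-1}$. Because the condition $r\subseteq R_\Theta$ is, at top order, the affine system \REF{eqEquazioniCoordinateDatiFinitiEQ} in the jet coordinates (Remark \ref{RemCoordInvol}), its image is a coset of a linear subspace, whence $n^k_r$ is an affine subspace of $V_\theta J^{k-1}$.

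Next I would identify the translation space. Under $V_\theta J^{k-1}\cong S^{k-1}(R_\theta^\vee)\otimes N$ the differencing map amounts to contracting the pure transverse symbol of $\Theta$ along the transverse direction; invariantly, the only admissible perturbation of the transverse generator is a multiple of the $(k-1)$--st symmetric power of the covector annihilating $(r)_{k-2}$, tensored with a free normal datum. Thus the translation space is $S^{k-1}(W^\circ)\otimes N$ with $W^\circ=\bigl(\tfrac{R_\theta}{(r)_{k-2}}\bigr)^\circ\hookrightarrow R_\theta^\vee$ the annihilator of $(r)_{k-2}$, and $N\cong n^1_{r_0}$ the first--order normal space. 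The fastest confirmation is the chart \REF{eqCoordinateGiusteSuDAtiFinitiBIS}, in which the free fibre coordinates of $n^k$ are exactly the $u^\alpha_{\O,k}$ and a direct expansion gives
\begin{equation*}
v_\Theta-v_{\Theta_0}=\sum_\alpha\bigl(u^\alpha_{\O,k}(\Theta)-u^\alpha_{\O,k}(\Theta_0)\bigr)\,(dt-t_a\,dx^a)^{k-1}\otimes\partial_{u^\alpha},
\end{equation*}
where $dt-t_a\,dx^a$ spans $W^\circ$ and $\partial_{u^\alpha}$ spans $N$, reproducing $S^{k-1}(W^\circ)\otimes_\R n^1_{r_0}$.

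The coordinate expansion is routine and is already half--prepared by the coordinate remarks. I expect the main obstacle to be the invariant step: proving that the vertical difference is forced to be a pure $(k-1)$--st symmetric power of the single transverse covector (this is where the involutivity of $r$ and the symmetry of the symbol must be combined), and then pinning down the canonical identification of the surviving normal factor with $n^1_{r_0}$. The latter also explains the shift from the naive degree $k$ of the symbol of $\pi_{k,k-1}$ to the degree $k-1$ appearing here: exactly one symmetric slot is consumed by the transverse generator $v_\Theta$, which is the geometric shadow of the factorisation \REF{eqFattorizzazioneDerivate}.
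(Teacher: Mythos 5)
Your argument is correct and follows essentially the same route as the paper, which in fact states this corollary without an explicit proof, relying on the preceding coordinate remark: there the relations \REF{eqEquazioniCoordinateDatiFinitiEQ} are used to show that the $u^\alpha_{\O,k}$ are the fibre coordinates of $n^k$, and this is precisely the computational core of your proposal. Your invariant differencing construction, together with the explicit expansion $v_\Theta-v_{\Theta_0}=\sum_\alpha\bigl(u^\alpha_{\O,k}(\Theta)-u^\alpha_{\O,k}(\Theta_0)\bigr)(dt-t_a\,dx^a)^{k-1}\otimes\partial_{u^\alpha}$ obtained by iterating \REF{eqEquazioniCoordinateDatiFinitiEQ}, merely makes explicit the identification with $S^{k-1}\bigl((R_\theta/(r)_{k-2})^\circ\bigr)\otimes_\R n^1_{r_0}$ that the paper leaves implicit, including the degree drop from $k$ to $k-1$ caused by contracting one symbol slot with the transverse generator.
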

 
 Corollary \ref{corSimpaticoCheLegaVariFibratiNormali} provides a link between the \emph{true} normal bundle $n^1$, i.e., the one which formalizes the idea of the normal derivative to an embedded manifold, and the higher--oder ones. This raises the possibility to join together all normal bundles. Indeed,  in virtue of   Theorem \ref{thFagSonoCoseInvolutive}, $ \I_{n-1}(\C) $ inherits a   canonical sequence, to be thought of as   the limit of sequences \REF{eqGrass222}  over finite--order flag jet bundles,   
  \begin{equation}
\xymatrix{
r\ar@{^(->}[r] \ar[dr]& \C\ar[d] \ar@{->>}[r] & \frac{\C}{r}\ar[dl]\\
& \I_{n-1}(\C), &
}
\end{equation}
and $n^1$ can be considered as a bundle over  $ \I_{n-1}(\C) $ since the latter is, in   turn, a bundle over $J^1(E,n-1)$.
\begin{definition}
\begin{equation}
 \overrightarrow{n}\df\prod_{k\in\N_0} S^k\left(\frac{\C}{r}\right)^\circ\otimes n_1
\end{equation}

 is the \emph{bundle of (infinite--order) normal directions}.
\end{definition}
\begin{remark}\label{remAffinitaFibratoNromale}
Let  $r\subseteq\C^{}_\theta$ be a point of $\I_{n-1}(\C)$. Then the $k\Th$ homogeneous component of $\overrightarrow{n}_r$, denoted by $\overrightarrow{n}_r^k$, is precisely the linear space 
over which  $n^{k+1}_{r_k}$ is modeled. 
%
\end{remark}
 
\section{Finite and infinite order Cauchy data}\label{secCauchDat}
We show now that the familiar definition of a Cauchy datum of order $k$ is naturally framed in the diagram \REF{eqDiagrammaLibroAperto}.  To begin with, let us call a small submanifold $\Sigma\subseteq E$  a \emph{Cauchy value}, or a \emph{$0\Th$ order Cauchy datum}. The reason is obvious: $\Sigma_{-1}$ is a \emph{Cauchy surface} in the manifold $E_{-1}$ of independent variables, and $\Sigma$ may be (locally) thougth of as the graph (i.e., the set of values) of a ($\R^m$--valued) function on $\Sigma_{-1}$. Then, it is natural to give the next Definition \ref{defCauhcDat}, for $k=0,1,\ldots,\infty$.

\begin{definition}\label{defCauhcDat}
 A small involutive submanifold $\Sigma\subseteq J^k( E,n)$ is called a \emph{$k\Th$ order Cauchy datum} (or, simply, a \emph{Cauchy datum}, if $k=\infty$). $\Sigma_0$ is the  Cauchy value corresponding to $\Sigma$ and $\Sigma_{-1}$, if any, is the corresponding Cauchy surface.
\end{definition}

We introduce now a secondary manifold whose secondary points (i.e., leaves) are in a natural one--to--one correspondence with small leaves of $J^{k}(E,n)$.
\begin{definition}\label{defSpazioDatiCauchy}
 $\III_{n-1}(\C^k)$ is the \emph{space of $k\Th$ order Cauchy data}. When $k=\infty$, we obtain  $\III_{n-1}(\C)$, simply called   the \emph{space of  Cauchy data}.
\end{definition}
Observe that, thanks to jet projections,  a Cauchy datum $\Sigma$ determines a tower    of $k\Th$ order Cauchy data $\Sigma_k$:
\begin{equation}\label{eqTorreDatiCauchyPRE}
\Sigma_{-1}\leftarrow\Sigma_{0}\leftarrow\cdots\leftarrow\Sigma_{k-1}\leftarrow\Sigma_{k}\leftarrow\cdots\leftarrow\Sigma.
\end{equation}
Since terms of \REF{eqTorreDatiCauchyPRE} project diffeomorphically one onto the other, so do the terms of the sequence  \REF{eqTorreDatiCauchy} below:
\begin{equation}\label{eqTorreDatiCauchy}
(\Sigma_{-1})_{(1)}\leftarrow(\Sigma_{0})_{(1)}\leftarrow\cdots\leftarrow(\Sigma_{k-1})_{(1)}\leftarrow(\Sigma_{k})_{(1)}\leftarrow\cdots\leftarrow\Sigma_{(1)}.
\end{equation}
On the other hand, the first prolongation $\Sigma_{(1)}$ is a small submanifold in $J^\infty(E,n,n-1)$, and, thanks to flag--jet projections (see Lemma \ref{lemDefProjFlag}), it determines a tower
\begin{equation}\label{eqTorreDatiCauchyProl}
(\Sigma_{(1)})_1\leftarrow(\Sigma_{(1)})_2\leftarrow\cdots\leftarrow(\Sigma_{(1)})_{k-1}\leftarrow(\Sigma_{(1)})_{k}\leftarrow\cdots\leftarrow\Sigma_{(1)},
\end{equation}
where
\begin{equation}
(\Sigma_{(1)})_k := \pi^{\textrm{flag}}_{\infty,k} (\Sigma_{(1)}).
\end{equation}
Again, terms of \REF{eqTorreDatiCauchyProl} project diffeomorphically one onto the other. Lemma \ref{lemDatiSuperioriSonoGrafici} below clarifies the relationship between the two towers, \REF{eqTorreDatiCauchy}  and  \REF{eqTorreDatiCauchyProl}, having the common inverse limit $\Sigma_{(1)}$.

\begin{lemma}\label{lemDatiSuperioriSonoGrafici}
 $(\Sigma_{(1)})_k $ is the graph of a section of $n^k$ over $(\Sigma_{k-1})_{(1)}$.
\end{lemma}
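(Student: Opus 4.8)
The plan is to realise the restriction of $n^k$ to $(\Sigma_{(1)})_k$ as a composition of two maps that the towers \REF{eqTorreDatiCauchy} and \REF{eqTorreDatiCauchyProl} already supply as diffeomorphisms, and then to read off the section from the resulting inverse. Recall that $n^k\colon J^k(E,n,n-1)\to\II_{n-1}(\C^{k-1})$ is a bundle (Lemma \ref{lemmaStupido2}) and that, comparing the coordinate systems \REF{eqCoordinateGiusteSuDAtiFinitiBIS} and \REF{eqCoordPianPicInv}, its only fibre coordinate is the top purely normal derivative $u^\alpha_{\O,k}$. Thus, proving that $(\Sigma_{(1)})_k$ is a graph over $(\Sigma_{k-1})_{(1)}$ amounts to proving that $n^k$ maps $(\Sigma_{(1)})_k$ diffeomorphically onto $(\Sigma_{k-1})_{(1)}$.

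The crucial step is the identity $n^k\circ\pi^{\textrm{flag}}_{\infty,k}=\pi^{\textrm{inv.}}_{\infty,k-1}$, read under the identification $J^\infty(E,n,n-1)\cong\I_{n-1}(\C)$ of Theorem \ref{thFagSonoCoseInvolutive}. It is obtained by passing to the inverse limit in the commutativity relation $n^{k-1}\circ\pi^{\textrm{flag}}_{k,k-1}=\pi^{\textrm{inv.}}_{k,k-1}\circ n^{k}$ expressed by the upper parallelepiped of \REF{eqDiagrammaLibroAperto} (Corollary \ref{corCorollarioEgregio}): in the limit the maps $n^k$ assemble precisely into the isomorphism of Theorem \ref{thFagSonoCoseInvolutive}, and the compatibility of the limit projections yields the displayed identity. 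Here lies the only genuine subtlety of the argument, namely the index shift between the flag level $k$ and the involutive level $k-1$: it is exactly Theorem \ref{thFagSonoCoseInvolutive}, i.e. the fact that $n^k$ becomes an isomorphism between the two sides of \REF{eqProBandiereSuInvolutive} \emph{only} in the limit, that makes this bookkeeping work. Applying the identity to the prolongation $\Sigma_{(1)}$, regarded as the field of tangent planes of $\Sigma$ inside $\I_{n-1}(\C)$, gives $n^k\big((\Sigma_{(1)})_k\big)=\pi^{\textrm{inv.}}_{\infty,k-1}(\Sigma_{(1)})=(\Sigma_{k-1})_{(1)}$, because pushing the tangent planes of $\Sigma$ forward to $J^{k-1}(E,n)$ produces exactly the tangent planes of $\Sigma_{k-1}$, that is, its prolongation.

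To finish, I would use that both towers project diffeomorphically term by term, so that $\pi^{\textrm{flag}}_{\infty,k}\colon\Sigma_{(1)}\to(\Sigma_{(1)})_k$ and $\pi^{\textrm{inv.}}_{\infty,k-1}\colon\Sigma_{(1)}\to(\Sigma_{k-1})_{(1)}$ are diffeomorphisms. Combined with the key identity this forces $n^k|_{(\Sigma_{(1)})_k}=\pi^{\textrm{inv.}}_{\infty,k-1}|_{\Sigma_{(1)}}\circ\big(\pi^{\textrm{flag}}_{\infty,k}|_{\Sigma_{(1)}}\big)^{-1}$, a composite of diffeomorphisms. Its inverse is then a smooth section $\sigma$ of the bundle $n^k$ over $(\Sigma_{k-1})_{(1)}$ whose image is $(\Sigma_{(1)})_k$, which is precisely the assertion. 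In coordinates the section is transparent: the fibre coordinate $u^\alpha_{\O,k}$, restricted to $(\Sigma_{(1)})_k$, becomes through this diffeomorphism a smooth function of the base, exhibiting the $k\Th$ purely normal derivative as a datum carried over $(\Sigma_{k-1})_{(1)}$.
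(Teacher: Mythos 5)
Your argument is correct, and it proves the lemma by a route that is recognizably different in packaging from the paper's, though it rests on the same two facts. The paper's proof is a direct pointwise description: it writes out $\Sigma_{(1)}=\{(\C_\theta,T_\theta\Sigma)\}$, $(\Sigma_{k-1})_{(1)}=\{T_{\theta_{k-1}}\Sigma_{k-1}\}$ and $(\Sigma_{(1)})_k=\{(R_{\theta_k},T_{\theta_{k-1}}\Sigma_{k-1})\}$, and simply observes that the first component $R_{\theta_k}$ of each pair lies in the fibre \REF{eqFibraEnneInCoordinate} of $n^k$ over the second component --- thereby exhibiting the section explicitly as $T_{\theta_{k-1}}\Sigma_{k-1}\mapsto(R_{\theta_k},T_{\theta_{k-1}}\Sigma_{k-1})$. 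You instead characterize ``being a graph'' as ``$n^k$ restricts to a diffeomorphism onto the base'', and obtain that restriction as $\pi^{\textrm{inv.}}|_{\Sigma_{(1)}}\circ\bigl(\pi^{\textrm{flag}}_{\infty,k}|_{\Sigma_{(1)}}\bigr)^{-1}$, using the commutativity of \REF{eqDiagrammaLibroAperto} passed to the inverse limit together with the fact that both towers \REF{eqTorreDatiCauchy} and \REF{eqTorreDatiCauchyProl} project diffeomorphically term by term (which is exactly what the paper's pointwise parametrization by $\theta\in\Sigma$ uses implicitly). What your version buys is a cleaner justification of smoothness of the section and of the identity $n^k\bigl((\Sigma_{(1)})_k\bigr)=(\Sigma_{k-1})_{(1)}$; what it gives up is the explicit value of the section, namely that over $T_{\theta_{k-1}}\Sigma_{k-1}$ it picks out precisely the $R$--plane $R_{\theta_k}$, which is the piece of information the paper later exploits (e.g.\ in \REF{eqCretinaMaBella} and Proposition \ref{propAffineCheChissaSeFunziona}). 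One cosmetic caution: your subscript conventions for $\pi^{\textrm{inv.}}_{\infty,k-1}$ do not quite match the paper's (where $\pi^{\textrm{inv.}}_{k,k-1}$ has target $\II_{n-1}(\C^{k-2})$), but you flag the index shift yourself and the intended maps are unambiguous.
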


\begin{proof}
If $\Sigma_{(1)}=\{(\C_\theta,T_\theta \Sigma)\mid \theta\in\Sigma)\}$, then  
\begin{equation}
(\Sigma_{k-1})_{(1)}=\{ T_{\theta_{k-1}} \Sigma_{{k-1}}\mid \theta_{k-1}\in\Sigma_{k-1})\},
\end{equation}
while
\begin{equation}\label{eqPuntiGiu}
(\Sigma_{(1)})_k =\{(R_{\theta_k},T_{\theta_{k-1}}\Sigma_{k-1})\mid \theta_{k-1}\in\Sigma_{k-1}\}.
\end{equation}
It remains to be noticed that, in \REF{eqPuntiGiu}, $R_{\theta_k}$ belongs to the fiber of $n^k$ over ${T_{\theta_{k-1}}\Sigma_{k-1}}$.
\end{proof}
 
 Lemma \ref{lemDatiSuperioriSonoGrafici} shows that   a section $\nu^k$ of $n^k$ is the only  \virg{additional  information} needed to produce a $k\Th$ oder Cauchy datum out of a $k-1\St$ order one. Schematically,
 \begin{equation}\label{eqCretinaMaBella}
\Sigma_{k-1}\mapsto (\Sigma_{k-1})_{(1)} \mapsto \nu^k((\Sigma_{k-1})_{(1)})\mapsto p^k(\nu^k((\Sigma_{k-1})_{(1)})).
\end{equation}

\begin{fact}\label{propOriginalePenso}
 Fix a  section $\nu^k$ for any $n^k$.    Then for any Cauchy value $\Sigma_0 $ there is a unique Cauchy datum  $\Sigma$ over $\Sigma_0$ such that 
  \begin{equation}\label{eqCatenaDelCazzo}
(\Sigma_{(1)})_k=\textrm{\normalfont graph}\,(\nu^k|_{(\Sigma_{k-1})_{(1)}}),\quad k=1,2,\ldots.
\end{equation}

\end{fact}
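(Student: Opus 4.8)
The plan is to construct the tower of finite--order data $\{\Sigma_k\}_{k\geq 0}$ recursively via the recipe \REF{eqCretinaMaBella}, to assemble $\Sigma:=\lim_{\leftarrow}\Sigma_k$, and to read off uniqueness from the fact that \REF{eqCatenaDelCazzo} dictates the recursion step by step. I would start from $\Sigma_0$, the given Cauchy value, and, assuming inductively that $\Sigma_{k-1}$ is a $(k-1)\St$ order Cauchy datum, define $\Sigma_k$ as follows. Prolonging $\Sigma_{k-1}$, i.e.\ sending $\theta_{k-1}\in\Sigma_{k-1}$ to $T_{\theta_{k-1}}\Sigma_{k-1}$, produces $(\Sigma_{k-1})_{(1)}$; since $\Sigma_{k-1}$ is a horizontal involutive submanifold, these tangent planes are horizontal and involutive, so $(\Sigma_{k-1})_{(1)}$ is a small submanifold of $\II_{n-1}(\C^{k-1})$, diffeomorphic to $\Sigma_{k-1}$ because a plane remembers its basepoint. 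I then restrict the fixed section $\nu^k$ of $n^k$ to $(\Sigma_{k-1})_{(1)}$ and set $(\Sigma_{(1)})_k:=\textrm{graph}(\nu^k|_{(\Sigma_{k-1})_{(1)}})$, a submanifold of $J^k(E,n,n-1)$ carried diffeomorphically onto $(\Sigma_{k-1})_{(1)}$ by $n^k$; finally $\Sigma_k:=p^k((\Sigma_{(1)})_k)$. By construction \REF{eqCatenaDelCazzo} holds at stage $k$.

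Three points keep the induction alive. First, $p^k$ restricts to a diffeomorphism of $(\Sigma_{(1)})_k$ onto $\Sigma_k$ and $\pi_{k,k-1}(\Sigma_k)=\Sigma_{k-1}$: indeed, commutativity of \REF{eqDiagrammaLibroAperto} together with the factorization \REF{eqFattorizzazioneDerivate} gives $\pi_{k,k-1}\circ p^k=p^{k-1}\circ q^k\circ n^k$, and by the closing argument of Corollary \ref{corCorollarioEgregio} the composite $p^{k-1}\circ q^k$ sends an involutive small plane to the point over which it lives, so that $\pi_{k,k-1}(p^k(\nu^k(r)))=\theta_{k-1}$ for $r=T_{\theta_{k-1}}\Sigma_{k-1}$; this both recovers the basepoint (hence injectivity of $p^k$ on the graph) and yields the consistency of the tower. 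Second---and this is the technical heart---$\Sigma_k$ must again be a horizontal involutive submanifold of $J^k(E,n)$, for otherwise the induction cannot be iterated: here one has to check that projecting the normal--section graph by $p^k$ realizes $\Sigma_k$ as an integral submanifold lifting the prolongation $(\Sigma_{k-1})_{(1)}$, so that its tangent planes are integral for $\C^k$ and involutive. Inheritance of involutivity (Remark \ref{remEreditarieta}) and the fact that $(\Sigma_{(1)})_k$ lives inside the flag jet bundle are the tools, but this verification is genuine work rather than diagram--chasing.

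It remains to take the limit and settle uniqueness. The consistency relations $\pi_{k,k-1}(\Sigma_k)=\Sigma_{k-1}$ make $\Sigma:=\lim_{\leftarrow}\Sigma_k$ a well--defined small submanifold of $J^\infty(E,n)=\lim_{\leftarrow}J^k(E,n)$; its tangent planes are the inverse limits of the involutive tangent planes to the $\Sigma_k$, hence involutive by Theorem \ref{thFagSonoCoseInvolutive}, so $\Sigma$ is a Cauchy datum in the sense of Definition \ref{defCauhcDat} whose projection to $E$ is the prescribed $\Sigma_0$, and \REF{eqCatenaDelCazzo} holds for every $k$ by construction. For uniqueness, let $\Sigma'$ be any Cauchy datum over $\Sigma_0$ satisfying \REF{eqCatenaDelCazzo}. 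By Lemma \ref{lemDatiSuperioriSonoGrafici} one has $\Sigma'_k=p^k((\Sigma'_{(1)})_k)$, while \REF{eqCatenaDelCazzo} forces $(\Sigma'_{(1)})_k=\textrm{graph}(\nu^k|_{(\Sigma'_{k-1})_{(1)}})$; thus an immediate induction from $\Sigma'_0=\Sigma_0$ gives $\Sigma'_k=\Sigma_k$ for all $k$, whence $\Sigma'=\Sigma$. The main obstacle is precisely the involutivity step of the second paragraph; everything else reduces to the commutativity already recorded in \REF{eqDiagrammaLibroAperto}.
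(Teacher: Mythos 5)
Your proposal is correct and follows essentially the same route as the paper, whose entire proof is the instruction to ``inductively make use of \REF{eqCretinaMaBella}'': you simply spell out that induction, adding the consistency of the tower under $\pi_{k,k-1}$, the preservation of involutivity, the passage to the inverse limit, and the uniqueness argument that the paper leaves implicit. No discrepancy in method, only in the level of detail.
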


\begin{proof} 
 Inductively make use of \REF{eqCretinaMaBella}.
\end{proof}


\begin{remark}
A valuable generalization of  Fact \ref{propOriginalePenso} would be that the space of Cauchy data over a given Cauchy value is the same as the space of sections of $\overrightarrow{n}$. This cannot be achieved, since higher--order   homogeneous components of $\overrightarrow{n}$ cannot be defined without the knowledge of  lower--order Cauchy data. However, much as affine spaces are modeled by linear ones, the space of Cauchy data we are interested in can be \virg{modeled} by  the space of sections of $\overrightarrow{n}$, in a sense clarified by   Proposition \ref{propAffineCheChissaSeFunziona}  below, which is   fundamental to prove the Structural Theorem \ref{thStrutturale}.
 
\end{remark}

\begin{proposition}\label{propAffineCheChissaSeFunziona}
 Let $\Sigma_0$ be a Cauchy value, and fix a Cauchy datum $\Sigma$ over it.  Then sections of  $ \overrightarrow{n}|_{\Sigma_{(1)}}$ are in one--to--one correspondence with  Cauchy data over $\Sigma_0$.
\end{proposition}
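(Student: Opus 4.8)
The plan is to use the fixed reference Cauchy datum $\Sigma$ as an \emph{origin}, which converts the iterated affine structure of the normal bundles $n^k$ (Corollary \ref{corSimpaticoCheLegaVariFibratiNormali}) into a genuinely linear one, identifiable with $\overrightarrow{n}$. The argument runs level by level, bootstrapping on Fact \ref{propOriginalePenso} and Lemma \ref{lemDatiSuperioriSonoGrafici}, and in the limit is controlled by Theorem \ref{thFagSonoCoseInvolutive}.

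First I would construct the map from sections to Cauchy data. By definition of $\overrightarrow{n}$, a section of $\overrightarrow{n}|_{\Sigma_{(1)}}$ is a collection $s=(s^k)_{k\in\N_0}$, where $s^k$ is a section of the $k\Th$ homogeneous component $\overrightarrow{n}_r^k|_{\Sigma_{(1)}}$. Given such an $s$, I build a Cauchy datum $\Sigma'$ over $\Sigma_0$ inductively: set $\Sigma'_0:=\Sigma_0$ and, assuming $\Sigma'_{k-1}$ produced, note that by the tower \REF{eqTorreDatiCauchy} it projects diffeomorphically onto $\Sigma_{k-1}$, so $(\Sigma'_{k-1})_{(1)}$ is canonically identified with $(\Sigma_{k-1})_{(1)}$. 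By Lemma \ref{lemDatiSuperioriSonoGrafici} the reference datum singles out a distinguished section $\nu^k$ of $n^k$ over $(\Sigma_{k-1})_{(1)}$, namely the one whose graph is $(\Sigma_{(1)})_k$; by Corollary \ref{corSimpaticoCheLegaVariFibratiNormali} and Remark \ref{remAffinitaFibratoNromale}, $n^k$ is an affine bundle modeled precisely on $\overrightarrow{n}^{k-1}$. Hence I may translate, setting $\mu^k:=\nu^k+s^{k-1}$, and then apply the chain \REF{eqCretinaMaBella} to pass from $\Sigma'_{k-1}$ to $\Sigma'_k$. The inverse limit of the resulting tower is the desired $\Sigma'$.

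Conversely, given any Cauchy datum $\Sigma'$ over $\Sigma_0$, at each level both $(\Sigma_{(1)})_k$ and $(\Sigma'_{(1)})_k$ are graphs (Lemma \ref{lemDatiSuperioriSonoGrafici}) of sections $\nu^k,\tilde\nu^k$ of $n^k$ over $(\Sigma_{k-1})_{(1)}\cong(\Sigma'_{k-1})_{(1)}$. Since $n^k$ is affine over $\overrightarrow{n}^{k-1}$, the difference $\tilde\nu^k-\nu^k$ is a well-defined section of $\overrightarrow{n}^{k-1}|_{\Sigma_{(1)}}$; assembling these differences over all $k$ yields a section of $\overrightarrow{n}|_{\Sigma_{(1)}}$. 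One then checks that the two constructions are mutually inverse (the forward translation by $s$ followed by the backward difference recovers $s$, and vice versa), which establishes the claimed bijection.

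The main obstacle will be the bookkeeping of the base manifolds. The affine model space of $n^k$ sits over $(\Sigma'_{k-1})_{(1)}$, which itself depends on all lower-order choices already made, whereas the section $s$ lives over the fixed $\Sigma_{(1)}$. Making the translation $\nu^k+s^{k-1}$ rigorous therefore requires the diffeomorphisms of \REF{eqTorreDatiCauchy} and \REF{eqTorreDatiCauchyProl} to intertwine coherently, across all levels, the identifications of $\overrightarrow{n}^{k-1}|_{\Sigma_{(1)}}$ with the model bundle of $n^k$; this is exactly the \emph{affine-versus-linear} phenomenon flagged in the remark preceding the statement, and verifying compatibility with the projections is where the real work lies. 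By Theorem \ref{thFagSonoCoseInvolutive} all these towers share the common inverse limit $\Sigma_{(1)}$, which is what guarantees that the level-by-level identifications glue into a single global one over $\Sigma_{(1)}$, so that the bijection is genuinely between sections of $\overrightarrow{n}|_{\Sigma_{(1)}}$ and Cauchy data over $\Sigma_0$.
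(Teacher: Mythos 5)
Your proposal is correct and follows essentially the same route as the paper: an inductive translation of the graph sections $\nu^k$ of the reference datum by the homogeneous components of the given section of $\overrightarrow{n}|_{\Sigma_{(1)}}$, using the affine structure of the $n^k$ (Corollary \ref{corSimpaticoCheLegaVariFibratiNormali}, Remark \ref{remAffinitaFibratoNromale}), the identifications \REF{eqTorreDatiCauchy}, and the chain \REF{eqCretinaMaBella} as in Fact \ref{propOriginalePenso}. Your explicit description of the inverse map as a level-by-level difference of sections is a natural completion of what the paper leaves implicit, not a different argument.
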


\begin{proof}
First of all, thanks to the chain of diffeomorphisms \REF{eqTorreDatiCauchy}, the bundle  $ \overrightarrow{n}|_{\Sigma_{(1)}}$ can be identified with  $ \overrightarrow{n}|_{(\Sigma_k)_{(1)}}$, for any $k$. Hence, a section $v$ of $ \overrightarrow{n}|_{\Sigma_{(1)}}$ may be thought of as a family $\{\nu^k\}_{k\in\N}$, where $\nu^k$ is a section of  $\overrightarrow{n}^k|_{(\Sigma_{k-1})_{(1)}}$.\par

Because of Lemma \ref{lemDatiSuperioriSonoGrafici},  
 $(\Sigma_{(1)})_1 $ is the graph of a section $\sigma^1$ of $n^1$ over $(\Sigma_{0})_{(1)}$, so, in view of Remark \ref{remAffinitaFibratoNromale}, it makes sense to define
 \begin{equation}\label{eqIndBasisGasante}
\Sigma'_1:=\textrm{graph}\,( \sigma^1 + \nu^0).
\end{equation}  
Now \REF{eqIndBasisGasante} can be used as the induction basis to subsequently \virg{adjust} the given Cauchy datum by means of the sections of $ \overrightarrow{n}$ (much as in the proof of Fact \ref{propOriginalePenso}). Indeed (see  also \REF{eqCatenaDelCazzo}), $(p^1(\Sigma'_1))_{(1)}$ is a small submanifold over $(\Sigma_0)_{(1)}$, and as such identifies diffeomorphically with $(\Sigma_1)_{(1)}$. Hence, $\nu^1$ can be understood as a bundle over  $(p^1(\Sigma'_1))_{(1)}$, and \REF{eqIndBasisGasante}  can be used again to define $\Sigma'_2$. Continuing the iteration, one defines the Cauchy datum $\Sigma'$.
%
%
%
\end{proof}

\section{The space of infinite--order Cauchy data}\label{secFinale}
Let 
\begin{equation}
\iota:{\III_{n-1}(\C)}\longrightarrow J^\infty(J^\infty(E,n),n-1)
\end{equation}

be the canonical inclusion, and
\begin{eqnarray*}
 p \df \pi_{\infty,0}|_{\III_{n-1}(\C)},&\textrm{where}&\pi_{\infty,0}:J^\infty(J^\infty(E,n),n-1)\longrightarrow J^\infty(E,n), \\
 n \df (\pi_{\infty,0})_\ast|_{\III_{n-1}(\C)},&\textrm{where}&\pi_{\infty,0}: J^\infty(E,n) \longrightarrow  E 
 \end{eqnarray*}
(see Definition \ref{defJetMap} for the meaning of $ (\pi_{\infty,0})_\ast$).
Maps $\iota$, $p$ and $n$ are conveniently depicted in the star--shaped diagram \REF{eqStrutturaDiffInData} below.
 \begin{equation}\label{eqStrutturaDiffInData}
\xymatrix{
& J^\infty(J^\infty(E,n),n-1)& \\
& \III_{n-1}(\C)\ar[dl]_{p}\ar[dr]^{n}\ar@{_(->}[u]^{\iota}& \\
J^\infty(E,n)& & J^\infty(E,n-1).
}
\end{equation}
Introduce the lifted distributions 
\begin{eqnarray}
\CC&\df& p^\ast(\C),\\
\DD&\df& n^\ast(\D),\\
\D&\df& \iota^\ast(\widetilde{\D}),
\end{eqnarray}
where $\D$ (resp., $\widetilde{\D}$) is the ($(n-1)$--dimensional) structural distribution on $J^\infty(E,n-1)$ (resp., $J^\infty(J^\infty(E,n),n-1)$).
%
 Notice that, unlike $\DD$, which has dimension $n-1$, both  $\CC$ and $\DD$ are infinite--dimensional, though they are well--behaved, in the sense that, homotopically, they are finite--dimensional.\par
Observe that $p$ maps a $\D$--leaf (i.e., a   Cauchy datum) $\Sigma_{(\infty)}$ into the small   leaf $\Sigma$ of $\C$. The same Cauchy datum is mapped by $n$ into the leaf $(\Sigma_0)_{(\infty)}$ of $\D$ (which is the corresponding Cauchy value). Hence, the secondary maps   
 \begin{eqnarray}
 (\III_{n-1}(\C), \D) &\stackrel{\p}{\longrightarrow} & (J^\infty(E,n),\C),\\
  (\III_{n-1}(\C), \D) &\stackrel{\n}{\longrightarrow} & (J^\infty(E,n-1),\D),
\end{eqnarray}
are well--defined. 
Recall that the inclusion $L\subseteq J^\infty(E,n)$ determines an inclusion $J^\infty(L,n-1)\subseteq J^\infty(J^\infty(E,n),n-1) $ (see Example \ref{exCarrierJet}).  
\begin{theorem}[Structural]\label{thStrutturale}
 Let $L$ (resp., $\Sigma'$) be a leaf of $J^\infty(E,n)$ (resp.,  $J^\infty(E,n-1)$). Then   the following identifications   \begin{eqnarray}
 p^{-1}(L)& = & J^\infty(L,n-1),\label{eqThStrutt1ok}\\
 n^{-1}(\Sigma') &=&J^\infty(\overrightarrow{n}|_{\Sigma_{(1)}}),\label{eqThStrutt2ok}
\end{eqnarray}
hold,  where $\Sigma$ is a Cauchy datum over $(\Sigma')_0$.

\end{theorem}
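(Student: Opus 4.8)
The plan is to prove each identity first at the level of sets, by unwinding the definitions of $p$, $n$, and $\III_{n-1}(\C)$ together with the results already established, and then to promote the resulting bijection to an identification of pro--finite manifolds intertwining the respective structural distributions. Both upgrades are most transparently carried out in the coordinates \REF{eqCoordPianPicInv} and via the factorization \REF{eqFattorizzazioneDerivate}, which let one read off exactly which jet variables remain free on each preimage.

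For \REF{eqThStrutt1ok}, I would start from the carrier jet map $j_\infty(L)_\ast\colon J^\infty(L,n-1)\to J^\infty(J^\infty(E,n),n-1)$ of Example \ref{exCarrierJet}. Since $L$ is a Cartan--integral submanifold, the tangent plane to any $(n-1)$--submanifold of $L$ lies in $TL=R\subseteq\C$ and is involutive by heredity (Remark \ref{remEreditarieta}); hence the image of $j_\infty(L)_\ast$ is contained in $\III_{n-1}(\C)$, and because $p\circ j_\infty(L)_\ast$ is the inclusion $L\hookrightarrow J^\infty(E,n)$, it lands inside $p^{-1}(L)$. Conversely, a point of $p^{-1}(L)$ is, by Theorem \ref{thFagSonoCoseInvolutive}, an infinite flag of involutive planes over some $\theta\in L$; involutivity forces the small plane $r$ to be horizontal, so that $r\subseteq R_\theta=T_\theta L$, and propagating this order by order identifies the whole jet with the $j_\infty(L)_\ast$--lift of a jet of an $(n-1)$--submanifold of $L$. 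In coordinates the claim is immediate: fixing the leaf $L$ freezes every $u^\alpha_{A,l}$, leaving precisely the small--plane coordinates $t_a$ and $(u^\alpha_{A',l'})_a$, which are exactly the jet coordinates of $J^\infty(L,n-1)$.

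For \REF{eqThStrutt2ok}, I would note first that the leaf $\Sigma'$ is the prolongation of the Cauchy value $(\Sigma')_0$, and that $n$ sends a Cauchy--datum jet to the jet of its Cauchy value; hence $n^{-1}(\Sigma')$ is exactly the set of Cauchy data over $(\Sigma')_0$. By Proposition \ref{propAffineCheChissaSeFunziona} these are in one--to--one correspondence with sections of $\overrightarrow{n}|_{\Sigma_{(1)}}$, i.e.\ with the leaves of $J^\infty(\overrightarrow{n}|_{\Sigma_{(1)}})$. To obtain the claimed equality of pro--finite manifolds I would use Corollary \ref{corSimpaticoCheLegaVariFibratiNormali} and Remark \ref{remAffinitaFibratoNromale}: over a fixed Cauchy value the purely normal fiber coordinates $u^\alpha_{\O,k}$ of the bundles $n^k$ become free and coincide with the fiber coordinates of the component $\overrightarrow{n}^{\,k-1}$, while their tangential prolongations fill out the jet coordinates of $J^\infty(\overrightarrow{n}|_{\Sigma_{(1)}})$; the factorization \REF{eqFattorizzazioneDerivate} guarantees that these assemble compatibly across all orders.

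I expect the main obstacle to be the \emph{structural} (distribution--preserving) part of both identifications, rather than the set--level content, which is handed to us by the earlier results. Concretely, for \REF{eqThStrutt2ok} one must check that the bijection of Proposition \ref{propAffineCheChissaSeFunziona} is not merely a bijection of leaf spaces but an isomorphism of the ambient pro--finite manifolds carrying $\D$ to the Cartan distribution of $J^\infty(\overrightarrow{n}|_{\Sigma_{(1)}})$; the coordinate description above settles this at once, since all the transition maps are polynomial in the jet variables. The subtle point to verify carefully in \REF{eqThStrutt1ok} is that involutivity forces tangency of the jet to $L$ to \emph{all} orders, and not merely to first order, so that surjectivity of the carrier embedding onto $p^{-1}(L)$ genuinely holds.
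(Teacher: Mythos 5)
Your proposal is correct and follows the same overall strategy as the paper: \REF{eqThStrutt2ok} is reduced to Proposition \ref{propAffineCheChissaSeFunziona} (the paper itself calls that identity ``just a paraphrase'' of the proposition), and \REF{eqThStrutt1ok} is proved by showing that the carrier-map image $J^\infty(L,n-1)\subseteq J^\infty(J^\infty(E,n),n-1)$ of Example \ref{exCarrierJet} exhausts $p^{-1}(L)$. Where you genuinely diverge is the surjectivity step of \REF{eqThStrutt1ok}. The paper first establishes the first-order identification $(p^\infty)^{-1}(L)=J^1(L,n-1)$ and then proves $\pi_{\infty,0}^{-1}(J^1(L,n-1))\cap\III_{n-1}(\C)=J^\infty(L,n-1)$ by a geometric argument: a small involutive submanifold, being non-maximal, sits inside some leaf whose underlying submanifold is tangent to $L_0$ to infinite order at the base point, whence a small submanifold of $L$ with the same infinite jet. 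You instead read surjectivity off the prolonged equations \REF{eqEquazioniCoordinateDatiFinitiEQpiulungoINF}: fixing $L$ freezes all the $u^\alpha_{A,l}$ and leaves free exactly the $t_B$, the jet coordinates of an $(n-1)$-submanifold $t=f(x)$ of $L$. This is legitimate and is essentially the content of the paper's own Remark \ref{remCoordInfCauchDat} (where the $t_B$ are identified as the fiber coordinates of $p$); it buys you independence from the existence of an actual leaf through every formal Cauchy datum, on which the paper's geometric argument implicitly leans, at the price of being chart-bound. One small correction: on $p^{-1}(L)$ the $(u^\alpha_{A',l'})_a$ do \emph{not} survive as free coordinates alongside the $t_a$ --- they are determined by \REF{eqEquazioniCoordinateDatiFinitiEQpiulungoINF} once $L$ and the $t_B$ are fixed, and the jet coordinates of $J^\infty(L,n-1)$ are the $t_B$ alone. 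Finally, your insistence on upgrading the bijection of Proposition \ref{propAffineCheChissaSeFunziona} to an identification of pro-finite manifolds intertwining the structural distributions is more careful than the paper, which glosses over this point; the coordinates \REF{eqCoordInfCauDataALT}, exhibiting the $(u^\alpha_{\O,l})_B$ as fiber coordinates of $n$, do settle it as you say.
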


\begin{proof}
\REF{eqThStrutt2ok} is just a paraphrase of Proposition \ref{propAffineCheChissaSeFunziona}.\par
 Consider now the commutative diagram
 \begin{equation}
\xymatrix{
&\III_{n-1}(\C)\ar[d]\ar[dl]^p\ar@{^(->}[r]&J^\infty(J^\infty(E,n),n-1) \ar[d]^{\pi_{\infty,0}}\\
J^\infty(E,n)&\I_{n-1}(\C)\ar[l]^{p^\infty}\ar@{^(->}[r]& J^1(J^\infty(E,n),n-1) ,
}
\end{equation}
where $p^\infty$ is the limit of the $p^k$'s   and the vertical unlabeled arrow is ${\pi_{\infty,0}|_{\III_{n-1}(\C)}}$.

%

Observe that    planes in $J^1(L,n-1)$ are involutive, being contained in the tangent planes to $L$, which is $\C$--integral. Hence, $J^1(L,n-1)$ is also a subset of $\I_{n-1}(\C)$. Moreover,  
\begin{equation}
(p^\infty)^{-1}(L)=\{(T_\theta L, r)\mid r\subseteq T_\theta L,\quad \theta\in L\}
\end{equation}
is a bundle over $L$ whose fiber at $\theta\in L$ equals $\Gr(T_\theta L,n-1)$ (see Remark \ref{remNormBundleUno}), hence it coincides with $J^1(L,n-1)$. It remains to be observed that 
\begin{equation}\label{eqCheServeNelThEgr1}
\pi_{\infty,0}^{-1}(J^1(L,n-1))\cap \III_{n-1}(\C) =J^\infty(L,n-1),
\end{equation}
 where the latter is understood (see again Example \ref{exCarrierJet}) as a subset of $J^\infty(J^\infty(E,n),n-1) $. Inclusion \virg{$\supseteq$} is obvious, since $L$ is involutive and so are all its small submanifolds. Conversely, a point $[\Sigma']_\theta^\infty$ in the left--hand side of \REF{eqCheServeNelThEgr1} must be such that $ \theta=[L_0]^\infty_{\theta_0}$. On the other hand, being not maximal,  $\Sigma'$ must be contained into a leaf $L'_{(\infty)}$, where $L'$ has the same infinite jet as $L_0$ at $\theta_0$. So, there exist a small submanifold $\Sigma\subseteq L_0$,  which is tangent to infinite order to $\Sigma'$ at  $\theta_0$. Correspondingly,   a small submanifold (denoted by the same symbol) $\Sigma\subseteq L$ exists, such that $[\Sigma]_\theta^\infty=[\Sigma']_\theta^\infty$.

\end{proof}
Observe that, unlike \REF{eqThStrutt2ok}, \REF{eqThStrutt1ok} is canonical.
\begin{remark}[Coordinates]\label{remCoordInfCauchDat}
 By definition, the equations of $\III_{n-1}(\C)$   are the infinite prolongations of the equations of $\I_{n-1}(\C)$, which are just the \REF{eqEquazioniCoordinateDatiFinitiEQ}, rewritten with arbitrarily long multi--indexes, viz., 
 \begin{equation}\label{eqEquazioniCoordinateDatiFinitiEQpiulungo}
(u_{A,l}^\alpha)_a=u_{Aa,l}^\alpha+t_au_{A,l+1}^\alpha,\quad A\in\N_0^{n-1},l\in\N_0.
\end{equation}
It is a simple computation to show that all differential consequences of \REF{eqEquazioniCoordinateDatiFinitiEQpiulungo} read as
\begin{equation}\label{eqEquazioniCoordinateDatiFinitiEQpiulungoINF}
(u_{A,l}^\alpha)_B= \sum_{B'_1B'_2\cdots B'_s B''=B} t_{B'_1}t_{B'_2}\cdots t_{B'_s}u^\alpha_{AB'',l+s} ,\quad A,B\in\N_0^{n-1},l\in\N_0.
\end{equation}
In view of \REF{eqEquazioniCoordinateDatiFinitiEQpiulungoINF}, the  (infinite) set of functions
\begin{equation}\label{eqCoordInfCauData}
x^a,t,u^\alpha_{A,l}, t_B, \quad A,B\in\N_0^{n-1},B\neq\O,l\in\N_0, 
\end{equation}
can be taken as coordinates on $\III_{n-1}(\C)$, the infinite--order analog of  \REF{eqCoordPianPicInv}. By using \REF{eqCoordInfCauData} and  standard coordinates on $J^\infty(E,n)$, it looks obvious that $t_B$, with $B\in\N_0^{n-1}$, $B\neq\O$, are the fiber coordinates of $p$.\par
Now, similarly as for \REF{eqCoordinateGiusteSuDAtiFinitiBIS}, use  \REF{eqEquazioniCoordinateDatiFinitiEQpiulungoINF} to produce a new coordinate system
\begin{equation}\label{eqCoordInfCauDataALT}
x^a,t,u^\alpha_{A,0}, (u^\alpha_{\O,l})_B, t_B, \quad A,B\in\N_0^{n-1},A\neq\O,l\in\N, 
\end{equation}
from which one sees that $(u^\alpha_{\O,l})_B$, with $B\in\N_0^{n-1}$ and $l\in\N$ are the fiber coordinates of $n$. 
Coordinates \REF{eqCoordInfCauData} can be recovered from \REF{eqCoordInfCauDataALT} by the formulas
\begin{equation}\label{eqEquazioniCoordinateDatiFinitiEQpiulungoINF-alt}
u_{A,l}^\alpha= \sum_{B_1B_2\cdots B_s B=A}(-1)^s t_{B_1}t_{B_2}\cdots t_{B_s}(u^\alpha_{\O,l+s})_B ,\quad A\in\N_0^{n-1},l\in\N.
\end{equation}
\end{remark}

\begin{remark}[Affine case]
The infinite--order generalization of Lemma \ref{lemLocaleBellino} reads
 \begin{equation}\label{eqIsoLocalea}
\III_{n-1}(\C)\loc J^\infty(E,n)\times_{E_{-1}}J^\infty(E_{-1},n-1).
\end{equation}
Corresponcence \REF{eqIsoLocalea} takes a point $([\sigma]_x^\infty,[\Sigma]_x^\infty)$, where $\sigma$ is a section of $E\to E_{-1}$, to the point $j_\infty(\sigma)_\ast ([\Sigma]_x^\infty)$, where $j_\infty(\sigma)_\ast: J^\infty(E_{-1},n-1)\to J^\infty(J^\infty(E,n),n-1)$ is the jet map associated to $j_\infty(\sigma) $ (Definition \ref{defJetMap}). If coordinates \REF{eqCoordInfCauData} are split into $x^a,t,u^\alpha_{A,l}$ and 
$t_B$, one gets precisely the coordinates of a point in the right--hand side of \REF{eqIsoLocalea}.
\end{remark}

\begin{remark}
 A global analog of \REF{eqIsoLocalea} can be constructed by replacing, in the structural bundle $\C$ over $J^\infty(E,n)$, each fiber $\C_\theta$ by $J^\infty_0(\C_\theta,n-1)$. This shows that  $\III_{n-1}(\C) $ is an infinite--dimensional bundle over  $J^\infty(E,n)$, with generic fiber $J^\infty_0(\R^n,n-1)$, in strict analogy with flag bundles (Fact \ref{factFattoInteressante}).
\end{remark}

\begin{corollary}\label{corFondamentale}
 Let $L$ (resp., $\Sigma'$) be a leaf of $J^\infty(E,n)$ (resp.,  $J^\infty(E,n-1)$). Then   the following identifications of secondary manifolds 
 \begin{eqnarray}
 \p^{-1}(L)& = & J^\infty(L,n-1),\label{eqThStrutt1}\\
 \n^{-1}(\Sigma') &=&J^\infty(\overrightarrow{n}|_{\Sigma_{(1)}}),\label{eqThStrutt2}\\
(\III_{n-1}(\C), \CC) &=& J^\infty(E,n),\label{egThStrutt3}\\
 (\III_{n-1}(\C), \DD) &=& J^\infty(E,n-1),\label{egThStrutt4}
\end{eqnarray}
 hold,  where $\Sigma$ is a Cauchy datum over $(\Sigma')_0$.
\end{corollary}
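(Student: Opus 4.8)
The plan is to derive Corollary \ref{corFondamentale} as an almost immediate consequence of the Structural Theorem \ref{thStrutturale}, reducing each of the four identifications to a statement already established, with the only genuine work lying in passing from the level of pro-finite manifolds and maps $p$, $n$ to the level of secondary manifolds and secondary maps $\p$, $\n$. The key observation is that the identifications \REF{eqThStrutt1ok} and \REF{eqThStrutt2ok} of Theorem \ref{thStrutturale} are statements about the ordinary fibers of $p$ and $n$ as pro-finite manifolds, whereas \REF{eqThStrutt1} and \REF{eqThStrutt2} are the corresponding statements about the \emph{secondary} fibers of $\p$ and $\n$, i.e.\ fibers taken in the category of leaf spaces. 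So the first step is to recall that $\p$ and $\n$ are well-defined secondary maps (already argued from the fact that $p$ sends a $\D$-leaf $\Sigma_{(\infty)}$ to the small leaf $\Sigma$ of $\C$, and $n$ sends it to the Cauchy value $(\Sigma_0)_{(\infty)}$).

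For \REF{eqThStrutt1} I would argue as follows. A leaf $L$ of $J^\infty(E,n)$ is a single secondary point of $(J^\infty(E,n),\C)$, so $\p^{-1}(L)$ is the secondary submanifold of $(\III_{n-1}(\C),\D)$ consisting of those $\D$-leaves whose $\p$-image is $L$. By the definition of $\p$, these are precisely the Cauchy data $\Sigma_{(\infty)}$ whose underlying small leaf $\Sigma$ lies in $L$. On the underlying pro-finite manifolds, Theorem \ref{thStrutturale} gives $p^{-1}(L)=J^\infty(L,n-1)$ via \REF{eqThStrutt1ok}, and this identification is compatible with the restricted distribution $\D$: the $\D$-leaves inside $p^{-1}(L)$ correspond exactly to the leaves of the Cartan distribution of $J^\infty(L,n-1)$, since a small submanifold $\Sigma\subseteq L$ and its prolongation correspond to a leaf of $J^\infty(L,n-1)$. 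Passing to leaf spaces on both sides then yields the identification of secondary manifolds \REF{eqThStrutt1}, and it is canonical because \REF{eqThStrutt1ok} is. The argument for \REF{eqThStrutt2} is parallel, using \REF{eqThStrutt2ok} together with the fact that the prolongation structure on $\overrightarrow{n}|_{\Sigma_{(1)}}$ matches the secondary structure; here the identification is only non-canonical, inherited from the choice of $\Sigma$ over $(\Sigma')_0$ in Proposition \ref{propAffineCheChissaSeFunziona}.

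The identifications \egThStrutt3 and \egThStrutt4 are of a different flavor and, I expect, the easiest of the four: they say that equipping $\III_{n-1}(\C)$ with the \emph{lifted} distribution $\CC=p^\ast(\C)$ (resp.\ $\DD$, here read as the pullback of the structural distribution along $n$) produces a secondary manifold that is just $J^\infty(E,n)$ (resp.\ $J^\infty(E,n-1)$). The point is that $\CC$ has codimension-$(n-1)$ worth of extra directions relative to $\D$, namely precisely the fiber directions of $p$, so that the leaves of $\CC$ are the $p$-saturations of the $\D$-leaves; collapsing these recovers exactly the leaf space of $(J^\infty(E,n),\C)$. Concretely, in the coordinates \REF{eqCoordInfCauData}, $\CC$ is spanned by the total derivatives together with the $\partial_{t_B}$ directions, so its leaf space is obtained by forgetting the $t_B$ fiber coordinates of $p$, giving $J^\infty(E,n)$; symmetrically for $\DD$ and the fiber coordinates $(u^\alpha_{\O,l})_B$ of $n$ read off from \REF{eqCoordInfCauDataALT}.

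The main obstacle I anticipate is \emph{not} any hard computation but rather the care needed in the transition from ordinary preimages to secondary preimages: one must check that the set-theoretic identifications of Theorem \ref{thStrutturale} intertwine the relevant distributions, so that they descend to identifications of leaf spaces and not merely of underlying pro-finite manifolds. In particular, for \REF{eqThStrutt1}--\REF{eqThStrutt2} one has to verify that the Cartan (structural) distribution of $J^\infty(L,n-1)$, respectively the structural distribution of $J^\infty(\overrightarrow{n}|_{\Sigma_{(1)}})$, is exactly the distribution induced on the fiber by $\D$; this is where Lemma \ref{lemDatiSuperioriSonoGrafici} and the tower of diffeomorphisms \REF{eqTorreDatiCauchy}--\REF{eqTorreDatiCauchyProl} do the real work. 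Once this compatibility is in hand, all four statements follow by taking leaf spaces, and I would present the proof simply as: \REF{eqThStrutt1} and \REF{eqThStrutt2} are the secondary versions of \REF{eqThStrutt1ok} and \REF{eqThStrutt2ok}, while \egThStrutt3 and \egThStrutt4 follow from the explicit description of $\CC$ and $\DD$ in the coordinates of Remark \ref{remCoordInfCauchDat}.
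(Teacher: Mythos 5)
Your proposal is correct and follows exactly the route the paper intends: the paper gives no explicit proof of Corollary \ref{corFondamentale}, treating \REF{eqThStrutt1}--\REF{eqThStrutt2} as the secondary (leaf-space) restatements of \REF{eqThStrutt1ok}--\REF{eqThStrutt2ok} from Theorem \ref{thStrutturale}, and \REF{egThStrutt3}--\REF{egThStrutt4} as immediate from the definitions $\CC=p^\ast(\C)$, $\DD=n^\ast(\D)$, whose leaves are the $p$- (resp.\ $n$-) preimages of the leaves of $\C$ (resp.\ $\D$). Your only loose phrase is calling the $\CC$-leaves the ``$p$-saturations of the $\D$-leaves'' (they are the $p$-preimages of the full $\C$-leaves, which strictly contain those saturations), but this does not affect the argument.
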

\begin{corollary}[Transversality]\label{corTrasveralita}
 Projections $p$ and $n$ are  leafwise  transversal each other, i.e.,
\begin{enumerate}
\item $p$ projects diffeomorphically $n^{-1}(\Sigma)$ onto $J^\infty(E,n)$, for any leaf $\Sigma$ of $J^\infty(E,n-1)$;
\item $n$ projects diffeomorphically $p^{-1}(L)$ onto $J^\infty(E,n-1)$, for any leaf $L$ of $J^\infty(E,)$.
\end{enumerate}
\end{corollary}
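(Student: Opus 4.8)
The plan is to read both items off the Structural Theorem \ref{thStrutturale}, which has already computed the two fibres, and then to verify that the indicated restriction is a diffeomorphism by producing an explicit smooth inverse in the coordinates of Remark \ref{remCoordInfCauchDat}. I would handle the two assertions separately, starting with the canonical one, \emph{(2)}, where the argument is cleanest, and only then turning to \emph{(1)}, where the non-canonical normal-bundle identification enters.

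For \emph{(2)}, Theorem \ref{thStrutturale} gives the canonical identification $p^{-1}(L)=J^\infty(L,n-1)$, and by Definition \ref{defJetMap} the restriction of $n=(\pi_{\infty,0})_\ast$ to this fibre is nothing but the jet map of $\pi_{\infty,0}|_L\colon L\to L_0$. Since $L$ is a leaf, $\pi_{\infty,0}$ restricts to a diffeomorphism of $L$ onto the corresponding $n$-dimensional submanifold $L_0\subseteq E$; hence its jet map is a diffeomorphism of $J^\infty(L,n-1)$ onto $J^\infty(L_0,n-1)$, and the final step is to identify the latter with the ambient target by means of the local model \REF{eqIsoLocalea}. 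Thus $n|_{p^{-1}(L)}$ is the required diffeomorphism, and the whole of \emph{(2)} reduces to functoriality of the jet map applied to a leaf projection.

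For \emph{(1)}, Theorem \ref{thStrutturale} gives $n^{-1}(\Sigma)=J^\infty(\overrightarrow{n}|_{\Sigma_{(1)}})$, while Proposition \ref{propAffineCheChissaSeFunziona} identifies its leaves (the Cauchy data over $\Sigma_0$) with sections of $\overrightarrow{n}|_{\Sigma_{(1)}}$, each of which $p$ carries to a solution. I would make $p|_{n^{-1}(\Sigma)}$ explicit using the two coordinate systems of Remark \ref{remCoordInfCauchDat}: on $\III_{n-1}(\C)$ the functions \REF{eqCoordInfCauData} and \REF{eqCoordInfCauDataALT} differ precisely by the invertible substitution \REF{eqEquazioniCoordinateDatiFinitiEQpiulungoINF-alt}. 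The fibre coordinates of $n$ are the purely normal derivatives $(u^\alpha_{\O,l})_B$; along $n^{-1}(\Sigma)$ the remaining coordinates are frozen by $\Sigma$, and feeding them into \REF{eqEquazioniCoordinateDatiFinitiEQpiulungoINF-alt} expresses the standard jet coordinates $u^\alpha_{A,l}$ of $J^\infty(E,n)$ as a smooth, smoothly invertible function of the free ones. This is the computation that exhibits $p|_{n^{-1}(\Sigma)}$ as a diffeomorphism onto its image in $J^\infty(E,n)$.

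The term \emph{transversal} is then justified by observing that the fibre coordinates of $p$ (the $t_B$, $B\neq\O$) and those of $n$ (the $(u^\alpha_{\O,l})_B$) are disjoint, so that $\ker p_\ast\cap\ker n_\ast=0$ and each fibre of one projection is a section of the other. The step I expect to be genuinely delicate is not this bookkeeping but the surjectivity half of the word \emph{onto}: the datum $L$ (resp. $\Sigma$) freezes part of the coordinates, and one must check carefully, using Proposition \ref{propAffineCheChissaSeFunziona} and the local model \REF{eqIsoLocalea}, that the image of the restricted map is exactly the intended total jet space and not merely a sub-jet-space cut out by the frozen data. Keeping track of these identifications "up to the choice of $L$ (resp. $\Sigma$)", and ensuring that every manipulation survives the passage to the inverse limit exactly as in the proof of Theorem \ref{thStrutturale}, is where I would concentrate the effort.
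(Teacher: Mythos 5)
Your treatment of item (1) is essentially the paper's own proof: the paper writes $\Sigma$ as the graph of functions $f,g^\alpha$, observes that $n^{-1}(\Sigma)$ is cut out by the same equations in the coordinates \REF{eqCoordInfCauDataALT}, and then uses \REF{eqEquazioniCoordinateDatiFinitiEQpiulungoINF-alt} to conclude that both $n^{-1}(\Sigma)$ and $p(n^{-1}(\Sigma))$ are parametrized by $(x^a,t,(u^\alpha_{\O,l+s})_B)$ --- exactly the substitution you describe. For item (2) the paper is even shorter: it is declared an immediate consequence of the embedding $J^\infty(L,n-1)\hookrightarrow J^\infty(E,n-1)$ obtained in the proof of Theorem \ref{thStrutturale}.

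The genuine problem is the last step of your argument for (2). After correctly reducing $n|_{p^{-1}(L)}$ to the jet map of the diffeomorphism $L\to L_0$, you propose to ``identify $J^\infty(L_0,n-1)$ with the ambient target by means of the local model \REF{eqIsoLocalea}''. No such identification exists: $J^\infty(L_0,n-1)$ is the jet space of hypersurfaces of the $n$-dimensional manifold $L_0$ (one dependent variable), whereas $J^\infty(E,n-1)$ parametrizes $(n-1)$-dimensional submanifolds of the $(n+m)$-dimensional $E$ ($m+1$ dependent variables); already at order $k$ the fibres have dimensions $\binom{n-2+k}{k}$ versus $(m+1)\binom{n-2+k}{k}$, so for $m\geq 1$ the restricted map cannot be surjective. (Moreover, \REF{eqIsoLocalea} is a local model of $\III_{n-1}(\C)$, not of $J^\infty(E,n-1)$.) The word ``onto'' in the statement must be read as ``diffeomorphically onto its image'': that is what the paper's one-line proof of (2) actually establishes (an embedding), and it is also what your own coordinate computation for (1) establishes, since $p(n^{-1}(\Sigma))$ is the proper submanifold cut out by \REF{eqTra2}. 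You correctly flagged surjectivity as the delicate point, but the resolution is that it fails and is not being claimed, not that it requires a more careful verification; once the final identification is dropped, your argument for (2) coincides with the paper's.
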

\begin{proof}
 The second assertion is an immediate consequence of the fact that $J^\infty(L,n-1)$ is embedded into $J^\infty(E,n-1)$ (see the proof of Theorem \ref{thStrutturale}). For the first assertion, it is convenient to use the local coordinates from Remark \ref{remCoordInfCauchDat}. Namely, let $\Sigma$ be given by functions $f,g^\alpha$,
 \begin{equation}\label{eqTra1}
\Sigma: \left\{\begin{array}{c}t_B=\frac{\partial^B}{\partial x^B}f, \\u_A^\alpha=\frac{\partial^A}{\partial x^A}g^\alpha.\end{array}\right.
\end{equation}
Then $n^{-1}(\Sigma)$ is given, in the coordinates \REF{eqCoordInfCauDataALT}, by the same equations \REF{eqTra1}. Passing now to the coordinates \REF{eqCoordInfCauData},
 \begin{equation}\label{eqTra2}
n^{-1}(\Sigma): \left\{\begin{array}{ll}
u^\alpha_{A,l}=\frac{\partial^A}{\partial x^A}g^\alpha,& l = 0,\\
u^\alpha_{A,l} =   \sum_{B_1B_2\cdots B_s B=A}(-1)^s \frac{\partial^{B_1}}{\partial x^{B_1}}f \frac{\partial^{B_2}}{\partial x^{B_2}}f \cdots \frac{\partial^{B_s}}{\partial x^{B_s}}f(u^\alpha_{\O,l+s})_B, &l\neq 0,
 \end{array}\right.
\end{equation}
one sees that $n^{-1}(\Sigma)$ is parametrized by 
\begin{equation}\label{eqTra3}
(x^a,t,(u^\alpha_{\O,l+s})_B),
\end{equation}
 while the other coordinates are obtained via \REF{eqTra2}. So, the projection $p(n^{-1}(\Sigma))$ is  given by the same equations \REF{eqTra2}, in the standard coordinates $(x^a,t,u^\alpha_{A,l})$ of $J^\infty(E,n)$. Hence, $p(n^{-1}(\Sigma))$ is again parametrized by \REF{eqTra3}.
\end{proof}
\begin{remark}
\REF{eqThStrutt2} and \REF{eqThStrutt1}   might be seen as the secondary analog  of the $1\St$ order  projections of flag manifolds (see Fact \ref{factFattoInteressante} and Remark \ref{remNormBundleUno}).
\end{remark}
\section{Concluding remarks and perspectives}\label{secConclusiva}
\subsubsection*{{Secondary} ODEs}
Identifications \REF{egThStrutt3} and \REF{egThStrutt4} allow to regard $J^\infty(E,n)$ and\linebreak $J^\infty(E,n-1)$ as secondary quotients of the same secondary manifold $\III_{n-1}(\C)$. Indeed, $\CC$ can be understood as the distribution generated by $\D$ and by a $\p$--vertical secondary distribution (and similarly for $\DD$), as firstly pointed out by L. Vitagliano \cite{LucaPrivate}. Since the leaves of $\CC$ are canonically identified with the leaves of $\C$,   any equation in $n$ independent variables is the same  as a (secondary) distribution on the space of admissible Cauchy data. Such a perspective  seems to be   evidence of  a (formal) analogy with Hamiltonian formalism in mechanics.\par
\subsubsection*{{Twisted} characteristic cohomology}
Theorem \ref{thStrutturale} is the natural departing point to define a twisted generalization of the  characteristic cohomology of an equation (first of all, the empty one), where the coefficients    belong to    the $\p$-- or $\n$--vertical  characteristic cohomology of the corresponding space of Cauchy data, in analogy with the  differential Leray--Serre spectral sequence  associated with a fiber bundle. 
In particular, among terms of the  twisted characteristic cohomology it can be found the one which corresponds to an \virg{action--valued action}, i.e., an action integral whose value on a leaf $L$ (resp., $\Sigma$) is an action integral on $\p^{-1}(L)$   (resp., $\n^{-1}(\Sigma)$). In Section \ref{SecEsempioVariazionale} below we propose a toy model for such an action, and derive the corresponding Euler--Lagrange equations.\par
The theory of twisted characteristic cohomology should be a source of    simplification techniques in Calculus of Variations, and of  methods to   compute characteristic cohomology   of nonlinear PDEs, much as   the K\"unnet formula does in Algebraic  Topology.\par 
Fact \ref{FattoFattoso}, stemming from Theorem \ref{thStrutturale}, provides a basic understanding of the characteristic cohomology of the space of Cauchy data.
\begin{fact}\label{FattoFattoso}
 The $\D$--spectral sequence is 1--line.
\end{fact}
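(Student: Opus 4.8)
The plan is to show that the $\D$-spectral sequence of the space of Cauchy data $(\III_{n-1}(\C),\D)$ collapses to a single nontrivial row, which is the meaning of being \emph{1--line}. The natural strategy is to exploit the diffeomorphic identification \REF{eqThStrutt1ok} from the Structural Theorem, namely $p^{-1}(L)=J^\infty(L,n-1)$, which says that the $\p$-fibers are themselves empty equations of the form $J^\infty(L,n-1)$. First I would recall that for an empty equation $J^\infty(\cdot,r)$ the horizontal (de Rham) complex along its $r$-dimensional structural distribution has its characteristic cohomology concentrated in a single spectral line --- this is the prototypical situation where the $\C$-spectral sequence is 1--line, a fact classically available for the empty equation. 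The idea is then to transfer this property from the fibers to the total space $\III_{n-1}(\C)$.

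The key steps, in order, would be the following. I would use identification \REF{egThStrutt3}, $(\III_{n-1}(\C),\CC)=J^\infty(E,n)$, to present $\III_{n-1}(\C)$ as fibered (in the secondary sense) over $J^\infty(E,n)$ via $\p$, with $\D$ sitting inside $\CC$ as a codimension-one subdistribution whose leaves are the Cauchy data $\Sigma_{(\infty)}$ lying inside the leaves $L$ of $\C$. Next I would invoke the splitting of $\CC$ as the distribution generated by $\D$ together with a $\p$-vertical secondary distribution (the observation attributed to Vitagliano in Section \ref{secConclusiva}). This places the $\D$-spectral sequence in the setting of a differential Leray--Serre-type spectral sequence associated with the secondary fibration $\p$: the base $J^\infty(E,n)$ contributes its own $\C$-cohomology, while the fibers contribute the characteristic cohomology of $J^\infty(L,n-1)$. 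Because the fiber is an empty equation with $(n-1)$-dimensional structural distribution, its horizontal cohomology lives in top degree $n-1$ only (and degree $0$), so the fiber contribution is concentrated, forcing the assembled $\D$-spectral sequence to occupy a single line.

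Concretely, I would argue at the level of the $\D$-horizontal complex: since $\dim\D=n-1$, the horizontal forms range in degrees $0,\ldots,n-1$, and I would show that the interesting cohomology is pushed to the edge $p=n-1$ of the spectral sequence, exactly as for the empty equation $J^\infty(E,n-1)$ under identification \REF{egThStrutt4}. The transversality Corollary \ref{corTrasveralita}, asserting that $\n$ maps $\p^{-1}(L)$ diffeomorphically onto $J^\infty(E,n-1)$, gives the missing compatibility: the $\D$-leaf structure on each $\p$-fiber is the standard structure of an empty equation in $n-1$ variables, whose spectral sequence is known to be 1--line. I would then conclude that the global $\D$-spectral sequence inherits this 1--line property, the single nontrivial line being the one carrying the $(n-1)$-degree horizontal cohomology.

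The main obstacle I anticipate is the rigorous passage from the fiberwise 1--line property to the global one: in the pro-finite (secondary) setting the usual finite-dimensional Leray--Serre machinery is not directly available, and one must control the interaction between the $\p$-vertical secondary cohomology and the base $\C$-cohomology without appealing to Frobenius (which fails on pro-finite manifolds, as noted in the introduction). The delicate point is to verify that the generators outside the single target line are genuinely exact in the $\D$-complex --- i.e., that the differential does not create new cohomology off the expected line. I expect this to hinge on the explicit coordinate description from Remark \ref{remCoordInfCauchDat}, where the fiber coordinates $t_B$ of $\p$ and the coordinates $(u^\alpha_{\O,l})_B$ of $\n$ decouple the horizontal differential, allowing an explicit homotopy contracting all but the top $\D$-horizontal degree.
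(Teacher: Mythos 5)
Your overall target is right---the final ingredient must be the classical one--line theorem for the $\C$--spectral sequence of an empty equation---but the route you choose to reach it has a genuine gap, and it is precisely the one you flag yourself at the end. You propose to fiber $\III_{n-1}(\C)$ over $J^\infty(E,n)$ via $\p$, establish the one--line property on each secondary fiber $\p^{-1}(L)=J^\infty(L,n-1)$, and then assemble the global statement by a differential Leray--Serre--type argument in the secondary setting. But that twisted/Leray--Serre machinery is exactly what Section \ref{secConclusiva} of the paper presents as a \emph{future} development departing from Theorem \ref{thStrutturale}; it is not available here, and even granting its existence the bookkeeping is not obviously favorable: the filtration underlying the $\D$--spectral sequence is the Cartan (vertical--degree) filtration of the $(n-1)$--dimensional distribution $\D$, not the filtration induced by the secondary fibration $\p$, and the base $J^\infty(E,n)$ contributes $\C$--cohomology spread over horizontal degrees $0,\ldots,n$, so concentration of the fiber cohomology alone does not force $E_1^{p,q}=0$ for $p\geq 1$, $q<n-1$. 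The passage ``fiberwise one--line $\Rightarrow$ globally one--line'' is therefore not a routine step but the whole difficulty, and it is left unproved.

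The paper's proof avoids the fibration over $J^\infty(E,n)$ entirely. It uses the local, non--canonical embedding of $J^\infty(E,n-1)$ into $J^\infty(E,n)$ from Remark \ref{remarkCheQuiNonCentraNulla} to fix a background family of Cauchy data, and then Proposition \ref{propAffineCheChissaSeFunziona} identifies $\III_{n-1}(\C)$ \emph{locally and as a whole} with the horizontal jet space $\overline{J}^\infty(\overrightarrow{n}|_{J^\infty(E,n-1)_{(1)}})$, i.e.\ with an empty equation in $n-1$ independent variables whose Cartan distribution is $\D$. The one--line theorem then applies directly to this single empty equation---no gluing over a base, no secondary Leray--Serre argument. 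Your closing remark about the coordinates of Remark \ref{remCoordInfCauchDat} decoupling $t_B$ from $(u^\alpha_{\O,l})_B$ is the coordinate shadow of exactly this identification; if you make that identification the centerpiece of the argument (rather than an afterthought supporting a fiberwise contraction), the proof closes in two lines.
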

\begin{proof}
Embed  $J^\infty(E,n-1)$ into $J^\infty(E,n)$ (see Remark \ref{remarkCheQuiNonCentraNulla}), and then   observe that $\III_{n-1}(\C)$ is locally the space of horizontal infinite jets $\overline{J}^\infty(\overrightarrow{n}|_{J^\infty(E,n-1)_{(1)}})$ (see Proposition \ref{propAffineCheChissaSeFunziona}).
\end{proof}
\subsubsection*{Invariance of the framework}
From a mere set--theoretical point of view,    $ n^{-1}(\Sigma') $ is  but the inverse image of the submanifold $\Sigma_0'\subseteq E$ via the projection $\pi_{\infty,0}:J^\infty(E,n)\longrightarrow E$. The main virtue of   Theorem  \REF{thStrutturale} is to   reveal that $ n^{-1}(\Sigma') $ is   an empty equation, a fact which is essential if one is interested in   special subsets of $ n^{-1}(\Sigma') $,  which arise from the analysis of nonlinear PDEs, and compute their characteristic cohomology, by using the traditional geometrical and cohomological methods   for PDEs. In a sense, the whole machinery developed in this paper was aimed at the proof of \REF{eqThStrutt2ok}, but perhaps a key feature of our treatment was not given enough attention.  Namely,  the whole framework is \emph{invariant}, i.e., well--behaved with respect to transformations, which gives a total freedom in the choice of  coordinates for computational purposes (as in the toy model proposed in the last Section \ref{SecEsempioVariazionale}).\par
 \subsubsection*{Higher codimension and complete flags}

It is advisable to develop the theory for higher codimension flag jets, i.e., replace $n-1$ by any $n_0<n$ in the constructions presented here. The so--obtained formalism may have interesting applications , e.g., in the context of quasi--local Hamiltonians (see, e.g., \cite{Kio} concerning quasi--local mass in General Relativity). If  complete flags are taken as the departing point,   then the theory for the   twisted characteristic cohomology  of the so--obtained space of complete jet flag   should be particularly rich, and play the same role, in the context of nonlinear PDEs, as the CW--complexes  in Algebraic Topology.

\section{An applicative example}\label{SecEsempioVariazionale}
In view of Theorem  \REF{thStrutturale}, every leaf   of $J^\infty(E,n-1)$  produces an empty equation over the leaf itself,
\begin{equation}
\underset{\textrm{leaf of }J^\infty(E,n-1)}{\Sigma'}\longmapsto \underset{\textrm{ space of infinite jets of the infinite normal bundle}}{n^{-1}(\Sigma').}
\end{equation}
Moreover, thanks to Corollary \ref{corTrasveralita}, the empty equation $n^{-1}(\Sigma')$ can be seen as a closed subset of $J^\infty(E,n)$. Hence, if some equation and/or variational principle is imposed on $J^\infty(E,n)$, it will reflects on $n^{-1}(\Sigma')$. This phenomenon has been originally noticed by Vinogradov in 1984  (see \cite{Vin1984}, Section 8.5),  but its cohomological analysis was carried out in detail   by Vinogradov and the author in  the 2006 paper \cite{Mor2007} (see  also \cite{Mor2010}), where     the relationsip between the $\C$--spectral sequence associated with  ${n^{-1}(\Sigma')}$ and   the relative $\C$--spectral sequence of the surrounding jet space $J^\infty(E,n)$ is clarified.\par
%
The example developed below, which shows how a variational principle determines a natural equation on  ${n^{-1}(\Sigma')}$, is  also a case    where two action integrals of different horizontal degree are  summed up.\par
Suppose that $E$ is a closed domain in $\R^{n+m}$, such that an $m$--dimensional submanifold $G$ of  $\R^{n+m}$ exists, and $E$ is a tubular neighborhood of it. Then $E$ is  (globally) a bundle over $G$ with fiber $D^n$, and  (locally) a bundle over $D^n$ with fiber $\R^m$. Observe that the (graphs of the) sections of the latter belong to the larger  class of submanifolds
\begin{equation}
\A\df\{L\subseteq E\mid L\cap\partial E=\partial L, \  L\textrm{ is oriented and connected}\}.
\end{equation}
Put also
\begin{equation}
\partial\A\df \{\partial L\mid L\in\A\}.
\end{equation}
Observe that $\A$ is nothing but a subset of the space $J^\infty(E,n)$, made of leaves which are well--behaved with respect to integration (in the terminology of Calculus of Variations, they would referred to as \virg{admissible}, see also \cite{Vin1984}, Section 8.5 on this concern), and $\partial\A$ is a subset of the space $J^\infty(\partial E,n-1)$.\par
Let 
\begin{eqnarray}
\S&\in&\overline{H}^n(J^\infty(E,n),\pi_{\infty,0}^{-1}(\partial E)),\label{eqEsse}\\
\S_\partial &\in& \overline{H}^{n-1}(J^\infty(E,n-1) ),\label{eqEsseBordo}
\end{eqnarray}
 two action integrals, i.e., secondary real--valued functions on $\A$ and $\partial\A$, respectively,
 \begin{eqnarray}
\S:L\in\A&\longmapsto&j_\infty(L)^\ast\S\in H^n(L,\partial L)\cong \R,\\
\S_\partial:\Sigma\in\partial\A &\longmapsto& j_\infty(\Sigma)^\ast\in {H}^{n-1}(\Sigma )\cong\R,
\end{eqnarray}
 where the last identifications are an elementary fact of differential topology (see \cite{Bott}).
Toghether, \REF{eqEsse} and \REF{eqEsseBordo}, define a secondary function
 \begin{equation}
\A\ni L\ \stackrel{\S_{\textrm{tot}}}{\longmapsto}\   \S(L)+\S_\partial(\partial L)\in\R.
\end{equation}
Expectedly,  the set of critical points of $\S_{\textrm{tot}}$ is smaller than a mere (suitably defined) intersection  of the critical points of $\S$ and $\S_\partial$, because an \virg{interaction term} arises. Namely, for any $L\in\A$, consider the module of cosymmetries (see \cite{Sym})
\begin{equation}
(\varkappa^\dag)_L\df\varkappa^\dag(p(n^{-1}((\partial L)_{(\infty)})))
\end{equation}
of $p(n^{-1}((\partial L)_{(\infty)}))$,    the canonical splitting 
\begin{equation}
\varkappa^\dag(J^\infty(E,n),\pi_{\infty,0}^{-1}(\partial E))\loc\varkappa^\dag(J^\infty(E,n))\oplus (\varkappa^\dag)_L,
\end{equation}
 and the corresponding decomposition\footnote{%
In   \cite{Vin1984}, Section 8.5, equations $(\ddd_{\textrm{rel}}\S)_L$ are denoted by  $\Gamma(\overline{\omega})$, where $\overline{\omega}$ is a representative of $\S$, while in  \cite{Mor2007}   they are denoted by $\theta'_{\overline{\omega}}$.} of the relative Euler--Lagrange differential\footnote{Introduced in  \cite{Mor2007},  Sec. 3.4, where it is denoted by $\boldsymbol{E}_{\textrm{\normalfont rel}}$.} of $\S$
\begin{equation}
\ddd_{\textrm{rel}}\S=(\ddd\S, (\ddd_{\textrm{rel}}\S)_L).\label{eqCanSplittRelDiffSec}
\end{equation}
It turns out that $(\ddd_{\textrm{rel}}\S)_L=0$ is a differential equation in $p(n^{-1}((\partial L)_{(\infty)}))$, i.e., imposed on the sections of $\overrightarrow{n}|_{((\partial L)_{(\infty)})_{(1)}}$, which formalizes precisely the above idea of interaction.

\begin{theorem}
\begin{equation}\label{eqGasanteLagrangianeCheInteragiscono}
 L\textrm{ is critical for }\S_{\textrm{\normalfont tot}}\Leftrightarrow\left\{\begin{array}{rll}L_{\infty}&\in\{\ddd\S=0\}^{(\infty)}&\subseteq J^\infty(E,n), \\ 
 L_{(\infty)}\cap\pi_{\infty,0}^{-1}(\partial L)&\in \{(\ddd_{\textrm{\normalfont rel}}\S)_L=0\}^{(\infty)}&\subseteq J^\infty(\overrightarrow{n}|_{((\partial L)_{(\infty)})_{(1)}}) ,\\ 
 (\partial L)_{(\infty)}&\in \{\ddd\S_\partial=0\}^{(\infty)}&\subseteq J^\infty(\partial E,n-1).\end{array}\right.
\end{equation}

\end{theorem}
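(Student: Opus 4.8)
The plan is to derive \REF{eqGasanteLagrangianeCheInteragiscono} from a single first--variation computation for $\S_{\textrm{tot}}$, organised so that its three lines appear as the vanishing of three mutually independent pieces. I would begin by fixing a leaf $L\in\A$ and describing the admissible variations, i.e. the vertical vector fields along $L$ that preserve the class $\A$ (in particular keeping $L\cap\partial E=\partial L$). The decisive structural input is that, by Theorem \ref{thStrutturale} and the leafwise transversality of $p$ and $n$ (Corollary \ref{corTrasveralita}), such variations split into three independent families: \emph{interior} variations supported away from $\partial L$; \emph{normal} boundary variations, which deform the section of $\overrightarrow{n}|_{((\partial L)_{(\infty)})_{(1)}}$ while fixing the Cauchy value $\partial L$ and which are exactly the tangent directions of the empty equation $n^{-1}((\partial L)_{(\infty)})=J^\infty(\overrightarrow{n}|_{((\partial L)_{(\infty)})_{(1)}})$ of \REF{eqThStrutt2ok}; and \emph{tangential} boundary variations, which move $\partial L$ inside $\partial E$.

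I would then compute the contribution of each family, matching it to one line of \REF{eqGasanteLagrangianeCheInteragiscono}. For interior variations the usual integration by parts produces no boundary term, so the first variation of $\S$ reduces to the pairing of $\ddd\S$ with the variation; since $\S_\partial$ is insensitive to interior deformations, the vanishing of this contribution is equivalent to the bulk Euler--Lagrange equation $\ddd\S=0$, the first line. Normal boundary variations fix $\partial L$ and hence leave $\S_\partial(\partial L)$ untouched; their effect on $\S$ is precisely what the relative Euler--Lagrange calculus of \cite{Mor2007} collects in the boundary--supported component $(\ddd_{\textrm{rel}}\S)_L$ of the splitting \REF{eqCanSplittRelDiffSec}, so their vanishing gives the middle, \emph{interaction}, line. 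Finally, tangential variations move $\partial L$ as an $(n-1)$--submanifold of $\partial E$; because $\partial(\partial L)=\emptyset$ there are no corner terms, and the first variation of $\S_\partial$ is the pairing of $\ddd\S_\partial$ with the variation, yielding the boundary Euler--Lagrange equation $\ddd\S_\partial=0$, the third line.

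To conclude I would invoke the independence of the three families: the three target cosymmetry modules, namely $\varkappa^\dag(J^\infty(E,n))$ for $\ddd\S$, the module $(\varkappa^\dag)_L=\varkappa^\dag(p(n^{-1}((\partial L)_{(\infty)})))$ for $(\ddd_{\textrm{rel}}\S)_L$, and $\varkappa^\dag(J^\infty(\partial E,n-1))$ for $\ddd\S_\partial$, are separated by transversality, so that $\ddd\S_{\textrm{tot}}(L)=0$ for \emph{every} admissible variation if and only if each of the three contributions vanishes separately. The superscripts $(\infty)$ on the right--hand side of \REF{eqGasanteLagrangianeCheInteragiscono} simply record that each condition must then hold along the whole infinite prolongation, which is automatic for formally integrable equations.

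The main obstacle is the tangential family and the associated claim of \emph{independence}. One must check that the part of the boundary momentum of $\S$ that would pair with tangential variations does not secretly overlap with either the interior term $\ddd\S$ or the normal--variation term $(\ddd_{\textrm{rel}}\S)_L$, so that the interaction condition genuinely survives as a separate equation living on the normal bundle rather than being reabsorbed into $\ddd\S_\partial=0$. This is exactly where Corollary \ref{corTrasveralita} and the identification \REF{eqThStrutt2ok} are indispensable: they guarantee that normal and tangential boundary variations are linearly independent and valued in distinct cosymmetry modules, which is the precise mechanism behind the \virg{interaction term} first noticed in \cite{Vin1984}. The one--dimensional toy model of the final section, where $n=1$ and the three lines specialise to the Euler--Lagrange equation, the classical transversality relation, and the prescribed endpoint condition, is the simplest instance in which this independence can be verified directly.
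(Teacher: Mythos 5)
Your plan diverges from the paper's argument, and the divergence hides a genuine error. The paper's proof is a two--step reduction: first, $L$ is critical for $\S_{\textrm{tot}}$ iff $\ddd_{\textrm{rel}}\S=0$ and $\ddd\S_\partial=0$ (the relative Euler--Lagrange formalism of \cite{Mor2007} is taken as given); second, the canonical splitting \REF{eqCanSplittRelDiffSec} of $\ddd_{\textrm{rel}}\S$ along the decomposition of cosymmetry modules $\varkappa^\dag(J^\infty(E,n),\pi_{\infty,0}^{-1}(\partial E))\loc\varkappa^\dag(J^\infty(E,n))\oplus(\varkappa^\dag)_L$ turns the single condition $\ddd_{\textrm{rel}}\S=0$ into the first two lines of \REF{eqGasanteLagrangianeCheInteragiscono}. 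All the content you are trying to supply by hand --- the separation of the boundary contribution from the bulk one --- is packaged in that splitting, not in Corollary \ref{corTrasveralita} or \REF{eqThStrutt2ok}, which describe the geometry of $\III_{n-1}(\C)$ and say nothing about boundary momenta.

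The concrete failure is your assignment of variation families to lines. You claim the interaction line is dual to \emph{normal} boundary variations (deforming the normal jet while fixing $\partial L$) and that \emph{tangential} variations see only $\S_\partial$. Test this on the paper's own toy model (Example \ref{exColProb}): for a first--order Lagrangian $g\,dx$ on the cylinder, the boundary term of $\delta\S$ is $\bigl[\tfrac{\partial g}{\partial y'}\,\delta y\bigr]_0^1$, which vanishes identically on variations with $\delta y(0)=\delta y(1)=0$ --- so your normal family contributes nothing and your middle line becomes vacuous --- while the transversality condition $(\ddd_{\textrm{rel}}\S)_{\{0,1\}}=\partial g/\partial y'=0$ is extracted precisely from the tangential endpoint displacements $\delta y(0),\delta y(1)$, which you routed exclusively to $\ddd\S_\partial$. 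The point you miss is that $(\ddd_{\textrm{rel}}\S)_L=0$ is an equation \emph{whose unknowns are} the normal jet data (hence it lives on $J^\infty(\overrightarrow{n}|_{((\partial L)_{(\infty)})_{(1)}})$), but it is not obtained by pairing with normal variations; conflating ``where the equation lives'' with ``which variations produce it'' is what makes your three--family decomposition, as stated, unable to reproduce the theorem. Relatedly, you never address the cross--term: a tangential variation changes $\S(L)$ as well as $\S_\partial(\partial L)$, so without the relative--cohomology bookkeeping your third family would yield a combined condition rather than the clean third line.
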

\begin{proof}
 Obviously, $L$ is critical for $\S_{\textrm{\normalfont tot}}$ if an only if $L$ is a solution of the relative Euler--Lagrange equations,
 \begin{equation}\label{eqDiffEulRel}
\ddd_{\textrm{rel}}\S=0,
\end{equation}
and $\partial L$ is a solution of $\ddd\S_\partial=0$, i.e., the last equation of the list \REF{eqGasanteLagrangianeCheInteragiscono}. It remains to observe that the first two equations are synthetically expressed by \REF{eqDiffEulRel}, thanks to \REF{eqCanSplittRelDiffSec}.
\end{proof}
When \REF{eqEsse} and  \REF{eqEsseBordo} are volume integrals,   critical points of $\S_{\textrm{tot}}$ are  the  \emph{least--volume and least--boundary---area} submanifolds of $E$.\par
Apparently,   \REF{eqGasanteLagrangianeCheInteragiscono} is just a clean way to write down the so--called natural boundary   conditions in the Calculus of Variations, and all the machinery exploited to obtain  \REF{eqGasanteLagrangianeCheInteragiscono} is  but a paraphrase of the classical analytical manipulations on variational integral (see, e.g.,    \cite{Giaq,Brunt}) exploited to derive the natural boundary conditions. In fact,  a very important feature of \REF{eqGasanteLagrangianeCheInteragiscono}, their invariance, does not show at a superficial look. Such a property allows, for instance, to derive the correct expression of the transversality conditions, for any \virg{tubular} manifold---which are not known to date---just  by a wise choice of coordinates.  We will not go into the details of the general construction, but present a simple toy model with $n=m=1$.
\begin{example}[The problem of Columbus]\label{exColProb}
Given the curves $\Gamma_1$ and $\Gamma_2$ in $\R^2$, consider the problem of finding, among the (non self--intersecting) (smooth)  curves which start from a point of $\Gamma_1$ and end to a point of $\Gamma_2$ (without crossing $\Gamma_1\cup\Gamma_2$ in any other point), those whose length is (locally) minimal. Obviously, a    curve $\gamma$  is a solution of  the problem at hand if and only if
\begin{itemize}
\item[(EL)] $\gamma$ is a   {straight line};
\item[(TC)] $\gamma$   {hits at a right angle $\Gamma_1\cup\Gamma_2$}.
\end{itemize}
\end{example}
The problem can be formalized by means of a  Lagrangian density  $fdx$, where $f=f(x,y,y')$, on a tubular submanifold $E\subseteq\R^2$, with   $\partial E=\Gamma_1\cup\Gamma_2$. In this setting, equations (TC) for a curve $\gamma=(x,y(x))$ read
 \begin{equation}\label{eqTCsemplici}
\left(f-\frac{\partial f}{\partial y'}\right) x^\Gamma + \frac{\partial f}{\partial y'} y^\Gamma =0,
\end{equation}
where $(x^\Gamma,y^\Gamma)$ is a vector tangent to $\partial E$.
\begin{proof}
Equation \REF{eqTCsemplici} can be obtained in few lines (see \cite{Brunt}). We propose an alternative way, which stresses the role of invariance of \REF{eqGasanteLagrangianeCheInteragiscono}. 
To this end, choose  a diffeomorphism between $E$ and the cylinder $[0,1]\times\R$, and denote by $\omega=gdx$ the pull-back of $fdx$ to such a cylinder. Then $\S:=[\omega]$ is an element of $\overline{H}^1(J^\infty(\pi),\pi_{\infty}^{-1}(\{0,1\}))$, where $\pi:[0,1]\times\R\to [0,1]$,
and 
 \begin{equation}\label{eqTCsuCilindro}
(\ddd_{\textrm{rel}}\S)_{\{0,1\}}=\left.\frac{\partial g}{\partial y'}\right|_{\pi_{\infty}^{-1}(\{0,1\})}.
\end{equation}
By pulling back \REF{eqTCsuCilindro} on $E$, one obtains \REF{eqTCsemplici}.
\end{proof}

\subsection*{Acknowledgments}
We belong to a  privileged community   which can play with theoretical nonsense  without caring about life,   thanks to the hard  work of many good people whose   existence is often forgotten---to them goes the author's deepest   gratitude. He is thankful for the indispensable hints and nudges coming from   L. Vitagliano, who first proposed Definition \ref{defSpazioDatiCauchy} for $k=\infty$, and M. B\"achtold, with whom he started the study of the   variational problems with free boundary which describe the  \virg{flight of dead vipers}, and for the friendly and stimulating environment of the Silesian University in Opava. It is his pleasure to thank to the the Grant Agency of the Czech Republic (GA \v CR)
for financial support under the project P201/12/G028.

 \def\cprime{$'$} \def\cprime{$'$} \def\cprime{$'$}


\begin{thebibliography}{10}

\bibitem{Sym}
A.~V. Bocharov, V.~N. Chetverikov, S.~V. Duzhin, N.~G. Khor{\cprime}kova, I.~S.
  Krasil{\cprime}shchik, A.~V. Samokhin, Y.~N. Torkhov, A.~M. Verbovetsky, and
  A.~M. Vinogradov.
\newblock {\em Symmetries and conservation laws for differential equations of
  mathematical physics}, volume 182 of {\em Translations of Mathematical
  Monographs}.
\newblock American Mathematical Society, Providence, RI, 1999.
\newblock Edited and with a preface by Krasil{\cprime}shchik and Vinogradov,
  Translated from the 1997 Russian original by Verbovetsky [A. M.
  Verbovetski{\u\i}] and Krasil{\cprime}shchik.

\bibitem{Bott}
R.~Bott and L.~W. Tu.
\newblock {\em Differential forms in algebraic topology}, volume~82 of {\em
  Graduate Texts in Mathematics}.
\newblock Springer-Verlag, New York, 1982.

\bibitem{Bryant}
R.~L. Bryant, S.~S. Chern, R.~B. Gardner, H.~L. Goldschmidt, and P.~A.
  Griffiths.
\newblock {\em Exterior differential systems}, volume~18 of {\em Mathematical
  Sciences Research Institute Publications}.
\newblock Springer-Verlag, New York, 1991.

\bibitem{Giaq}
M.~Giaquinta and S.~Hildebrandt.
\newblock {\em Calculus of variations. {I}}, volume 310 of {\em Grundlehren der
  Mathematischen Wissenschaften [Fundamental Principles of Mathematical
  Sciences]}.
\newblock Springer-Verlag, Berlin, 1996.
\newblock The Lagrangian formalism.

\bibitem{Kio}
J.~Kijowski.
\newblock A simple derivation of canonical structure and quasi-local
  {H}amiltonians in general relativity.
\newblock {\em Gen. Relativity Gravitation}, 29(3):307--343, 1997.

\bibitem{KrasVer}
J.~Krasil'shchik and A.~Verbovetsky.
\newblock Geometry of jet spaces and integrable systems.
\newblock {\em J. Geom. Phys.}, 61(9):1633--1674, 2011.

\bibitem{MR0394755}
D.~Krupka.
\newblock Of the structure of the {E}uler mapping.
\newblock {\em Arch. Math. (Brno)}, 10(1):55--61 (1975), 1974.

\bibitem{MR583436}
P.~W. Michor.
\newblock {\em Manifolds of differentiable mappings}, volume~3 of {\em Shiva
  Mathematics Series}.
\newblock Shiva Publishing Ltd., Nantwich, 1980.

\bibitem{Mor2010}
G.~Moreno.
\newblock A {$\mathcal{C}$}--spectral sequence associated with free boundary variational
  problems.
\newblock In {\em Geometry, integrability and quantization}, pages 146--156.
  Avangard Prima, Sofia, 2010.
  
  
  \bibitem{Mor2012}
G.~Moreno.
\newblock A Natural Geometric Framework for the Space of Initial Data of Nonlinear PDEs.
\newblock In {\em Geometry, integrability and quantization}, pages 245--257.
  Avangard Prima, Sofia, 2012.

\bibitem{Brunt}
B.~van Brunt.
\newblock {\em The calculus of variations}.
\newblock Universitext. Springer-Verlag, New York, 2004.

\bibitem{Vino73}
A.~M. Vinogradov.
\newblock Many-valued solutions, and a principle for the classification of
  nonlinear differential equations.
\newblock {\em Dokl. Akad. Nauk SSSR}, 210:11--14, 1973.

\bibitem{Vin1984}
A.~M. Vinogradov.
\newblock The {${\mathcal C}$}-spectral sequence, {L}agrangian formalism, and
  conservation laws. {I}. {T}he linear theory.  {II}. {T}he nonlinear theory. 
\newblock {\em J. Math. Anal. Appl.}, 100(1):1--129, 1984.


\bibitem{Vino87}
A.~M. Vinogradov.
\newblock Geometric singularities of solutions of nonlinear partial
  differential equations.
\newblock In {\em Differential geometry and its applications ({B}rno, 1986)},
  volume~27 of {\em Math. Appl. (East European Ser.)}, pages 359--379. Reidel,
  Dordrecht, 1987.

\bibitem{Vin2001}
A.~M. Vinogradov.
\newblock {\em Cohomological analysis of partial differential equations and
  secondary calculus}, volume 204 of {\em Translations of Mathematical
  Monographs}.
\newblock American Mathematical Society, Providence, RI, 2001.
\newblock Translated from the Russian manuscript by Joseph
  Krasil{\cprime}shchik.

\bibitem{Mor2007}
A.~M. Vinogradov and G.~Moreno.
\newblock Domains in infinite jet spaces: the {$\mathcal{C}$}-spectral
  sequence.
\newblock {\em Dokl. Akad. Nauk}, 413(2):154--157, 2007.

\bibitem{Luca}
L.~Vitagliano.
\newblock Secondary calculus and the covariant phase space.
\newblock {\em J. Geom. Phys.}, 59(4):426--447, 2009.


\bibitem{LucaPrivate}
L.~Vitagliano.
\newblock {Private communication}. 2010.

\end{thebibliography}
\end{document}